\numberwithin{equation}{section}
\newtheorem{theorem}{Theorem}[section]
\newtheorem{claim}[theorem]{Claim}
\newtheorem{lemma}[theorem]{Lemma}
\newtheorem{question}[theorem]{Question}
\theoremstyle{definition}
\newtheorem{definition}[theorem]{Definition}
\newtheorem{remark}[theorem]{Remark}
\newcommand{\<}{\langle}
\newcommand{\uh}{\upharpoonright}
\renewcommand{\>}{\rangle}
\newcommand{\low}{\operatorname{low}}
\newcommand{\dom}{\operatorname{dom}}
\newcommand{\RCA}{\operatorname{RCA}_0}
\newcommand{\ACA}{\operatorname{ACA}_0}
\newcommand{\WKL}{\operatorname{WKL}_0}
\newcommand{\RT}{\operatorname{RT}}
\newcommand{\COH}{\operatorname{COH}}
\newcommand{\SADS}{\operatorname{SADS}}
\newcommand{\RRT}{\operatorname{RRT}}
\begin{document}

\title{Cohesive Sets and Rainbows}

\keywords{Reverse mathematics; Ramsey Theorem; Rainbow Rainbow Theorem; Cohesive set; Weak K\"{o}nig Lemma.}
\subjclass[2010]{03B30, 03F35}

\author{Wei Wang}

\thanks{This research is partially supported by NSF Grant 11001281 of China, an NCET grant and SRF for ROCS from the Ministry of Education of China. The author thanks Damir Dzhafarov and Denis Hirschfeldt for sharing their insights on Jiayi Liu's theorem that $\RCA + \RT^2_2 \not\vdash \WKL$, and other logicians in Notre Dame, Madison and Chicago for their hospitality and inspiring conversations on related topics during his visit to U.S. in May 2011. He also thanks Andre Nies and the anonymous referee for their comments and suggestions on earlier drafts.}

\address{Institute of Logic and Cognition and Department of Philosophy, Sun Yat-sen University, 135 Xin'gang Xi Road, Guangzhou 510275, P.R. China}
\email{wwang.cn@gmail.com}

\begin{abstract}
We study the strength of $\RRT^3_2$, Rainbow Ramsey Theorem for colorings of triples, and prove that $\RCA + \RRT^3_2$ implies neither $\WKL$ nor $\RRT^4_2$. To this end, we establish some recursion theoretic properties of cohesive sets and rainbows for colorings of pairs. We show that every sequence ($2$-bounded coloring of pairs) admits a cohesive set (infinite rainbow) of non-PA Turing degree; and that every $\emptyset'$-recursive sequence ($2$-bounded coloring of pairs) admits a $\low_3$ cohesive set (infinite rainbow).
\end{abstract}

\ifx\isdraft\undefined
\else
    \today
\fi

\maketitle

\section{Introduction}

Rainbow Ramsey Theorems ($\RRT$ for short) are consequences of Ramsey's Theorems ($\RT$ for short). Recall that for $n \leq \omega$ and a set $X$, $[X]^n$ is the collection of $n$-element subsets of $X$, and colorings are functions. Ramsey Theorems state that for all finite $n$ and finite colorings $f: [\omega]^n \to k$ there exist infinite \emph{$f$-homogeneous} sets $H$, i.e., $f$ is constant on $[H]^n$. We denote the instance of Ramsey's Theorems for fixed $n$ (and $k$) by $\RT^n$ (and $\RT^n_k$ respectively). While Ramsey's Theorems talk about finite colorings, Rainbow Ramsey Theorems concern colorings which can only paint a limited number of tuples with one color. A coloring $f: [\omega]^n \to \omega$ is \emph{$k$-bounded} if $|f^{-1}(c)| \leq k$ for all $c$. $\RRT$ state that for all finite $n, k$ and all $k$-bounded colorings $f: [\omega]^n \to \omega$, there exist infinite \emph{$f$-rainbows} $R$, i.e., $f$ is injective on $[R]^n$. We denote the instance of $\RRT$ for fixed $n, k$ by $\RRT^n_k$. Using dual 
colorings, Galvin showed 
that $\RRT^n_k$ is an easy consequence of $\RT^n_k$ (see \cite{Csima.Mileti:2009.rainbow}).

Recall that every recursive $2$-coloring of $n$-tuples admits an infinite $\Pi^0_n$ homogeneous set, by Jockusch \cite{Jockusch:1972.Ramsey}. Combining Jockusch's result and Galvin's proof, Csima and Mileti \cite{Csima.Mileti:2009.rainbow} showed that every recursive $2$-bounded coloring of $n$-tuples admits an infinite $\Pi^0_n$ rainbow. On the other hand, for each $n$, Csima and Mileti \cite{Csima.Mileti:2009.rainbow} defined a recursive $2$-bounded coloring of $n$-tuples which admits no infinite $\Sigma^0_n$ rainbow. Thus $\RCA + \RT^2_2 \not\vdash \RRT^n_2$ for $n > 2$, as $\RT^2_2$ admits a model containing only $\Delta^0_3$ sets by Cholak, Jockusch and Slaman \cite{Cholak.Jockusch.ea:2001.Ramsey}. Comparing these to parallel results for $\RT$ by Jockusch \cite{Jockusch:1972.Ramsey}, we can find that $\RRT$ and $\RT$ are quite close, if we consider complexity of rainbows or homogeneous sets in terms of arithmetic hierarchy. However, if we take another viewpoint, some fragments of $\RRT$ turn out to be 
much weaker than their counterparts of $\RT$. Csima and Mileti \cite{Csima.Mileti:2009.rainbow} proved that for a fixed $2$-random $X$, we can find infinite rainbows recursive in $X$ for every recursive $2$-bounded coloring of pairs. Csima and Mileti then deduced many reverse mathematics consequences from the above recursion theoretic result, e.g., $\RRT^2_2$ is strictly weaker than $\RT^2_2$, and actually it does not imply many weak consequences of $\RT^2_2$ (like $\COH$, $\SADS$). More recently, the author \cite{Wang:RRT} proved that $\RRT^3_2$ is strictly weaker than $\ACA$. By a theorem of Jockusch \cite{Jockusch:1972.Ramsey}, we learn that $\RT^k_2$ is equivalent to $\ACA$ for every $k \geq 3$, over $\RCA$. So, $\RRT^3_2$ is strictly weaker than $\RT^3_2$.

Perhaps, the theorem of Jockusch mentioned above is a reason that Ramsey theory for colorings of triples or even longer tuples looks complicated. In reverse mathematics of Ramsey theory, much more effort has been invested on colorings of pairs, than on colorings of longer tuples. As a milestone, Seetapun \cite{Seetapun.Slaman:1995.Ramsey} proved that $\RT^2$ is strictly weaker than $\ACA$. His proof was later analyzed by Cholak, Jockusch and Slaman \cite{Cholak.Jockusch.ea:2001.Ramsey}. Since then, people have studied many consequences of $\RT^2$ and found that some of them are strictly weaker than $\RT^2$ (for examples, see \cite{Hirschfeldt.Shore:2007, Cholak.Giusto.ea:2005.freeset}). These and other related results have composed a complicated picture below $\RT^2$. However, $\RRT^3_2$ turns out to be the first theorem in Ramsey theory, which is strictly below $\ACA$ but not contained by the picture below $\RT^2$.

This historical background motivates our study of $\RRT^3_2$ in reverse mathematics. The main goal of this paper is to present some results in this direction. We show that $\RCA + \RRT^3_2 \not\vdash \RRT^4_2$. Moreover, as a further evidence of the weakness of $\RRT^3_2$, we prove that $\RCA + \RRT^3_2 \not\vdash \WKL$. These metamathematical results are presented in \S \ref{s:RRT32} as Theorems \ref{thm:RRT3-WKL} and \ref{thm:RRT3-RRT4}.

The proofs of the above results are similar to the proof of $\RCA + \RRT^3_2 \not\vdash \ACA$ (\cite{Wang:RRT}), in that colorings of triples are reduced to \emph{stable} colorings (stability is to be defined later), and stable colorings of triples are reduced to colorings of pairs. In the proof of Theorem \ref{thm:RRT3-RRT4}, we need to further reduce colorings of pairs to stable colorings. To accomplish these reductions, we follow the analysis of Cholak, Jockusch and Slaman in \cite{Cholak.Jockusch.ea:2001.Ramsey}, and use \emph{cohesive} sets. Recall that, for a sequence $\vec{R} = (R_n: n \in \omega)$ of sets, an \emph{$\vec{R}$-cohesive} set is an infinite set $C$ such that either $C - R_n$ or $C \cap R_n$ is finite for each $n$. As one may expect, complexity increases when we pass from colorings of triples to colorings of pairs. So we need technical theorems which give us cohesive sets and rainbows for sequences and colorings of high complexity. And the resulting cohesive sets and rainbows turn out to 
be available at low price (low complexity).

We present technical theorems concerning cohesive sets in \S \ref{s:coh}. The main result in \S \ref{s:coh} is that every $\emptyset'$-recursive sequence of sets admits a $\low_3$ cohesive set (Theorem \ref{thm:coh-double-jump}). We also include a result that every sequence of sets admits a cohesive set of non-PA degree (Theorem \ref{thm:non-PA-Coh}). As cohesiveness has played a remarkable role in reverse mathematics of Ramsey Theory (e.g., see \cite{Cholak.Jockusch.ea:2001.Ramsey, Hirschfeldt.Shore:2007}), the results in \S \ref{s:coh} may have independent interest.

In \S \ref{s:RRT22}, we present two theorems concerning rainbows for colorings of pairs: that every $2$-bounded coloring of pairs admits an infinite rainbow of non-PA degree (Theorem \ref{thm:rb-nPA}); and that every $\emptyset'$-recursive $2$-bounded coloring of pairs admits an infinite $\low_3$ rainbow (Theorem \ref{thm:rb-double-jump}). These parallel the results concerning cohesive sets. The proof of Theorem \ref{thm:rb-nPA} is inspired by an ingenious recent work of Jiayi Liu \cite{Liu:2012} that $\RCA + \RT^2_2 \not\vdash \WKL$.

In \S6, we conclude this paper by raising some related questions.

\section{Preliminaries}\label{s:pre}

Much of the notation in this paper follows standard references in the area, for example, Soare's book \cite{Soare:1987.book} for recursion theory, Simpson's book \cite{Simpson:1999.SOSOA} for reverse mathematics, and Nies' book \cite{Nies:2010.book} for algorithmic randomness. But we need some convenient shorthand which is introduced in this section. We also recall some repeatedly used terms and notation and some known results playing important roles.

\subsection{Sequences and sets}

For $n \leq \omega$ and a set $X$, $[X]^{<n} = \bigcup_{k < n} [X]^k$ and $[X]^{\leq n} = \bigcup_{k \leq n} [X]^k$. We use lower case Greek letters for elements of $[\omega]^{<\omega}$. If $x \in \omega$, then $\<x\> = \{x\}$. Elements of $[\omega]^{\leq \omega}$ are also identified as strictly increasing sequences. We fix a recursive bijection $\ulcorner \cdot \urcorner: [\omega]^{<\omega} \to \omega$ such that
$$
    \forall i < n(x_i \leq y_i) \to \ulcorner \<x_i: i < n\> \urcorner \leq \ulcorner \<y_i: i < n\> \urcorner.
$$
For $\sigma, \tau \in [\omega]^{<\omega}$ of same length, we write $\sigma \leq \tau$ if $\ulcorner \sigma \urcorner \leq \ulcorner \tau \urcorner$. When we select a least $\sigma$ with some property, we do it with respect to the above ordering. If $S \subseteq [\omega]^{<\omega}$ then let $\bar{S} = \bigcup_{\sigma \in S} \sigma$.

For a non-empty finite sequence $s$, let $s^-$ be the initial segment of $s$ of length $|s|-1$. If $s$ and $t$ are two finite sequences, then we write $st$ for the concatenation of $s$ and $t$, i.e., $st$ is the sequence $u$ such that $|u| = |s| + |t|$, $u(i) = s(i)$ for $i < |s|$ and $u(|s| + j) = t(j)$ for $j < |t|$. Similarly, we write $\sigma\tau$ for the concatenation of $\sigma$ and $\tau$ in $[\omega]^{<\omega}$, but we additionally require that $\max \sigma < \min \tau$. If $s$ is a sequence and $n \leq |s|$, then $s \uh n$ is the initial segment of $s$ of length $n$. We write $s \prec t$ if $s$ is a proper initial segment of $t$, and $s \preccurlyeq t$ if either $s = t$ or $s \prec t$. Note that, when we work with $[\omega]^{<\omega}$, $\prec$ is not to be confused with $\subset$. If we write $\sigma \subseteq \tau$ for $\sigma,\tau \in [\omega]^{<\omega}$, then we regard $\sigma$ and $\tau$ as finite sets.

A \emph{tree} $T$ is a set of sequences such that
$$
    s \prec t \in T \to s \in T.
$$
The \emph{height} of a tree $T$ is defined to be
$$
    ht(T) = \sup \{|s| + 1 : s \in T\}.
$$
Let $[T]$ be the set of infinite sequences $X$ such that $X \uh n \in T$ for all $n$, and let $\widehat{T} = \{s \in T: \forall t \in T(s \not\prec t)\}$ (i.e., the set of leaves of $T$). If $s \in T$ then
$$
    T(s) = \{t: st \in T\}.
$$

For a set $X$, we write $X = \bigsqcup_{i \in I} X_i$ if $(X_i: i \in I)$ is a partition of $X$, i.e., if $X = \bigcup_{i \in I} X_i$ and $X_i \cap X_j = \emptyset$ for distinct $i$ and $j$ in $I$.

\subsection{Computations}

For $\tau \in [\omega]^{<\omega}$, we write $\Phi_e(\tau; x) \downarrow$ if $x < |\tau|$ and $\Phi_e(\tau; x)$ converges in $|\tau|$ many stages. If $\Phi_e(\tau; x) \downarrow = b$, then we always assume that $b < 2$ and $\Phi_e(\tau; y) \downarrow$ for all $y < x$. Suppose that $\vec{e} = (e_i: i < n)$ and $\vec{x} = (x_i: i < n)$ are two tuples of same length and $X \in [\omega]^{\leq \omega}$, we write $\Phi_{\vec{e}}(X; \vec{x}) \downarrow$ if $\Phi_{e_i}(X; x_i) \downarrow$ for some $i < n$. We write $\Phi_{\vec{e}}(X; \vec{x}) \uparrow$ if $\Phi_{\vec{e}}(X; \vec{x}) \downarrow$ fails. On the other hand, we write $\Phi^*_{\vec{e}}(X; y) \downarrow$ if $\Phi_{e_i}(X; y) \downarrow$ for all $i < n$.

A function $f: \omega \to 2$ is \emph{PA for $Y$} where $Y \subseteq \omega$, if $f(e) \neq \Phi_e(Y;e)$ whenever $\Phi_e(Y;e) \downarrow$; $f$ is \emph{PA} if it is PA for $\emptyset$; $X \subseteq \omega$ is of \emph{PA over $Y$} (denoted by $X \gg Y$), if it computes a function which is PA for $Y$; $X$ is of \emph{PA (Turing) degree} if $X \gg \emptyset$, otherwise it is of \emph{non-PA degree}. Note that if $X \gg Y$ then $Y \leq_T X$.

Recall that a set $X$ is $\low_n$ ($n > 0$), if $X^{(n)} \equiv_T \emptyset^{(n)}$. If $X$ is $\low$ (i.e., $\low_1$), then a \emph{lowness index} of $X$ is an $e \in \omega$ such that $X' = \Phi_e(\emptyset')$.

\subsection{Known results}

Recall that a tree $T \subseteq \omega^{<\omega}$ is \emph{$X$-recursively bounded}, if there exists an $X$-recursive function $f: \omega \to \omega$ such that $s(i) < f(i)$ for all $s \in T$ and $i < |s|$.

\begin{theorem}[Low Basis Theorem, \cite{Jockusch.Soare:1972.TAMS}]\label{thm:low-basis}
Every $X$-recursive and $X$-recursively bounded infinite tree $T$ contains a path $P \in [T]$ such that $X \oplus P$ is low over $X$, i.e., $(X \oplus P)' \equiv_T X'$.
\end{theorem}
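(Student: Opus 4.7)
The plan is to build the path $P$ by a $\Pi^X_1$ tree‐shrinking construction in which each requirement ``decide whether $\Phi_e^{X\oplus P}(e)\downarrow$'' is handled by either shrinking $T_e$ to an infinite subtree that forces divergence, or observing that no such subtree exists so that every path already forces convergence. The whole list of shrinking decisions will be computable from $X'$, and the final $P$ will be obtained as a path through the intersection of the nested sequence of trees. Since the jump $(X\oplus P)'$ is then reduced to reading off these decisions, $(X\oplus P)'\equiv_T X'$.

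In more detail, first I would set $T_0=T$ and, given an infinite $X$-recursive, $X$-recursively bounded tree $T_e$, define
\[
    U_e = \{\sigma\in T_e : \Phi_e^{X\oplus \sigma}(e)\uparrow \text{ within $|\sigma|$ stages}\}.
\]
Then $U_e$ is $X$-recursive and bounded by the same function as $T$. Set $T_{e+1}=U_e$ if $U_e$ is infinite, and $T_{e+1}=T_e$ otherwise. The first step is to verify that in either case $T_{e+1}$ is infinite, $X$-recursive, and $X$-recursively bounded, and that an index for $T_{e+1}$ can be computed from an index for $T_e$ together with the single bit telling which case holds.

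The second step is to observe that ``$U_e$ is infinite'' is a $\Pi^X_1$ predicate of $e$: because $U_e$ is $X$-recursive and recursively bounded, it is infinite iff it contains arbitrarily long sequences, which is $\Pi^X_1$. Hence $X'$ can uniformly produce the sequence of case-bits and thus uniformly compute indices for the trees $(T_e)_{e\in\omega}$. By compactness (König's lemma applied to the bounded recursive tree $\bigcap_e T_e$, which is nonempty since every $T_e$ is infinite) there exists $P\in\bigcap_e [T_e]$.

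The third and final step is to verify $(X\oplus P)'\leq_T X'$: for each $e$, $X'$ looks at the case-bit used at stage $e$. If $U_e$ was finite then every path through $T_e$, and in particular $P$, satisfies $\Phi_e^{X\oplus P}(e)\downarrow$; if $U_e$ was infinite then $P\in[T_{e+1}]\subseteq[U_e]$ forces $\Phi_e^{X\oplus P}(e)\uparrow$. The reverse inequality $X'\leq_T(X\oplus P)'$ is trivial. The only delicate point, and the place where one must be careful, is the uniformity in the inductive step: one must maintain an $X'$-computable enumeration of indices for $T_e$ so that the overall reduction of $(X\oplus P)'$ to $X'$ is indeed Turing and not merely pointwise; this is exactly what the $\Pi^X_1$ character of ``infinite'' for $X$-recursive recursively bounded trees provides.
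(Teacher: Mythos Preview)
The paper does not give its own proof of this statement: it is quoted in \S\ref{s:pre} under ``Known results'' with a citation to Jockusch--Soare, so there is nothing in the paper to compare against. Your argument is the classical Jockusch--Soare forcing with $\Pi^0_1$ classes, and it is essentially correct. Two small points worth tightening: (i) when you invoke compactness, what you want is that each $[T_e]$ is a nonempty closed subset of the compact space $[T]$ (this uses the $X$-recursive bound), so $\bigcap_e[T_e]\neq\emptyset$; the phrasing ``the bounded recursive tree $\bigcap_e T_e$'' is slightly off since that intersection is not $X$-recursive, though it is a tree and $[\bigcap_e T_e]=\bigcap_e[T_e]$; (ii) you should remark that $U_e$ is downward closed under $\prec$, which follows from the standard use convention that a computation halting in $s$ steps queries only oracle bits below $s$. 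With these cosmetic fixes the proof is complete.
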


We also need the following theorem of Liu and its ingenious proof.

\begin{theorem}[Liu \cite{Liu:2012}]\label{thm:Liu-partition}
For every finite partition $f: \omega \to k$, there exist $i < k$ and $X \in [f^{-1}(i)]^\omega$ such that $X$ is of non-PA degree.
\end{theorem}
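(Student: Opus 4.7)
The plan is to prove Theorem~\ref{thm:Liu-partition} by a Mathias-style forcing construction that treats all $k$ color classes at once. Conditions take the form $((F_i)_{i<k}, R)$ with each $F_i \subseteq f^{-1}(i)$ finite, $R \subseteq \omega$ infinite, and $\max \bigcup_i F_i < \min R$; an extension refines each $F_i$ by finitely many elements of $R \cap f^{-1}(i)$ and replaces $R$ by a cofinite subset. A sufficiently generic filter yields candidate sets $X_0, \ldots, X_{k-1}$ with $X_i \subseteq f^{-1}(i)$, and since $R$ is always infinite in every condition, at least one $X_i$ must be infinite.

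The main work is to diagonalize against $\Phi_e$ producing a PA function. Fix a condition $((F_i)_{i<k}, R)$ and an index $e$. For each $i < k$, consider the set of binary strings $\tau$ obtainable as an initial segment of $\Phi_e(F_i \cup H)$ for some finite $H \subseteq R \cap f^{-1}(i)$ and additionally lying on the $\Pi^0_1$ tree of $\{0,1\}$-valued DNR functions. If for some $i$ this set fails to be an infinite tree, then a bounded search along color $i$ forces $\Phi_e(X_i)$ off the DNR tree, or alternatively $R$ can be shrunk so that $X_i$ remains finite; either way the requirement ``$\Phi_e(X_i)$ is a PA completion'' is defeated for this color, and the remaining colors can still grow.

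The delicate case is when, for every $i$, the associated tree is infinite, giving a $\{0,1\}$-valued DNR branch $P_i$. The tuples $(P_0, \ldots, P_{k-1})$ form a $\Pi^0_1$ class, and Theorem~\ref{thm:low-basis} supplies a low representative. Liu's key idea is to amalgamate this low tuple with the recursive partition $f$: one uniformly decides, for each input $n$, a value of a $\{0,1\}$-valued DNR function by examining what the low tuple says on colors actually appearing in $R$ and letting the partition force consistency. Such a procedure would produce a recursive PA completion, contradicting the non-existence of a recursive element in the DNR tree. Hence the delicate case cannot arise, the diagonalization succeeds at every step, and iterating through all pairs $(i,e)$ yields a generic in which every infinite $X_i$ has non-PA degree; since at least one $X_i$ is infinite, the theorem follows.

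The main obstacle I anticipate is the combinatorial amalgamation in the delicate case, where one must exploit the partition together with the shared reservoir $R$ to collapse $k$ PA candidates into a single recursive one without committing to a particular color class in advance. This is precisely the step where Liu's recent argument \cite{Liu:2012} breaks new ground, going beyond the standard Mathias-forcing diagonalizations of the type used in \cite{Cholak.Jockusch.ea:2001.Ramsey}, and it is the reason the proof is substantially more subtle than the corresponding homogeneity-based arguments for $\RT^2_2$.
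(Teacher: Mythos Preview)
The paper does not give its own proof of this theorem; it is quoted from \cite{Liu:2012} and used as a black box (for instance in the lemma preceding Theorem~\ref{thm:non-PA-Coh} and in Lemma~\ref{lem:rb-nPA-ext}). So there is no proof in the paper to compare your sketch against.

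On its own merits, your outline has the right global shape---multiple finite stems $F_i\subseteq f^{-1}(i)$ sharing a common reservoir $R$, diagonalize against each $\Phi_e$---but the treatment of the ``delicate case'' does not work. Two specific problems. First, the theorem does \emph{not} assume $f$ is recursive, yet your argument speaks of ``the recursive partition $f$'' and aims to manufacture a \emph{recursive} PA completion; for arbitrary $f$ that target is simply wrong, and the contradiction you seek (``no recursive element in the DNR tree'') is not available. Second, even granting recursive $f$, the mechanism you propose does not go through: applying Theorem~\ref{thm:low-basis} to the $\Pi^0_1$ class of tuples $(P_0,\ldots,P_{k-1})$ gives a \emph{low} tuple, and there is no way to extract a recursive DNR function from a low tuple of DNR functions together with a recursive partition---lowness is not recursiveness, and the partition gives you no leverage to bridge the gap.

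Liu's actual argument does not invoke the Low Basis Theorem at this step at all. His conditions carry, in addition to the stems, a finite partition of the reservoir that is allowed to be refined as the construction proceeds; the key combinatorial lemma is that if the easy diagonalization (finding an extension with $\Phi_e(\tau;x)\downarrow=\varphi_x(x)\downarrow$) fails across all pieces, then one can locate a piece of the refined reservoir on which divergence at some $x$ is outright forced. The contradiction in the bad subcase is obtained from purely finitary combinatorics of how convergent computations distribute over the pieces, not by producing a recursive object from a low one. Your final paragraph correctly flags this step as the crux and as the place where \cite{Liu:2012} goes beyond \cite{Cholak.Jockusch.ea:2001.Ramsey}, but the mechanism you sketch is not the one that makes it work.
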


\section{Cohesive Sets}\label{s:coh}

This section contains two results concerning cohesive sets. We consider sequences which are either $\emptyset'$-recursive or of arbitrary complexity. In \S \ref{ss:coh-nonPA}, we show that every sequence of sets admits a cohesive set of non-PA degree, regardless of the complexity of the given sequence. The proof uses Mathias forcing and a theorem of Jiayi Liu. As all technical theorems in this and next sections use variants of Mathias forcing, \S \ref{ss:coh-nonPA} can be treated as a warm-up. In \S \ref{ss:coh-double-jump}, we show that every $\emptyset'$-recursive sequence admits a $\low_3$ cohesive set, using a slightly more complicated variant of Mathias forcing.

For $\vec{R} = (R_n: n \in \omega)$ and $\nu \in 2^{<\omega}$, let
$$
    R_\nu = \bigcap_{\nu(i) = 0} R_i \cap \bigcap_{\nu(i) = 1} (\omega - R_i).
$$

\subsection{Cohesive sets of non-PA degrees}\label{ss:coh-nonPA}

In this subsection, we prove the following theorem, which is a simple generalization of Liu's Theorem \ref{thm:Liu-partition}. The reader should notice that there is no complexity condition on the sequence in the theorem below.

\begin{theorem}\label{thm:non-PA-Coh}
Every $\vec{R} = (R_n: n \in \omega)$ admits a non-PA cohesive set.
\end{theorem}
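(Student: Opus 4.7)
Plan: I would build $C$ via Mathias-style forcing, combining the standard construction of a cohesive set with Liu's Theorem \ref{thm:Liu-partition} to secure non-PA degree. A condition is a pair $(\sigma, X)$ with $\sigma \in [\omega]^{<\omega}$, $X \subseteq \omega$ infinite, and $\max\sigma < \min X$; extension $(\tau, Y) \leq (\sigma, X)$ requires $\sigma \preccurlyeq \tau$, $\tau - \sigma \subseteq X$, and $Y \subseteq X$. Running a descending sequence $(\sigma_n, X_n)$ of conditions I would produce $C = \bigcup_n \sigma_n$. Cohesiveness is secured by the easy density: at each stage the infinite piece of the split $X_n = (X_n \cap R_n) \sqcup (X_n - R_n)$ yields an extension refining the reservoir into some $R_\nu$ with $|\nu|$ increasing without bound.

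The substantive requirement, for each $e$, is that $\Phi_e(C;\cdot)$ is not a PA function. Given a condition $(\sigma, X)$ and index $e$, I would split into two cases. In the \emph{splittable} case, some finite $\tau \subseteq X$ with $\max\sigma < \min\tau$, and some $i$, $b$, satisfy $\Phi_i(i)\downarrow = b$ and $\Phi_e(\sigma\tau;i)\downarrow = b$; the extension $(\sigma\tau, X \cap (\max\tau,\infty))$ then forces $\Phi_e(C;\cdot)$ not to be PA. Otherwise, I would apply Theorem \ref{thm:Liu-partition} to a finite partition of $X$ that records, for each $x \in X$, the combinatorial type of how finite $\tau \ni x$ drive the convergence of $\Phi_e(\sigma\tau;\cdot)$; Liu's theorem then yields an infinite $Y \subseteq X$ of non-PA degree homogeneous for that partition. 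The condition $(\sigma, Y)$ forces any total $\Phi_e(C;\cdot)$ produced from generics $C \subseteq \sigma \cup Y$ to be already computable from the non-PA set $Y$, hence not PA.

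Interleaving the cohesiveness refinements with these diagonalizations through a standard bookkeeping gives a cohesive $C$ of non-PA degree. The main obstacle is the non-splittable case: formulating the right partition and verifying that homogeneity of $Y$ genuinely rules out $\Phi_e(C;\cdot)$ being PA, rather than merely that $Y$ itself has non-PA degree. This is where the essential content of Liu's argument in Theorem \ref{thm:Liu-partition} enters, and the adaptation must keep the partition computable enough for Liu to apply while remaining compatible with subsequent cohesive refinements — a tension that I would resolve by carefully ordering the requirements along $n$.
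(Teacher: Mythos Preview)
Your plan has a genuine gap in the non-splittable case, and it stems from placing Liu's theorem in the wrong step of the construction. The ``finite partition of $X$ recording combinatorial types'' is not well-defined: for a given $x \in X$ there are infinitely many finite $\tau \ni x$ and infinitely many inputs $i$, so there is no natural finite-range invariant to partition by. Even granting some homogeneous non-PA $Y$, your conclusion that $\Phi_e(C;\cdot) \leq_T Y$ for every generic $C \in B(\sigma,Y)$ does not follow --- $C$ is determined by all later stages, and homogeneity of $Y$ for a partition of single elements says nothing about the behaviour of $\Phi_e$ on arbitrary infinite subsets of $\sigma \cup Y$. You correctly flag this as the main obstacle, but it is not a tension to be resolved by bookkeeping; the approach as stated does not work.

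The paper's argument reverses the roles you assign. The invariant maintained throughout is that the reservoir $X_n$ is itself of non-PA degree. Liu's Theorem~\ref{thm:Liu-partition} is invoked at the \emph{cohesiveness} step: given $X_n \not\gg \emptyset$, the two-piece partition $X_n = (X_n \cap R_n) \sqcup (X_n \setminus R_n)$ has, by Liu, an infinite non-PA subset on one side, and that becomes $X_{n+1}$. The diagonalization step then needs no partition at all: if your splittable case fails, one argues directly that there must exist some $x$ with $(\sigma,X) \Vdash \Phi_e(G;x)\uparrow$. For otherwise, for every $x$ one could $X$-recursively search for a $\tau_x \subseteq \sigma \cup X$ with $\Phi_e(\tau_x;x)\downarrow$; since the splittable case fails, $\Phi_e(\tau_x;x) \neq \varphi_x(x)$ whenever the latter converges, so $x \mapsto \Phi_e(\tau_x;x)$ is an $X$-recursive PA function, contradicting $X \not\gg \emptyset$. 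Thus the non-splittable case forces partiality at a single input, with the new reservoir a cofinite (hence still non-PA) subset of $X$.
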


We prove the above theorem by Mathias forcing.

\begin{definition}
A \emph{Mathias condition} is a pair $(\sigma, X) \in [\omega]^{<\omega} \times [\omega]^\omega$ such that $\max \sigma < \min X$. If
$(\sigma, X)$ is a Mathias condition, then define $$
    B(\sigma, X) = \{Y \in [\omega]^\omega: \sigma \subset Y \subseteq \sigma \cup X\}.
$$ If $(\sigma, X)$ and $(\tau, Y)$ are two Mathias conditions, then $(\sigma, X) \leq_M (\tau, Y)$ if and only if $B(\sigma, X) \subseteq B(\tau, Y)$.

Suppose that $\varphi(G)$ is a $\Pi^0_1$ formula with an additional unary predicate $G$ and $p = (\sigma,X)$ is a Mathias condition. We write $p \Vdash \varphi(G)$ if $\varphi(Y)$ for all $Y \in B(\sigma,X)$.
\end{definition}

\begin{lemma}
If $(\sigma, X)$ is a Mathias condition such that $X \not\gg \emptyset$, then for every $e$ there exist $x$ and a Mathias condition $(\tau, Y) \leq_M (\sigma, X)$, such that $Y \not\gg \emptyset$ and
$$
  \text{either } (\tau, Y) \Vdash \Phi_e(G;x) \uparrow \text{ or } \Phi_e(\tau; x) \downarrow = \varphi_x(x) \downarrow.
$$
\end{lemma}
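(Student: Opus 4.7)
The plan is a one-step diagonalization exploiting that $X \not\gg \emptyset$. First I would try to leave the stem alone: if there exists $x$ with $(\sigma,X) \Vdash \Phi_e(G;x)\uparrow$, then $(\tau,Y)=(\sigma,X)$ witnesses the first disjunct, and $Y=X\not\gg\emptyset$ by hypothesis. So assume no such $x$ exists; this is where the argument begins.

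Under this assumption, for every $x$ the condition $(\sigma,X)$ fails to force divergence at $x$, so by continuity of computation there is some finite $\tau$ with $\sigma\subset\tau$, $\tau\setminus\sigma\subseteq X$, and $\Phi_e(\tau;x)\downarrow$. Using $X$ as an oracle one can enumerate such extensions and pick the least one, call it $\tau_x$; the map $x\mapsto\tau_x$ is then $X$-computable. Define $f:\omega\to 2$ by $f(x)=\Phi_e(\tau_x;x)$; by the convention in \S\ref{s:pre} that any converging $\Phi_e$-value lies in $\{0,1\}$, this is well-defined, and $f\leq_T X$.

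Now I would close the argument by diagonalization. Suppose for contradiction that the second disjunct also fails at every $x$. In particular, for each $x$ take $\tau=\tau_x$ and the cofinite tail $Y=\{y\in X: y>\max\tau_x\}$; then $(\tau_x,Y)$ is a Mathias condition with $(\tau_x,Y)\leq_M(\sigma,X)$, and since $Y\equiv_T X$ we have $Y\not\gg\emptyset$. By assumption this candidate fails to witness the second disjunct, so $\Phi_e(\tau_x;x)\neq\varphi_x(x)$ whenever $\varphi_x(x)\downarrow$. But then $f(x)\neq\varphi_x(x)$ whenever $\varphi_x(x)\downarrow$, making $f$ a PA function computable from $X$, contradicting $X\not\gg\emptyset$. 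Hence some $x$ does satisfy $\Phi_e(\tau_x;x)\downarrow=\varphi_x(x)\downarrow$, and $(\tau_x,Y)$ is the desired extension.

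No serious obstacle is anticipated; the only points to check carefully are that $x\mapsto\tau_x$ is genuinely $X$-recursive (routine from the definition of $\Vdash$ together with compactness of the underlying computation), and that truncating $X$ above $\max\tau_x$ preserves both the Mathias condition structure and the non-PA property — the latter because $Y$ differs from $X$ in only finitely many elements and so $Y\equiv_T X$.
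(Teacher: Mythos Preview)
Your proof is correct and follows essentially the same approach as the paper's: both hinge on the observation that if neither disjunct can be realized, then $x\mapsto\Phi_e(\tau_x;x)$ is an $X$-recursive PA function, contradicting $X\not\gg\emptyset$. The only cosmetic difference is the order of the case split---the paper first checks whether some $\tau,x$ realizes $\Phi_e(\tau;x)\downarrow=\varphi_x(x)\downarrow$ and otherwise argues that $(\sigma,X)$ forces divergence somewhere, whereas you first dispose of the forcing-divergence case and then hunt for a suitable $\tau_x$.
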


\begin{proof}
There are two cases to consider.

\emph{Case 1:} There exist $x$ and $\tau$ such that $\sigma \subseteq \tau \subseteq \sigma \cup X$ and $\Phi_e(\tau; x) \downarrow = \varphi_x(x) \downarrow$. Pick some such $\tau$ and $x$ and let $Y = X \cap (\max \tau, \infty)$.

\emph{Case 2:} Otherwise. We claim that there exists $x$ with $(\sigma, X) \Vdash \Phi_e(G; x) \uparrow$. If not, then for every $x$ we can $X$-recursively find some $\tau_x$ such that $\sigma \subseteq \tau_x \subset \sigma \cup X$ and $\Phi_e(\tau_x; x) \downarrow$. As Case 1 fails, if $\varphi_x(x) \downarrow = i < 2$ then $\Phi_e(\tau_x; x) \downarrow = 1 - i$. So, we can define an $X$-recursive function $f: x \mapsto \Phi_e(\tau_x; x)$, witnessing that $X \gg \emptyset$. This gives us a desired contradiction and proves the claim. Now let $(\tau, Y) = (\sigma, X)$.

In either case, $Y \not\gg \emptyset$ and $(\tau,Y)$ is a desired condition.
\end{proof}

\begin{proof}[Proof of Theorem \ref{thm:non-PA-Coh}]
With the above lemma and Liu's Theorem \ref{thm:Liu-partition}, we can find a sequence of Mathias conditions $((\sigma_n, X_n): n \in \omega)$ such that
\begin{enumerate}
    \item $(\sigma_0, X_0) = (\emptyset, \omega)$,
    \item $(\sigma_{n+1}, X_{n+1}) \leq_M (\sigma_n, X_n)$ and $X_{n} \not\gg \emptyset$,
    \item for some $x$ either $(\sigma_{n+1}, X_{n+1}) \Vdash \Phi_n(G;x) \uparrow$ or $\Phi_n(\sigma_{n+1}; x) \downarrow = \varphi_x(x) \downarrow$,
    \item either $X_{n+1} \subseteq R_n$ or $X_{n+1} \subseteq \omega - R_n$.
\end{enumerate}
To see that $G = \bigcup_n \sigma_n$ is infinite, for each $k$ consider $e(k)$ such that
$$
    |X| > k \to \Phi_{e(k)}(X; x) \downarrow = 0.
$$
Clearly, $|\sigma_{e(k)+1}| > k$. It follows that $G$ is $\vec{R}$-cohesive and $G \not\gg \emptyset$.
\end{proof}

\subsection{Cohesive sets with humble double jumps}\label{ss:coh-double-jump}

In this subsection, we prove the following Theorem \ref{thm:coh-double-jump}, which in some sense is an extension of \cite[Theorem 3.6]{Cholak.Jockusch.ea:2001.Ramsey} that every $\emptyset'$-recursive finite partition of $\omega$ admits an infinite $\low_2$ homogeneous set. This theorem is the first in a series of theorems which eventually leads to Theorem \ref{thm:RRT3-RRT4} that $\RCA + \RRT^3_2 \not\vdash \RRT^4_2$, and plays an important role later in our proof of the second theorem (Theorem \ref{thm:rb-double-jump}) (see Remark \ref{rmk:rb-dj}).

\begin{theorem}\label{thm:coh-double-jump}
If $P \gg \emptyset''$, then every $\emptyset'$-recursive $\vec{R} = (R_n: n \in \omega)$ admits an infinite cohesive $X$ with $X'' \leq_T X \oplus \emptyset'' \leq_T P$.
\end{theorem}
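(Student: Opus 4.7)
My plan is to construct $X$ via a variant of Mathias forcing that packages both the choice of cohesive direction and the forcing of $\Sigma^0_1$-sentences about $G$ into a single $\emptyset''$-recursive, infinite, finitely branching tree $\widehat T$, and to use the PA-over-$\emptyset''$ strength of $P$ to compute a path through it.

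For the cohesive direction, I first consider the auxiliary binary tree
\[
  T^\ast = \{\nu \in 2^{<\omega} : |R_\nu| \geq |\nu|\}.
\]
Since $\vec R$ is $\emptyset'$-recursive, membership in $T^\ast$ is $\Sigma^0_1(\emptyset')$, hence $\emptyset''$-decidable. A pigeonhole at each length $n$ (some $\nu \in 2^n$ has $R_\nu$ infinite) shows $T^\ast$ is infinite, and any $\xi \in [T^\ast]$ has $R_{\xi \uh n}$ infinite for every $n$ because the sets $R_{\xi \uh m}$ for $m \geq n$ are nested subsets of $R_{\xi \uh n}$ with $|R_{\xi \uh m}| \geq m$.

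I then enlarge $T^\ast$ to $\widehat T$ by coding Mathias forcing stages. At level $e$, a node is a triple $(\nu, d, \sigma)$ with $\nu \in T^\ast$, $d \in 2^e$ a forcing record for $\varphi_0,\dots,\varphi_{e-1}$, and $\sigma \in [\omega]^{<\omega}$ with $\sigma \subseteq R_\nu$, subject to a growth condition on $|\nu|$ that keeps $R_\nu$ well supplied above $\max \sigma$. A child $(\nu', d^\frown b, \sigma')$ requires $\nu' \succ \nu$ in $T^\ast$ and fixes $b = 1$ if some $\tau$ satisfies $\sigma \subsetneq \tau \subseteq \sigma \cup R_{\nu'}$ and $\Phi_e(\tau) \downarrow$ (a $\Sigma^0_1(\emptyset')$ condition), $b = 0$ otherwise; then $\sigma'$ is extended to include the canonically least such $\tau$, or the canonically least new element of $R_{\nu'}$, accordingly. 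The consistency conditions are all $\Sigma^0_2$, so $\widehat T$ is $\emptyset''$-recursive, and it is infinite because $T^\ast$ is. Hence $[\widehat T]$ is a nonempty $\Pi^0_1(\emptyset'')$ class, and $P \gg \emptyset''$ computes a path producing $(\xi, d, X)$ with $X = \bigcup_e \sigma_e \leq_T P$.

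To conclude, $X$ is $\vec R$-cohesive since $X \subseteq^\ast R_{\xi \uh n}$ for every $n$, and $X \oplus \emptyset'' \leq_T P$ is immediate. For $X'' \leq_T X \oplus \emptyset''$, I verify $X' \leq_T X \oplus \emptyset'$: positive forcing at stage $e$ deposits a witness for $\varphi_e(G)$ inside an initial segment of $X$, while negative forcing guarantees that every tail of $X$ lies in $R_{\nu_{e+1}}$ where no witness exists; so $e \in X'$ iff some initial segment of $X$ already witnesses $\Phi_e(\cdot) \downarrow$, which $X \oplus \emptyset'$ can decide by searching up to a bound given by the stage-$e$ endpoint. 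One further jump yields $X'' \leq_T (X \oplus \emptyset')' \leq_T X \oplus \emptyset''$. The main obstacle I foresee is the careful bookkeeping required to keep $\widehat T$ genuinely infinite (so that both branches can always be extended with $R_{\nu'}$ large enough above $\max \sigma$) and to make the stage endpoints $\emptyset'$-computable from $X$, so that the forcing record $d$ is truly $(X \oplus \emptyset')$-computable, not merely $P$-computable.
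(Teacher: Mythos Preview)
Your approach has a genuine gap at the very last step. You force only $\Sigma^0_1$ facts about $G$ (whether $\Phi_e(G)\downarrow$), which at best yields $X' \leq_T X \oplus \emptyset'$, i.e.\ $X \in \GL_1$. You then write ``one further jump yields $X'' \leq_T (X \oplus \emptyset')' \leq_T X \oplus \emptyset''$.'' The second inequality is not a general fact. Indeed, for any $X$ one has $X \oplus \emptyset' \leq_T X'$, so $(X \oplus \emptyset')' \leq_T X''$; and under your hypothesis $X' \leq_T X \oplus \emptyset'$ one gets $X'' \leq_T (X \oplus \emptyset')'$, hence $(X \oplus \emptyset')' \equiv_T X''$. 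Thus the inequality $(X \oplus \emptyset')' \leq_T X \oplus \emptyset''$ is \emph{literally} the statement $X'' \leq_T X \oplus \emptyset''$ you are trying to prove, so the argument is circular. In classical terms, $\GL_1$ does not imply $\GL_2$; there are $1$-generic (hence $\GL_1$) sets which are not $\GL_2$.

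The paper's proof avoids this by forcing at the correct complexity level: it decides, for each $e$, the $\Pi^0_2$ sentence ``$\Phi_e(G)$ is total'' directly, using the Cholak--Jockusch--Slaman largeness machinery (Definitions~\ref{def:coh-dj-large} and~\ref{def:coh-dj-m-large}). A large condition commits to making $\Phi_e(G)$ total and can be extended so as to make progress on this commitment, while a small condition can be extended to force $\Phi_e(G)$ partial. Because the cohesive direction cannot be chosen outright below $\emptyset''$ (by Jockusch--Stephan there is a $\emptyset'$-recursive $\vec R$ with no $\low_2$ cohesive set), the paper works with \emph{multiple} Mathias conditions indexed by a cross-section $I \subset 2^{<\omega}$ and lets the path through the resulting $\emptyset''$-recursive tree both pick the cohesive direction and record which $\Phi_e(G)$ are total. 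Your tree also records a direction $\nu$, but the labels you attach along a path compute only $X'$, not $X''$; replacing your $\Sigma^0_1$ forcing by the $\Pi^0_2$ largeness decisions is exactly the missing idea.

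A secondary issue is the one you yourself flag: recovering the stage endpoints from $X \oplus \emptyset'$. The direction $\xi$ is determined by $X$ only via ``$X \cap R_n$ infinite'', which is $\Pi^0_2(X \oplus \emptyset')$, not $\Delta^0_1(X \oplus \emptyset')$; so even the $\GL_1$ claim is not established by what you wrote. But fixing this would still leave you one jump short.
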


To build a desired $\vec{R}$-cohesive set, we mainly apply the technique of controlling double jumps from \cite[\S 5.2]{Cholak.Jockusch.ea:2001.Ramsey}. The idea of controlling double jump is to work with \emph{large} Mathias conditions. Being large is like belonging to a fixed filter. For each condition $p$ and an index $e$, either $p$ is small for $e$ and can be extended to a condition which forces a $\Sigma^0_2$ sentence indexed by $e$, or $p$ is large for $e$ and $e$-large extensions of $p$ can force a $\Pi^0_2$ sentence progressively.

But for our purpose, we need a slightly more complicated variant of Mathias condition. Below, we define this variant and briefly reformulate \cite[\S 5.2]{Cholak.Jockusch.ea:2001.Ramsey} using this new forcing notion.

\begin{definition}
A \emph{multiple Mathias condition} is a pair $((\sigma_i: i \in I), X)$ where $I$ is an index set and each $(\sigma_i, X)$ is a Mathias condition. 

If $((\sigma_i: i \in I), X)$ and $((\tau_j: j \in I), Y)$ are two multiple Mathias conditions and both $I$ and $J$ are subsets of some partial order $(P, \leq_P)$, then $((\sigma_i: i \in I), X) \leq_M^* ((\tau_j: j \in I), Y)$, if and only if
\begin{itemize}
    \item for all $i \in I$ there exists $j \in J$ with $i \leq_P j$,
    \item if $i \in I$ and $j \in J$ are such that $i \leq_P j$, then $(\sigma_i, X) \leq_M (\sigma_j, Y)$.
\end{itemize}
\end{definition}

For convenience, we also need the notion of largeness for (plain) Mathias forcing. The largeness defined below is just a reformulation of largeness in \cite{Cholak.Jockusch.ea:2001.Ramsey}.

\begin{definition}\label{def:coh-dj-large}
For a Mathias condition $p = (\sigma, X)$ and a set $Y$, a \emph{$Y$-branching} of $p$ is a tuple $(X_i, {\tau}_i: i < n)$ such that $X \cap (m, \infty) = \bigsqcup_{i < n} X_i$ where $m = \min \bigcup_{i < n} X_i - 1$, and $\tau_i \subseteq X \cap Y \cap [0,  \min X_i - 1]$ for each $i < n$.

We say that $p$ is \emph{$(Y, \vec{e})$-small}, if there exist $x$ and a $Y$-branching $(X_i, \tau_i: i < n)$ of $p$ such that for each $i < n$,
$$
    \max X_i < x \text{ or } (\sigma \tau_i, X_i) \Vdash \exists e \in \vec{e}, y < x \Phi_e(G; y) \uparrow.
$$
If $p$ is not $(Y, \vec{e})$-small then it is \emph{$(Y, \vec{e})$-large}.
\end{definition}

Note that in the above definition, we may have $X_i$ finite and $\max X_i > x$. In this case, $(\sigma \tau_i, X_i) \Vdash \Phi_e(G; y) \uparrow$ can be naturally interpreted as: $\Phi_e(\rho; y) \uparrow$, for all $\rho$ such that $\sigma\tau_i \subseteq \rho \subseteq \sigma \tau_i \cup X_i$. When we encounter similar situations below, we stick to this interpretation.

Fix $A$ recursive in $\emptyset'$ and let $A_0 = A$ and $A_1 = \omega - A$. We build $G_0$ and $G_1$ such that $G_i \subseteq A_i$ is infinite and $\low_2$ for some $i < 2$. We consider a tentative multiple Mathias forcing, where the indexing partial order is the discrete order on $\{0,1\}$. So conditions are of the form $(\sigma_0,\sigma_1,X)$. A generic sequence of conditions produces two sets $G_0$ and $G_1$, where $G_i$ is approximated by $\sigma_i$'s. We need a tentative largeness for these multiple Mathias conditions, which is based on Definition \ref{def:coh-dj-large}. For a condition $q = (\sigma_0, \sigma_1, X)$, an \emph{$(A_0, A_1)$-branching} of $q$ is a tuple $(X_i, {\tau}_{2i}, {\tau}_{2i+1}: i < n)$ such that $(X_i, {\tau}_{2i + j}: i < n)$ is an $A_j$-branching of $(\sigma_j, X)$. $q$ is \emph{$(\vec{e}_0, \vec{e}_1)$-small}, if there exist $x$ and an $(A_0, A_1)$-branching $(X_i, {\tau}_{2i}, {\tau}_{2i+1}: i < n)$, such that for each $i < n$, either $\max X_i < x$, or
$$
    (\sigma_j \tau_{2i + j}, X_i) \Vdash \exists e \in \vec{e}_j, y < x \Phi_e(G; y) \uparrow \text{ for some } j < 2.
$$
If $q$ is not $(\vec{e}_0, \vec{e}_1)$-small then it is \emph{$(\vec{e}_0, \vec{e}_1)$-large}. Note that, if $(\sigma_j, X)$ is $(A_j, \vec{e}_j)$-large for some $j < 2$ then $q$ is $(\vec{e}_0,\vec{e}_1)$-large. Another easy but important fact is that, if $q$ is $(\vec{e}_0, \vec{e}_1)$-large but $(\sigma_0, X \cap A_0)$ is $(A_0,\vec{e}_0)$-small (as in Definition \ref{def:coh-dj-large}), then $(\sigma_1, X \cap A_1)$ is $(A_1, \vec{e}_1)$-large (see \cite[Lemma 5.7]{Cholak.Jockusch.ea:2001.Ramsey}). This fact allows us to extend $\sigma_j$ in $A_j$ for some $j < 2$. Thus, $(\vec{e}_0,\vec{e}_1)$-largeness means that for some $j < 2$ we can make $G_j \subseteq A_j$ and $\Phi_e(G_j)$ total for all $e \in \vec{e}_j$. So, if we build an appropriate $\leq^*_M$-descending sequence $((\sigma_{n, 0}, \sigma_{n, 1}, X_n): n \in \omega)$ in certain effective way, then we can control the double jump of either $\bigcup_n \sigma_{n,0} \subseteq A_0$ or $\bigcup_n \sigma_{n,1} \subseteq A_1$.

Now we return to Theorem \ref{thm:coh-double-jump}. Our official multiple Mathias conditions are of the form $((\sigma_\nu: \nu \in I), X)$, where $I$ is a finite subset of $2^{<\omega}$ and every $\mu \in 2^{<\omega}$ is comparable with exactly one $\nu \in I$. We impose the reverse extension ordering on $2^{<\omega}$. So, multiple Mathias conditions below are always as above. Note that, if $p = ((\sigma_\mu: \mu \in I), X)$ and $q = ((\tau_\nu: \nu \in J), Y)$ are two conditions as above and $q \leq^*_M p$, then for every $\nu \in J$ there exists a unique $\mu \in I$ such that $\mu \preccurlyeq \nu$ and $\sigma_\mu \preccurlyeq \tau_\nu$.

To build a desired cohesive set, we build a $\emptyset''$-recursive $\leq^*_M$-descending sequence
$$
    (p_k = ((\sigma_{k, \nu}: \nu \in I_k), X_k): k \in \omega).
$$
We require that $T = \bigcup_{k \in \omega} I_k$ is a $\emptyset''$-recursive tree, such that for each $Y \in [T]$ the set $G = \bigcup_{Y \uh k} \sigma_{k, Y \uh k}$ is almost contained by every $R_{Y \uh k}$ and thus $\vec{R}$-cohesive, and $G''$ is uniformly $Y \oplus \emptyset''$-recursive. Then we can get some desired cohesive set recursive in $P$, as $P \gg \emptyset''$.

We formulate the largeness for (official) multiple Mathias conditions.

\begin{definition}\label{def:coh-dj-m-large}
Let $p = ((\sigma_\nu: \nu \in I), X)$ be a multiple Mathias condition. An \emph{$\vec{R}$-branching} of $p$ is a tuple $(X_i, \vec{\tau}_{i,\nu}: i < n, \nu \in I)$ such that
\begin{enumerate}
    \item $X \cap (m, \infty) = \bigsqcup_{i < n} X_i$ where $m = \min \bigsqcup_{i < n} X_i - 1$,
    \item if $\tau \in \vec{\tau}_{i,\nu}$ then $\tau \subseteq X \cap R_\nu$ and $\max \tau < \min X_i$.
\end{enumerate}
The above $\vec{R}$-branching is \emph{low} if $\bigoplus_{i < n} X_i$ is low.

For $(\vec{e}_\nu: \nu \in I)$, $p$ is \emph{$(\vec{e}_\nu: \nu \in I)$-small}, if there exist $x$ and an $\vec{R}$-branching of $p$, say $(X_i, \vec{\tau}_{i,\nu}: i < n, \nu \in I)$, such that for every $i < n$, either $\max X_i < x$, or
$$
    (\sigma_\nu \tau, X_i) \Vdash \exists y < x \Phi_e(G; y) \uparrow \text{ for some } \nu \in I, \tau \in \vec{\tau}_{i,\nu}, e \in \vec{e}_\nu.
$$
If $p$ is not $(\vec{e}_\nu: \nu \in I)$-small, then it is \emph{$(\vec{e}_\nu: \nu \in I)$-large}.
\end{definition}

Note that being $(\vec{e}_\nu: \nu \in I)$-large is a property uniformly recursive in $X''$. As above, when we encounter a $(\vec{e}_\nu: \nu \in I)$-large condition, we commit to make $\Phi_e(G)$ total, if $e \in \vec{e}_\nu$ and $G$ is given by a path $Y \in [T]$ extending $\nu$. Moreover, when we talk of largeness for (plain) Mathias conditions, we refer to Definition \ref{def:coh-dj-large}; and we refer to Definition \ref{def:coh-dj-m-large}, when talking of large multiple Mathias conditions.

Lemma \ref{lem:largeness}(1) below is an analog of \cite[Lemma 5.7]{Cholak.Jockusch.ea:2001.Ramsey}.

\begin{lemma}\label{lem:largeness}
Let $p = ((\sigma_\nu: \nu \in I), X)$ be an $(\vec{e}_\nu: \nu \in I)$-large condition.
\begin{enumerate}
    \item For some $\nu \in I$, $(\sigma_\nu, X \cap R_\nu)$ is $(R_\nu, \vec{e}_\nu)$-large.
    \item If $X \cap (m,\infty) = \bigsqcup_{i < n} X_i$ for some $m$ then $((\sigma_\nu: \nu \in I), X_i)$ is $(\vec{e}_\nu: \nu \in I)$-large for some $i$.
    \item If $q = ((\tau_\nu: \nu \in J), X \cap (m,\infty))$ and $(\vec{d}_\nu: \nu \in J)$ are such that $q \leq^*_M p$ and $\vec{d}_\nu = \vec{e}_\nu$ for all $(\mu,\nu) \in I \times J$ with $\mu \preccurlyeq \nu$, then $q$ is also $(\vec{d}_\nu: \nu \in J)$-large.
\end{enumerate}
\end{lemma}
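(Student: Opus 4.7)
The plan is to prove each part by contraposition: starting from a putative witness of smallness for the stated conclusion (a threshold $x$ together with the appropriate kind of branching), I will assemble an $\vec{R}$-branching of $p$ that witnesses $(\vec{e}_\nu : \nu \in I)$-smallness, contradicting the hypothesized largeness of $p$. The common thread in all three parts is that both smallness clauses---the inequality $\max X_i < x$ and the $\Pi^0_1$ forcing statement $(\sigma_\nu \tau, X_i) \Vdash \exists y < x \, \Phi_e(G;y)\uparrow$---are monotone under shrinking the reservoir and enlarging the threshold, so witnesses assembled or lifted from the smaller objects remain witnesses in $p$.

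For Part (1), I would fix for each $\nu \in I$ a threshold $x_\nu$ and an $R_\nu$-branching $(Y_{\nu,i}, \rho_{\nu,i} : i < n_\nu)$ of $(\sigma_\nu, X \cap R_\nu)$ realizing its $(R_\nu,\vec{e}_\nu)$-smallness. Because $I$ is a finite antichain in $2^{<\omega}$ with every $\mu \in 2^{<\omega}$ comparable to exactly one element of $I$, the family $(R_\nu : \nu \in I)$ partitions $\omega$, so every sufficiently large element of $X$ lies in a unique $R_\nu$ and hence in a unique $Y_{\nu,i}$. Truncating each $Y_{\nu,i}$ above a common cut-off and indexing the resulting pieces by pairs $(\nu,i)$, with $\vec{\tau}_{(\nu,i),\nu} = \{\rho_{\nu,i}\}$ and $\vec{\tau}_{(\nu,i),\nu'} = \emptyset$ for $\nu' \neq \nu$, then yields an $\vec{R}$-branching of $p$; every smallness clause transfers with $x = \max_\nu x_\nu$. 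Part (2) is analogous but easier, since the hypothesis already supplies a partition of $X \cap (m,\infty)$: one simply pools the branchings of the individual $((\sigma_\nu : \nu \in I), X_i)$, truncates above a common cut-off, and uses $x = \max_i x^i$.

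Part (3) is where the main work and the main obstacle both live. Assume $q$ is $(\vec{d}_\nu)$-small, witnessed by a threshold $x$ and a branching $(Y_k, \vec{\rho}_{k,\nu} : k < n, \nu \in J)$. For each $\nu \in J$ let $\mu(\nu)$ be the unique $\mu \in I$ with $\mu \preccurlyeq \nu$; the Mathias inequalities hidden in $q \leq^*_M p$ let me write $\tau_\nu = \sigma_{\mu(\nu)} \hat\tau_\nu$ with $\hat\tau_\nu \subseteq X$. I would re-use the same pieces $Y_k$---they lie in $Y \subseteq X$ and already cover $X$ above a threshold, since $Y = X \cap (m,\infty)$---and for each $\mu \in I$ set
$$
    \vec{\tau}_{k,\mu} = \{\, \hat\tau_\nu \rho : \nu \in J,\ \mu(\nu) = \mu,\ \rho \in \vec{\rho}_{k,\nu} \,\}.
$$
A smallness witness $(\tau_\nu \rho, Y_k) \Vdash \exists y < x \, \Phi_e(G;y)\uparrow$ with $e \in \vec{d}_\nu$ is then literally $(\sigma_{\mu(\nu)}(\hat\tau_\nu \rho), Y_k) \Vdash \cdots$ with $e \in \vec{e}_{\mu(\nu)}$, via the compatibility $\vec{d}_\nu = \vec{e}_{\mu(\nu)}$ read from the hypothesis. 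The obstacle I expect to have to address is the side condition $\hat\tau_\nu \rho \subseteq X \cap R_{\mu(\nu)}$ demanded by an $\vec{R}$-branching of $p$: the inclusion $\rho \subseteq R_\nu \subseteq R_{\mu(\nu)}$ is immediate, whereas $\hat\tau_\nu \subseteq R_{\mu(\nu)}$ is not formally forced by the bare definition of $\leq^*_M$. I plan to close this gap by invoking the working convention, to be respected throughout the construction of Theorem~\ref{thm:coh-double-jump}, that $\leq^*_M$-refinements grow the stem $\sigma_\nu$ only by elements of $R_\nu$---equivalently, by reading this requirement into the notion of refinement actually used in the subsequent proofs.
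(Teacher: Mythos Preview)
Your proposal is correct and follows essentially the same contrapositive strategy as the paper: for each part you assume smallness of the target object, harvest its witnessing branching and threshold, and reassemble these into an $\vec{R}$-branching witnessing smallness of $p$. In particular, your construction in Part (3) is literally the paper's---the paper writes $\tau_\nu = \sigma_\mu \zeta$ and collects $\eta = \zeta\rho$ into $\vec{\eta}_{i,\mu}$, exactly your $\hat\tau_\nu\rho$ into $\vec{\tau}_{k,\mu}$.

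You have in fact been more careful than the paper on one point: the side condition $\hat\tau_\nu \subseteq R_{\mu(\nu)}$ that you flag is not verified in the paper's proof either. The paper simply asserts that the reassembled data form an $\vec{R}$-branching of $p$, tacitly relying on the fact that in the actual construction (Lemma~\ref{lem:coh-dj-ext}, clause (3)) extensions always grow the stems inside the corresponding $R_\mu$. Your proposed resolution---reading this convention into the notion of refinement used throughout the construction---is exactly the right move and is how the lemma is applied in practice.
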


\begin{proof}
(1) If $(\sigma_\nu, X \cap R_\nu)$ is $(R_\nu, \vec{e}_\nu)$-small for each $\nu \in I$, then their witnesses together witness that $p$ is $(\vec{e}_\nu: \nu \in I)$-small.

(2) By an argument similar to (1).

(3) Note that for each $\nu \in J$ there exists a unique $\mu \in I$ with $\mu \preccurlyeq \nu$.

Suppose that $q$ is $(\vec{d}_\nu: \nu \in J)$-small. Then there exist $x$ and an $\vec{R}$-branching
$$
    (Y_i, \vec{\rho}_{i,\nu}: i < n, \nu \in J)
$$
which witness the smallness of $q$.

For each $i < n$ and $\mu \in I$, let $\vec{\eta}_{i,\mu}$ be the sequence of $\eta$'s such that
$$
    \exists \zeta \exists \nu \in J \exists \rho \in \vec{\rho}_{i,\nu} (\mu \preccurlyeq \nu \wedge \tau_\nu = \sigma_\mu \zeta \wedge \eta = \zeta \rho).
$$
Then, $x$ and $(Y_i, \vec{\eta}_{i,\mu}: i < n, \mu \in I)$ witness that $p$ is $(\vec{d}_\mu: \mu \in I)$-small.
\end{proof}

Note that, (3) in the above lemma is the reason that we need sequences $\vec{\tau}_{i,\nu}$'s in the definition of $\vec{R}$-branchings.

A multiple Mathias condition $((\sigma_i: i \in I), X)$ is \emph{low}, if $X$ is low.

\begin{lemma}\label{lem:coh-dj-ext}
Let $p = ((\sigma_\nu: \nu \in I), X)$ and $(\vec{e}_\nu: \nu \in I)$ be such that $p$ is low and $(\vec{e}_\nu: \nu \in I)$-large. For every $(e_\nu: \nu \in I)$, there exist $x$, $q = ((\tau_\nu: \nu \in J), Y)$ and $(\vec{d}_\nu: \nu \in J)$ such that
\begin{enumerate}
    \item $q$ is a low and $(\vec{d}_\nu: \nu \in J)$-large extension of $p$,
    \item $J \not\subseteq I$ and if $\nu \in J$ then either $\nu \in I$ or $\nu^- \in I$,
    \item if $(\mu,\nu) \in I \times J$ and $\mu \preccurlyeq \nu$ then $\tau_\nu - \sigma_\mu \subseteq X \cap R_{\mu}$,
    \item if $\nu \in J \cap I$ then $\vec{d}_\nu = \vec{e}_\nu$ and
    $$
        (\sigma_\nu, X \cap R_\nu) \Vdash \exists y < x, e \in \vec{e}_\nu \Phi_e(G; y) \uparrow,
    $$
    \item if $\nu \in J - I$ then $\vec{d}_\nu = \vec{e}_\nu \<e_\nu\>$ or
    $$
            \vec{d}_\nu = \vec{e}_\nu \text{ and } (\tau_\nu, Y) \Vdash \exists y < x \Phi_{e_\nu}(G; y) \uparrow,
    $$
    \item if $\nu \in J - I$ then $\Phi^*_{\vec{e}_{\nu^-}}(\tau_\nu; l) \downarrow$ where $l = \max\{|\nu|: \nu \in I\}$.
\end{enumerate}

Moreover, $x$, $J$, $(\tau_\nu, \vec{d}_\nu: \nu \in J)$ and a lowness index of $Y$ can be obtained from $I$, $(\sigma_\nu, \vec{e}_\nu, e_\nu: \nu \in I)$ and a lowness index of $X$, in a uniformly $\emptyset''$-recursive way.
\end{lemma}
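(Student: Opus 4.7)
The plan is to mimic the $\Sigma^0_2/\Pi^0_2$ dichotomy for double-jump control in \cite[\S 5.2]{Cholak.Jockusch.ea:2001.Ramsey}, adapted to multiple Mathias conditions, and process the nodes $\nu \in I$ one at a time. Throughout I maintain a running large multiple Mathias condition as an invariant, using $\emptyset''$ to resolve each decision (largeness being a $\Pi^0_2$ property uniformly in a lowness index of the reservoir), and finish with an application of the Low Basis Theorem (Theorem \ref{thm:low-basis}) to obtain the low reservoir $Y$.

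For each $\nu \in I$ in turn, I first $\emptyset''$-ask whether the sub-condition $(\sigma_\nu, X \cap R_\nu)$ is $(R_\nu, \vec{e}_\nu)$-large. If the answer is no, it is $(R_\nu, \vec{e}_\nu)$-small; by reducing (via the branching witness and Lemma \ref{lem:largeness}(2)) to the trivial branching, I obtain an $x$ such that $(\sigma_\nu, X \cap R_\nu) \Vdash \exists y < x\, \exists e \in \vec{e}_\nu\, \Phi_e(G;y) \uparrow$, so I keep $\nu \in J \cap I$ with $\tau_\nu = \sigma_\nu$ and $\vec{d}_\nu = \vec{e}_\nu$, verifying (4). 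If instead $(\sigma_\nu, X \cap R_\nu)$ is large, I put $\nu 0, \nu 1$ into $J - I$ and $X'$-recursively search for finite $\tau_{\nu i} \subseteq X \cap R_\nu$ making $\Phi^*_{\vec{e}_\nu}(\tau_{\nu i}; l) \downarrow$; such $\tau_{\nu i}$ exist because largeness forbids any covering by divergence commitments at input $l$, and this yields (3) and (6). For each child $\nu i$ I then run a second $\emptyset''$-dichotomy on the sub-condition $(\sigma_\nu \tau_{\nu i}, X \cap R_\nu \cap (\max \tau_{\nu i}, \infty))$: does it remain large when $e_\nu$ is appended to $\vec{e}_\nu$ on that branch? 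If yes, set $\vec{d}_{\nu i} = \vec{e}_\nu \<e_\nu\>$; if no, the failure provides a divergence witness for $\Phi_{e_\nu}$ and I set $\vec{d}_{\nu i} = \vec{e}_\nu$, recording the forcing fact in (5). Lemma \ref{lem:largeness}(3) then ensures that the extended multiple condition is $(\vec{d}_\nu: \nu \in J)$-large as a whole. Finally, applying the Low Basis Theorem to a suitable $X'$-recursively bounded tree parametrizing admissible reservoirs $Y \subseteq X$ that preserve largeness of the combined condition delivers a low $Y$ together with a lowness index computable uniformly from that of $X$ and the already-constructed data.

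The main obstacle is ensuring that the sequential, per-node $\emptyset''$-decisions cohere into a single large condition on the combined index set $J$. In particular, each per-node largeness question must be asked relative to the current intermediate condition (after earlier nodes have been processed and their $\tau$'s committed), not the original $p$, so that earlier decisions cannot silently invalidate later ones; Lemma \ref{lem:largeness}(3) is the linchpin here, letting me translate largeness for the refined indexing back to largeness for the coarser one when needed. A subsidiary challenge is threading uniform $\emptyset''$-indices through every step — the largeness queries, the search for $\tau_{\nu i}$, the sub-dichotomies for $e_\nu$, and the Low Basis Theorem application — all parametrized by the lowness index of $X$; this is routine but demands careful relativization to establish the final moreover clause.
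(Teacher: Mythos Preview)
There is a genuine gap in your plan, and it is precisely at the point where you invoke Lemma~\ref{lem:largeness}(3) to conclude that the combined condition $q$ is $(\vec{d}_\nu:\nu\in J)$-large. That lemma requires $\vec{d}_\nu=\vec{e}_\mu$ for the $\mu\in I$ below $\nu$; it says nothing when $\vec{d}_{\nu i}=\vec{e}_\nu\langle e_\nu\rangle$ strictly extends the old vector. So your per-node dichotomies, even if each one is answered correctly, do not assemble into a proof of global largeness. Relatedly, your second dichotomy is asked on the sub-condition $(\sigma_\nu\tau_{\nu i},\,X\cap R_\nu\cap(\max\tau_{\nu i},\infty))$: if this is $(R_\nu,\vec{e}_\nu\langle e_\nu\rangle)$-small, the divergence witness you extract lives on some piece $X_j\subseteq X\cap R_\nu$, and to get $(\tau_{\nu i},Y)\Vdash\exists y<x\,\Phi_{e_\nu}(G;y)\uparrow$ you must put the shared reservoir $Y$ inside $X_j$. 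But $X_j\subseteq R_\nu$ is disjoint from $R_{\nu'}$ for every incomparable $\nu'\in I$, so this kills the other branches. Finally, the class of reservoirs $Y$ ``preserving largeness'' is $\Pi^0_2$, not $\Pi^0_1$, so the Low Basis Theorem cannot be applied to it directly.

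The paper's argument avoids all of this by working globally rather than node-by-node. One first forms $q_0$ with the full list $\vec{d}_{0,\nu i}=\vec{e}_\nu\langle e_\nu\rangle$ on every new child, and then iterates: if $q_k$ is $(\vec{d}_{k,\nu})$-small, take (via the Low Basis Theorem) a \emph{low} $\vec{R}$-branching $(Z_i,\vec{\rho}_{i,\nu})$ of the shared reservoir $Y_k$ witnessing this smallness. By Lemma~\ref{lem:largeness}(2) some piece $Z_i$ still makes $((\tau_{k,\nu}),Z_i)$ $(\vec{e}_\nu:\nu\in J)$-large; on that piece the smallness obstruction must therefore come from a \emph{new} index $e_\mu$ for some $\mu\in J-I$, i.e.\ $(\tau_{k,\mu}\rho,Z_i)\Vdash\exists y<z_k\,\Phi_{e_\mu}(G;y)\uparrow$. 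Shrink the shared reservoir to $Z_i$, record the divergence, and drop $e_\mu$ from $\vec{d}_{k,\mu}$. The set $\{\nu:\vec{d}_{k,\nu}\neq\vec{e}_\nu\}$ strictly decreases, so the iteration terminates at a large $q$. The point is that the branching partitions the full reservoir $Y_k$, not $Y_k\cap R_\nu$, so shrinking to a piece does not sever the other branches; and lowness is maintained because the Low Basis Theorem is applied to the $\Pi^0_1$ class of smallness \emph{witnesses}, which are finite branchings, not to a class of reservoirs.
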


Let us pause for a while to see what the above lemma describes. Mainly, it tells us that a low and large condition $p$ can be extended to another low and large $q$. The set of finite strings $I$ in $p$ is to be understood as a cross section of the tree $T$ mentioned above, and thus $J$ in $q$ is another cross section of higher level. (3) ensures that a path along $T$ gives us a cohesive set. (2) tells us that some $\mu \in I$ could be a terminal node on $T$ and thus it is also in $J$; (4) gives the reason for such $\mu$ being terminal: we can not fulfill the commitment of forcing a $\Pi^0_2$ sentence. For non-terminal $\mu \in I$, (5) tells us that either $q$ commits to force a new $\Pi^0_2$ sentence, or a $\Sigma^0_2$ sentence is forced; while by (6), we learn that $q$ makes some progress for $p$'s commitments.

\begin{proof}[Proof of Lemma \ref{lem:coh-dj-ext}]
Let $l = \max\{|\nu|: \nu \in I\}$. For $\nu \in I$, let $\rho_\nu \in [X \cap R_\nu]^{<\omega}$ be such that
$$
    \Phi^*_{\vec{e}_\nu}(\sigma_\nu \rho_\nu; l) \downarrow.
$$
Let $S = \{\nu \in I: \rho_\nu \text{ is undefined}\}$. So, $S$ is the set of $\nu \in I$ such that $(\sigma_\nu, X \cap R_\nu)$ turns out to be $(R_\nu, \vec{e}_\nu)$-small at $l$. Let
$$
    J = \{\nu \in 2^{<\omega}: \nu \in S \text{ or } \nu^- \in I - S\}.
$$
It follows from Lemma \ref{lem:largeness}(1) that $S \neq I$ and $J \not\subseteq I$. So, every $\mu \in 2^{<\omega}$ is comparable with a unique $\nu \in J$, and (2) holds immediately for $J$.

\medskip

We define $q$ as the $\leq^*_M$-least condition of a finite $\leq^*_M$-descending sequence.

For $\nu \in J \cap I$, let $\tau_{0,\nu} = \sigma_\nu$ and $\vec{d}_{0,\nu} = \vec{e}_\nu$; for $\nu \in J - I$, let $\tau_{0, \nu} = \sigma_{\nu^-} \rho_{\nu^-}$ and $\vec{d}_{0,\nu} = \vec{e}_{\nu^-} \<e_\nu\>$. Let $m = \max \bigcup_{\nu \in J} \tau_{0,\nu}$, $Y_0 = X \cap (m,\infty)$ and $q_0 = ((\tau_{0,\nu}: \nu \in J), Y_0)$. Moreover, let $\vec{e}_\nu = \vec{e}_{\nu^-}$ for $\nu \in J - I$. Clearly, $q_0$ is a low extension of $p$. By Lemma \ref{lem:largeness}(3), $q_0$ is $(\vec{e}_\nu: \nu \in J)$-large.

Let $q_k = ((\tau_{k,\nu}: \nu \in J), Y_k)$ be a low and $(\vec{e}_\nu: \nu \in J)$-large extension of $p$, and let $(\vec{d}_{k, \nu}: \nu \in J)$ be such that $\vec{e}_\nu \subseteq \vec{d}_{k,\nu} \subseteq \vec{d}_{0,\nu}$ for all $\nu \in J$.

\emph{Case 1}, $q_k$ is $(\vec{d}_{k,\nu}: \nu \in J)$-small.

By Low Basis Theorem, there exist $z_k$ and a low $\vec{R}$-branching of $q_k$, say
$$
    (Z_i, \vec{\rho}_{i,\nu}: i < n, \nu \in J),
$$
such that $z_k$ and the branching witness the smallness of $q_k$. As $q_k$ is $(\vec{e}_\nu: \nu \in J)$-large, we can pick $i < n$, $\mu \in J - I$, $\rho \in \vec{\rho}_{i,\mu}$ such that $((\tau_{k,\nu}: \nu \in J), Z_i)$ is $(\vec{e}_\nu: \nu \in J)$-large and
$$
    (\tau_{k,\mu} \rho, Z_i) \Vdash \exists y < z_k \Phi_{e_\mu}(G; y) \uparrow.
$$

Replace $\vec{d}_{k,\mu}$ with $\vec{e}_\mu$ in $(\vec{d}_{k,\nu}: \nu \in J)$ to get $(\vec{d}_{k+1,\nu}: \nu \in J)$, and replace $\tau_{k,\mu}$ with $\tau_{k,\mu} \rho$ in $(\tau_{k,\nu}: \nu \in J)$ to get $(\tau_{k+1,\nu}: \nu \in J)$. Let
$$
    q_{k+1} = ((\tau_{k+1,\nu}: \nu \in J), Z_i).
$$
Then $q_{k+1}$ is a low and $(\vec{e}_\nu: \nu \in J)$-large extension of $p$.

\emph{Case 2}, $q_k$ is $(\vec{d}_{k,\nu}: \nu \in J)$-large.

Let $q = q_k$ and $(\vec{d}_\nu: \nu \in J) = (\vec{d}_{k,\nu}: \nu \in J)$, and let $x = \max\{l, z_0, \ldots, z_{k-1}\}$.

\medskip

As every $q_k$ is $(\vec{e}_\nu: \nu \in J)$-large and $\{\nu \in J: \vec{d}_{k,\nu} \neq \vec{e}_\nu\}$ is decreasing, eventually Case 2 applies. It follows immediately from the above construction that $x$, $q$ and $(\vec{d}_\nu: \nu \in J)$ are as desired. The effectiveness follows from a routine analysis of the construction.
\end{proof}

Now we are ready to construct a cohesive set.

\begin{proof}[Proof of Theorem \ref{thm:coh-double-jump}]
Let $I_0 = \{\emptyset\}$, $p_0 = ((\emptyset), \omega)$, where $(\emptyset)$ is the sequence containing $\emptyset$ as its only element indexed by $\emptyset$. Clearly, $p_0$ is $(\emptyset)$-large.

Suppose that a low multiple Mathias condition $p_k = ((\sigma_{k, \nu}: \nu \in I_k), X_k)$ is given and $p_k$ is $(\vec{e}_{k, \nu}: \nu \in I_k)$-large. For each $\nu \in I_k$, let $e_{k,\nu} = k$. Apply Lemma \ref{lem:coh-dj-ext} to $p_k$ and $(\vec{e}_{k, \nu}, e_{k,\nu}: \nu \in I_k)$, we denote the resulting $q$, $J$ and $(\vec{d}_\nu: \nu \in J)$ by $p_{k+1}$, $I_{k+1}$ and $(\vec{e}_{k+1,\nu}: \nu \in I_{k+1})$ respectively.

We define a tree $T = \bigcup_{k < \omega} I_k$. Note that $\nu \in T$ if and only if $\nu \in I_{|\nu|}$. By the effectiveness of Lemma \ref{lem:coh-dj-ext}, we can have $T \leq_T \emptyset''$. Moreover, we define a $\emptyset''$-recursive labelling of $T$ by assigning $(\sigma_{\nu}, \vec{e}_{\nu}) = (\sigma_{|\nu|,\nu}, \vec{e}_{|\nu|,\nu})$ to $\nu \in T$.

Fix $Y \in [T]$, let $G = \bigcup_{\nu \prec Y} \sigma_\nu \in [\omega]^\omega$. By a trick in the proof of Theorem \ref{thm:non-PA-Coh}, $G$ is infinite. By Lemma \ref{lem:coh-dj-ext} and its proof, $G$ is $\vec{R}$-cohesive and
$$
    \Phi_e(G) \text{ is total} \Leftrightarrow e \in \vec{e}_{Y \uh (e+1)}.
$$
So, $G'' \leq_T Y \oplus \emptyset''$.

Hence, every $P \gg \emptyset''$ computes an $\vec{R}$-cohesive $G$ with $G'' \leq_T P$.
\end{proof}

\begin{remark}
In Lemma \ref{lem:coh-dj-ext}, when we extend a condition $p$ to $q$, we have $q$ satisfying two kinds of requirements simultaneously: to approximate cohesiveness, and to decide some $\Sigma^0_2$ sentences. The reader may wonder whether we could streamline these: extend $p$ to $q_0$ to decide some $\Sigma^0_2$ sentences, and then $q_0$ to $q_1$ for cohesiveness. But note that, we can not shrink the infinite tail $X$ of a condition $p$ to a subset of some $X \cap R_i$, as we did in \S \ref{ss:coh-nonPA}. Otherwise, we would have obtained a $\low_2$ cohesive set. Although \cite[Theorem 3.6]{Cholak.Jockusch.ea:2001.Ramsey} could give us an infinite $\low_2$ subset of either $X \cap R_i$ or $X \cap (\omega - R_i)$, neither of its two proofs \cite{Cholak.Jockusch.ea:2001.Ramsey} in could produce such a subset in a uniformly $\emptyset''$-recursive way. Moreover, by relativizing a theorem of Jockusch and Stephan \cite{Jockusch.Stephan:1993.cohesive} that there exists a recursive sequence without $\low$ cohesive set, 
there exists a $\emptyset'$-recursive $\vec{R}$ without $\low_2$ cohesive set. 

So, it is impossible to streamline the density lemma below $\emptyset''$. However, we do not rule out the possibility of a streamlined argument with an oracle $P \gg \emptyset''$. We believe that a substantially different approach is necessary to make such a technical improvement. But the current argument is perhaps easier for people familiar with \cite[\S 5.2]{Cholak.Jockusch.ea:2001.Ramsey} and can serve as a warm-up for the following \S \ref{ss:2rb-low2}.
\end{remark}

\section{Rainbows for Colorings of Pairs}\label{s:RRT22}

In this section, we present two results on rainbows parallel to those in \S \ref{s:coh}. Before we go to the proofs, we need some preparations. In particular, we introduce some additional notions concerning rainbows in \S \ref{ss:rainbows}, and define a family of finite combinatorial structures in \S \ref{ss:fgt}. The main results are presented in \S \ref{ss:2rb-nPA} and \S \ref{ss:2rb-low2}.

\subsection{Additional notions}\label{ss:rainbows}

For a $2$-bounded coloring $g: [\omega]^{n} \to \omega$ and $k \leq n$, a set $X \in [\omega]^{\leq \omega}$ is a \emph{$k$-tail $g$-rainbow}, if
$$
    g(\sigma\rho) \neq g(\tau\zeta)
$$
for all $\sigma, \tau \in [X]^{n-k}$ and \emph{distinct} $\rho, \zeta \in [X]^k$. Obviously, for a coloring $g$ of $[\omega]^n$, $n$-tail rainbows coincide with rainbows.

If $g$ is a coloring of $[\omega]^n$ such that $\lim_{y} g(\sigma\<y\>)$ exists for all $\sigma \in [\omega]^{n-1}$, then $g$ is \emph{stable}.

If $g$ is a $2$-bounded coloring of pairs such that for all $(x,y) \in [\omega]^2$
$$
    g(x,y) = \<m,y\>
$$
where $m = \min\{n: g(n,y) = g(x,y)\}$, then $g$ is \emph{normal}. Note that, $\omega$ is a $1$-tail rainbow for all normal colorings. Let $\mathcal{A}$ be the set of all normal colorings of pairs. Clearly, if $f$ is a $2$-bounded coloring and $\omega$ is a $1$-tail $f$-rainbow, then there exists a unique $\hat{f} \in \mathcal{A}$, such that $f$-rainbows and $\hat{f}$-rainbows coincide. This may explain the meaning of normality.

For a Mathias condition $(\sigma,X)$, let
$$
    \mathcal{A}_{\sigma,X} = \{g \in \mathcal{A}: \forall x \in X(\sigma\<x\> \text{ is a rainbow for } g)\}.
$$
Note that $\mathcal{A}_{\sigma,X}$ is $\Pi^X_1$ and can be identified as the set of infinite paths of an $X$-recursive subtree of $2^{<\omega}$. So, $\mathcal{A} = \mathcal{A}_{\emptyset,\omega}$. Moreover, let $\mathcal{A}^*_{\sigma,X}$ be the set of finite subsets of $\mathcal{A}_{\sigma,X}$ and $\mathcal{A}^* = \mathcal{A}^*_{\emptyset,\omega}$.

For $\vec{g}$ a finite sequence of colorings, a set is a \emph{$\vec{g}$-rainbow} if it is a $g$-rainbow for every $g \in \vec{g}$. If $\vec{g}$ and $\vec{h}$ are finite sequences of colorings, then we write $\vec{g}\vec{h}$ for the concatenation of the two sequences. If $f$ is a single coloring, then we write $f \vec{g}$ for $\<f\>\vec{g}$ and $\vec{g}f$ for $\vec{g}\<f\>$. The size (or length) of $\vec{g}$ is denoted by $|\vec{g}|$.

\subsection{Fast-growing trees}\label{ss:fgt}

For a non-empty finite tree $T \subset [\omega]^{<\omega}$, we define a function $m_T: T \to [0,1]$. We call $m_T$ a \emph{measure}, although it is not a measure in standard sense. We define $m_T$ by induction:
\begin{enumerate}
    \item $m_T(\emptyset) = 1$;
    \item if $m_T(\sigma)$ is defined for $\sigma \in T$ and $|\{x: \sigma \<x\> \in T\}| = k > 0$, then
    $$
        m_T(\sigma \<x\>) = k^{-1} m_T(\sigma), \text{ for } \sigma \<x\> \in T.
    $$
\end{enumerate}

We extend $m_T$ to subsets of $T$. For a prefix free $S \subseteq T$, if $S = \emptyset$ then $m_T S = 0$; otherwise,
$$
    m_T S = \sum_{\sigma \in S} m_T(\sigma).
$$
For an arbitrary $S \subseteq T$, let
$$
    m_T S = \sup \{m_T R: R \subseteq S \text{ is prefix free}\}.
$$
In particular, $m_T T = m_T \widehat{T} = 1$ and $m_T \emptyset = 0$.

Next we introduce \emph{probability quantifiers}. If $\varphi(x)$ is a property of finite strings in $[\omega]^{<\omega}$, then we write $(P_{T} \sigma > r) \varphi(\sigma)$ if and only if
$$
    m_T \{\sigma \in \widehat{T}: \varphi(\sigma)\} > r.
$$
Similarly, we define $(P_{T} \sigma \geq r)$, $(P_{T} \sigma \leq r)$ and $(P_{T} \sigma < r)$.

The following simple observation will be useful:
\begin{itemize}
    \item Let $T \subset [\omega]^{<\omega}$ be a finite tree, $S \subseteq \widehat{T}$ and $(T_\sigma: \sigma \in S)$ be a sequence of finite trees $\subset [\omega]^{<\omega}$. In addition, let $\varphi(x)$ be a property of finite sequences. If $(P_{T} \sigma > r) (\sigma \in S \wedge (P_{T_\sigma} \tau > s) \varphi(\sigma\tau))$, then $(P_{U} \rho > rs) \varphi(\rho)$, where
        $$
            U = \{\xi: \xi \in T \text{ or } \exists \tau \in S \exists \rho \in T_\tau(\xi = \tau\rho)\}.
        $$
\end{itemize}

We define a family of \emph{fast-growing trees} on which we expect to find a great amount of rainbows.

The \emph{$n$-th $(a,b)$-based exponentiation} is the function
$$
    \epsilon_{n, a, b}(k) = 2^{(n+1)(n+k+1)+b+1} (a+k+1)^b.
$$
If $\epsilon: \omega \to \omega$, then \emph{a finite tree growing at rate $\epsilon$} is a finite tree $T \subset [\omega]^{<\omega}$ such that
$$
    \sigma \in T - \widehat{T} \to |\{x: \sigma \<x\> \in T\}| \geq \epsilon(|\sigma|).
$$
Let $\mathcal{T}(n,a,b)$ be the set of all finite trees growing at rate $\epsilon_{n,a,b}$.

The following properties of $\mathcal{T}(n,a,b)$ are trivial but useful:
\begin{enumerate}
    \item[(T1)] $\mathcal{T}(m, a, b) \subseteq \mathcal{T}(n, c, d)$ if $m \geq n, a \geq c, b \geq d$;
    \item[(T2)] if $T \in \mathcal{T}(n, a, b)$ and $(T_\tau \in \mathcal{T}(n+|\tau|,a+|\tau|,b): \tau \in \widehat{T})$ then the tree $S$ defined below is in $\mathcal{T}(n,a,b)$:
        $$
            S = \{\xi: \xi \in T \text{ or } \exists \tau \in \widehat{T} \exists \rho \in T_\tau(\xi = \tau\rho)\}.
        $$
\end{enumerate}
(T2) implies that if we properly glue fast-growing trees together then we get a new fast-growing tree. Note that $T_\tau$ could be empty.

\begin{lemma}\label{lem:fgt}
Suppose that $T \in \mathcal{T}(n + m, a, b)$ and $P \subseteq \widehat{T}$ are such that $m_T P \geq 2^{-m}$. Then there exists $S \in \mathcal{T}(n, a, b)$ such that $\widehat{S} \subseteq P$ and $m_T(P - \widehat{S}) < 2^{-m}$.
\end{lemma}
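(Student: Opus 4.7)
The plan is to prune $T$ greedily from the leaves upward: first mark some nodes ``good'', then take $S$ to be the good nodes all of whose prefixes are good.

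Concretely, I would define by induction on depth that a leaf $\sigma \in \widehat{T}$ is \emph{good} iff $\sigma \in P$, and an internal $\sigma$ at depth $d$ is good iff it has at least $\epsilon_{n,a,b}(d)$ good children. Then let $S$ consist of those $\sigma \in T$ such that $\sigma$ and every proper prefix of $\sigma$ are good. A routine check would verify that, provided the root is good, $S \in \mathcal{T}(n,a,b)$ and $\widehat{S} \subseteq P$: any internal node of $S$ inherits all its good children (at least $\epsilon_{n,a,b}(|\sigma|)$ of them) into $S$, while any leaf of $S$ is forced to be a leaf of $T$ (otherwise it would be internal in $T$ with good children, hence internal in $S$), and consequently lies in $P$.

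The key estimate I would push through is a recursive bound on the \emph{bad density}
\[
    \alpha_d := \sup\{m_T(P_\tau)/m_T(\tau) : \tau \text{ bad and } |\tau| = d\},
\]
where $P_\tau := \{\rho \in P : \tau \preccurlyeq \rho\}$ and $\sup \emptyset = 0$. Bad leaves lie outside $P$ and contribute $0$; for an internal bad $\tau$ at depth $d$ with $k \geq \epsilon_{n+m,a,b}(d)$ children, fewer than $\epsilon_{n,a,b}(d)$ of which are good, splitting $m_T(P_\tau) = \sum_x m_T(P_{\tau \langle x \rangle})$ between good and bad children and using $m_T(\tau \langle x \rangle) = m_T(\tau)/k$ yields
\[
    \alpha_d \leq \frac{\epsilon_{n,a,b}(d)}{\epsilon_{n+m,a,b}(d)} + \alpha_{d+1} = 2^{-m(2n+m+d+2)} + \alpha_{d+1}.
\]
Iterating from the (finite) maximum depth downward gives $\alpha_d \leq \sum_{d' \geq d} 2^{-m(2n+m+d'+2)} \leq 2^{1-m(2n+m+2)}$, and for $m \geq 1$ the inequality $m(2n+m+1) > 1$ makes this strictly less than $2^{-m}$.

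To finish, observe that $m_T P \geq 2^{-m}$ forces the root to be good, for otherwise $m_T P = m_T P_\emptyset \leq \alpha_0 < 2^{-m}$, a contradiction. Letting $B_0$ be the prefix-free set of shallowest bad nodes, every element of $P - \widehat{S}$ sits below a unique $\tau \in B_0$, so
\[
    m_T(P - \widehat{S}) = \sum_{\tau \in B_0} m_T(P_\tau) \leq \alpha_0 \sum_{\tau \in B_0} m_T(\tau) \leq \alpha_0 < 2^{-m}.
\]
The degenerate case $m = 0$ is handled separately ($m_T P \geq 1$ forces $P = \widehat{T}$, so $S = T$ works). The step I expect to be most delicate is the series estimate, which crucially exploits that the ratio $\epsilon_{n+m,a,b}(d)/\epsilon_{n,a,b}(d) = 2^{m(2n+m+d+2)}$ grows fast enough in both $d$ and $m$; the rest is bookkeeping.
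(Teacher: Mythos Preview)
Your argument is correct, and it takes a genuinely different route from the paper's. The paper works \emph{top-down}: starting from the root, it keeps a child $\sigma\langle x\rangle$ whenever the relative density $m_{T(\sigma\langle x\rangle)}P(\sigma\langle x\rangle)$ stays above a threshold $2^{-m-|\sigma|-1}$, and checks that this pruned tree $S_0$ lies in $\mathcal{T}(n,a,b)$ with $\widehat{S_0}\subseteq P$. That, however, does not directly bound $m_T(P-\widehat{S_0})$; instead the paper passes to a \emph{maximal} $S\in\mathcal{T}(n,a,b)$ with $\widehat{S}\subseteq P$ and argues by contradiction, re-applying the existence step to $P-\widehat{S}$ and observing that $S\cup S_1$ would violate maximality. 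Your construction is \emph{bottom-up}: you label nodes good/bad from the leaves and extract both conclusions from the single recursive estimate $\alpha_d\le 2^{-m(2n+m+d+2)}+\alpha_{d+1}$. This avoids the separate maximality step and in fact yields the quantitatively sharper bound $m_T(P-\widehat{S})\le \alpha_0\le 2^{1-m(2n+m+2)}$, though only the cruder $<2^{-m}$ is ever used downstream. The paper's approach, on the other hand, is slightly more modular (existence and remainder are decoupled) and its density-threshold criterion is arguably closer in spirit to the Lebesgue-density arguments one sees elsewhere in the subject.
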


\begin{proof}
The lemma holds trivially for $m = 0$. Below, we assume that $m > 0$.

For $\sigma \in T$, let $P(\sigma) = \{\tau: \sigma\tau \in P\}$. So, $P(\sigma) \subseteq \widehat{T(\sigma)}$. Obviously, $T(\emptyset) = T$ and $P(\emptyset) = P$. We define a tree $S_0$ by induction:
\begin{enumerate}
    \item $\emptyset \in S_0$;
    \item if $\sigma \in S_0$, $\sigma\<x\> \in T$ and $m_{T(\sigma\<x\>)} P(\sigma\<x\>) \geq 2^{-m-|\sigma|-1}$ then $\sigma\<x\> \in S_0$.
\end{enumerate}

It follows immediately from the definition that $S_0 \subseteq T$ and $m_{T(\sigma)} P(\sigma) \geq 2^{-m-|\sigma|}$ for all $\sigma \in S_0$. To see $S_0 \in \mathcal{T}(n,a,b)$, fix $\sigma \in S_0 - \widehat{S_0}$. For a contradiction, suppose that
$$
    |S_0(\sigma) \cap [\omega]^1| = |\{\sigma\<x\> \in T: m_{T(\sigma\<x\>)} P(\sigma\<x\>) \geq 2^{-m-|\sigma|-1}\}| < \epsilon_{n,a,b}(|\sigma|).
$$
Then
$$
    m_{T(\sigma)} P(\sigma) < \frac{\epsilon_{n,a,b}(|\sigma|)}{\epsilon_{n+m,a,b}(|\sigma|)} + 2^{-m-|\sigma|-1} \leq 2^{-m-|\sigma|}.
$$
Thus, we have a desired contradiction and $|S_0(\sigma) \cap [\omega]^1| \geq \epsilon_{n,a,b}(|\sigma|)$. We can also conclude that $\sigma \in S_0 - \widehat{T} \to \sigma \not\in \widehat{S_0}$. Hence $\widehat{S_0} \subseteq P$.

Now, let $S \in \mathcal{T}(n,a,b)$ be maximal with respect to $\widehat{S} \subseteq P$. If $m_T(P - \widehat{S}) \geq 2^{-m}$, then let $S_1 \in \mathcal{T}(n,a,b)$ be such that $\widehat{S_1} \subseteq P - \widehat{S}$. But, $S' = S \cup S_1$ would be a tree in $\mathcal{T}(n,a,b)$ with $\widehat{S'} \subseteq P$, contradicting the maximality of $S$.
\end{proof}

If $\sigma \in [\omega]^{<\omega}$, $X \in [\omega]^\omega$ and $\vec{g}$ is a finite sequence of colorings, then let $\mathcal{T}^X_R(n,\sigma,\vec{g})$ be the set of $T \in \mathcal{T}(n,|\sigma|,|\vec{g}|)$ such that
$$
    \tau \in T \to \tau \in [X]^{<\omega} \wedge (\sigma\tau \text{ is a rainbow for } \vec{g}).
$$
Roughly, $\mathcal{T}^X_R(n,\sigma,\vec{g})$ is a family of fast-growing trees, whose nodes are $\vec{g}$-rainbows from $X$. The following lemma, which is essentially a generalization of \cite[Proposition 3.5]{Csima.Mileti:2009.rainbow}, allows us to build or extend fast-growing trees of rainbows.

\begin{lemma}\label{lem:fgt-rb}
Suppose that $(\sigma,X)$ is a Mathias condition, $\vec{g} \in \mathcal{A}^*_{\sigma,X}$ and $T \in \mathcal{T}^X_R(n,\sigma,\vec{g})$. Then for all $x \in X \cap (\max \bar{T}, \infty)$,
$$
    (P_{T} \tau < 2^{-n}) (\sigma\tau\<x\> \text{ is \emph{not} a rainbow for } \vec{g}).
$$
Hence $(P_{T} \tau > 1 - 2^{-n}) (\sigma\tau\<x\> \text{ is a $\vec{g}$-rainbow for infinitely many } x \in X)$.
\end{lemma}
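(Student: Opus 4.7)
The plan is to fix $x \in X$ with $x > \max \bar{T}$ and bound the $m_T$-measure of leaves $\tau \in \widehat{T}$ for which $\sigma\tau\<x\>$ is not a $\vec{g}$-rainbow. I will establish this bound separately for each $g \in \vec{g}$ and then take a union bound. The ``hence'' clause will follow from the finiteness of $\widehat{T}$ by a short compactness-style argument.

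For a single $g \in \vec{g}$, normality gives $g(a,x) = \<m,x\>$ for some $m \le a$, so the colors of new pairs $\{a, x\}$ (all living in ``column $x$'') are disjoint from the colors of pairs contained in $\sigma\tau$ (whose larger coordinate is $\le \max \bar{T} < x$). Since $\sigma\tau$ is already a $\vec{g}$-rainbow, $\sigma\tau\<x\>$ fails to be a $g$-rainbow iff some distinct $a, b \in \sigma\tau$ satisfy $g(a, x) = g(b, x)$. By $2$-boundedness the set
\[
  M_g(x) = \{\{a, b\} : a < b < x,\ g(a,x) = g(b,x)\}
\]
forms a matching on $\{0, \dots, x-1\}$, and the assumption $g \in \mathcal{A}_{\sigma, X}$ guarantees that no edge of $M_g(x)$ lies in $[\sigma]^2$.

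Next I count ``first $g$-bad'' nodes level by level: $\tau$ is \emph{first $g$-bad} if $\sigma\tau\<x\>$ is not a $g$-rainbow while $\sigma\tau^-\<x\>$ is. For such $\tau = \tau^-\<b\>$, the element $b$ must be the $M_g(x)$-partner of some element of $\sigma\tau^-$; hence for each good $\rho \in T$ of length $k-1$, the number of $y$ with $\rho\<y\> \in T$ being first $g$-bad is at most $|\sigma\rho| = |\sigma|+k-1$, since each $a \in \sigma\rho$ has at most one partner in $M_g(x)$. Because $\rho$ has at least $\epsilon_{n, |\sigma|, |\vec{g}|}(k-1)$ children in $T$, this yields
\[
  m_T(\{\tau : \tau \text{ is first $g$-bad with } |\tau|=k\}) \le \frac{|\sigma|+k-1}{\epsilon_{n,|\sigma|,|\vec{g}|}(k-1)} \le 2^{-(n+1)(n+k) - |\vec{g}| - 1},
\]
using $|\sigma|+k-1 \le (|\sigma|+k)^{|\vec{g}|}$ (the case $|\vec{g}|=0$ being trivial). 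Every $g$-bad leaf extends a unique first $g$-bad node, so summing the geometric series in $k \ge 1$ gives total $g$-bad-leaf measure $\le 2^{-(n+1)^2 - |\vec{g}|}$, and a union bound over $g \in \vec{g}$ then yields $m_T \le |\vec{g}| \cdot 2^{-(n+1)^2 - |\vec{g}|} < 2^{-n}$, which is the first displayed inequality.

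For the ``hence'' clause, let $B \subseteq \widehat{T}$ be the set of $\tau$ for which $\sigma\tau\<x\>$ is a $\vec{g}$-rainbow for only finitely many $x \in X$; each $\tau \in B$ then has a finite exceptional set $F_\tau \subseteq X$, and finiteness of $\widehat{T}$ makes $\bigcup_{\tau \in B} F_\tau$ finite, so any $x \in X$ beyond that union witnesses $B \subseteq \{\tau : \sigma\tau\<x\> \text{ not a } \vec{g}\text{-rainbow}\}$ and hence $m_T B < 2^{-n}$. The main technical obstacle is the precise calibration of $\epsilon_{n, a, b}$: its exponent is engineered to absorb the level-$k$ loss of size $|\sigma\rho|$, the geometric sum across levels, and the linear factor from the union bound over $|\vec{g}|$ colorings, while still leaving the $2^{-n}$ target intact.
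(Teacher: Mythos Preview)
Your proof is correct and follows the same overall structure as the paper's: both count, level by level, the ``first bad'' children of a good node $\tau$ and then sum a geometric series over levels. The only difference is in the per-level count of bad children. The paper treats all $g\in\vec g$ simultaneously by assigning to each bad child $y$ the partial function $\nu_y:\vec g\to|\sigma\tau|$ with $\nu_y(g)$ the index of the $g$-collision partner of $y$, notes that distinct bad $y$'s give distinct partial functions, and thereby bounds the number of bad children by $2^{|\vec g|}(|\sigma\tau|+1)^{|\vec g|}$ in one stroke; you instead bound the $g$-bad children by $|\sigma\tau|$ for each $g$ separately (via the matching $M_g(x)$) and take a union bound over $\vec g$ at the end. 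Both routes feed the same power of $(|\sigma|+k)$ into the denominator $\epsilon_{n,|\sigma|,|\vec g|}(k-1)$ and arrive at the target $2^{-n}$, so the difference is cosmetic. Your explicit treatment of the ``hence'' clause is also fine; the paper leaves that step implicit.
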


\begin{proof}
Fix $\tau\<y\> \in T$ and $x > y$ such that $\sigma\tau\<x\>$ is a $\vec{g}$-rainbow. Define a partial function $\nu_y: \vec{g} \to |\sigma\tau|$ as below:
$$
    \nu_y(g) = \min\{i < |\sigma\tau|: g((\sigma\tau)(i),x) = g(y,x)\}.
$$
If $\sigma\tau\<yx\>$ is not a $\vec{g}$-rainbow then $\dom \nu_y \neq \emptyset$. As every $g \in \vec{g}$ is $2$-bounded, $\nu_y(g) \neq \nu_z(g)$, if $g \in \dom \nu_y \cap \dom \nu_z$ for distinct $y$ and $z$. But there at most $2^{|\vec{g}|}|\sigma\tau|^{|\vec{g}|}$ many partial functions from $\vec{g}$ to $|\sigma\tau|$. So
$$
    |\{\tau\<y\> \in T: \sigma\tau\<yx\> \text{ is not a rainbow for } \vec{g}\}| < 2^{|\vec{g}|}(|\sigma\tau|+1)^{|\vec{g}|}.
$$
Now the lemma follows easily from the above inequality.
\end{proof}

\subsection{Tail rainbows}

Lemma \ref{lem:tail-rainbow-random} below tells us that it is not hard to find $1$-tail rainbows for arbitrary $2$-bounded colorings. So, we can  assume that a given $2$-bounded coloring is normal, by passing from $\omega$ to an infinite $1$-tail rainbow, and at the same time maintain low complexity.

\begin{lemma}\label{lem:tail-rainbow-random}
If $f: [\omega]^{n+1} \to \omega$ is $2$-bounded and $X$ is $1$-random in $f$, then $X$ computes a $1$-tail $f$-rainbow in $[\omega]^\omega$.
\end{lemma}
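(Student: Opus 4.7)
The plan is to construct the $1$-tail $f$-rainbow $R$ greedily inside $X$, using $f$ to check each extension. Starting from $R_0 = \emptyset$, at stage $k$ I let $r_k$ be the least $y \in X$ with $y > r_{k-1}$ such that $R_k \cup \{y\}$ remains a $1$-tail $f$-rainbow. Since this rainbow condition is a finite combinatorial check on $f$, the recursion is uniform in $X \oplus f$, producing an infinite $R \subseteq X$ with $R \leq_T X \oplus f$.

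The main point is to show that $r_k$ always exists, i.e., that the set of ``bad'' $y > r_{k-1}$ (those failing to extend $R_k$) is always finite. Assume $R_k$ is already a $1$-tail rainbow. Then $R_k \cup \{y\}$ fails to be one precisely when there are distinct $\mu_1, \mu_2 \in [R_k \cup \{y\}]^{n+1}$ with $\max \mu_1 \neq \max \mu_2$ and $f(\mu_1) = f(\mu_2)$. Since $R_k$ is already a rainbow, at least one $\mu_i$ must contain $y$; and any $\mu_i$ containing $y$ has $y = \max \mu_i$, because every other element of $R_k \cup \{y\}$ lies below $y$. It follows that exactly one of the two (say $\mu_1$) contains $y$, has the shape $\sigma \cup \{y\}$ for some $\sigma \in [R_k]^n$, and collides with some $\mu_2 \in [R_k]^{n+1}$. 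By the $2$-boundedness of $f$, each such $\mu_2$ admits at most one $f$-partner, hence determines at most one bad $y$ (namely the maximum of that partner, provided it has the required shape); the bad set therefore has size at most $\binom{|R_k|}{n+1}$, in particular finite.

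Since $X$ is $1$-random in $f$, $X$ is infinite as a subset of $\omega$; in particular $X \cap (r_{k-1}, \infty)$ is infinite and meets the cofinite set of good candidates. Hence $r_k$ is well-defined at every stage and the recursion yields the desired $R$. The main obstacle, as I see it, is really just the case analysis in the second paragraph --- using the $1$-tail rainbow definition to localize all new collisions at the new element $y$ and then invoking $2$-boundedness; once that is in place, everything reduces to a standard greedy recursion driven by $X \oplus f$.
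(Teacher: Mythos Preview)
Your greedy argument is correct, and the counting in the second paragraph is exactly right: once $R_k$ is a $1$-tail rainbow and $y > \max R_k$, any new collision must pair a tuple $\sigma\cup\{y\}$ with some $\mu_2 \in [R_k]^{n+1}$, and $2$-boundedness bounds the bad $y$'s by $\binom{|R_k|}{n+1}$. So the construction never gets stuck.

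However, your approach is genuinely different from the paper's, and it is worth noting what each buys. The paper builds a fixed recursive tree $T\subset[\omega]^{<\omega}$ with rapidly growing branching, shows that at every level the $m_T$-measure of $1$-tail $f$-rainbows exceeds $1/2$, and then uses $1$-randomness of $X$ relative to $f$ (via a Ku\v{c}era-type positive-measure argument) to extract a rainbow path $R\in[T]$. Your argument, by contrast, is a direct greedy search that never invokes randomness at all: you only use that $X$ is infinite, and you openly compute with $f$ to test each candidate. In particular your proof actually establishes the stronger (and simpler) fact that every $2$-bounded $f$ has an infinite $1$-tail rainbow recursive in $f$ alone --- building inside $X$ is unnecessary. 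This more than suffices for every application of the lemma in the paper, where one just wants to pass to a $1$-tail rainbow without raising complexity beyond $f$.

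The one thing to be aware of is that your conclusion is $R \leq_T X \oplus f$, not $R \leq_T X$, and you say so explicitly. Read literally, the lemma asserts the latter. The paper's randomness argument is aimed at producing an $R$ that is $X$-recursive via a fixed recursive decoding of $X$ along $T$, with $1$-$f$-randomness guaranteeing (rather than computing) that the output is a rainbow. Your proof does not recover that exact formulation, but it delivers what the paper actually needs downstream.
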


\begin{proof}
Let $h(k) = k$ for $k \leq n$. For $k > n$, let
$$
    g(k) = \min \{2^m: 2^m \geq 2^{k-n+1} \frac{k!}{(n+1)! (k-n-1)!}\},
$$
and let
$$
    h(k) = h(k-1) + g(k).
$$
Let $T$ be the set of $\sigma \in [\omega]^{<\omega}$ such that $h(k) \leq \sigma(k) < h(k+1)$ for all $k < |\sigma|$. So, $T$ is a recursive tree. An easy calculation shows that for all $l$,
$$
    m_{T \cap [\omega]^{\leq l}} \{\sigma \in T \cap [\omega]^l: \sigma \text{ is a 1-tail rainbow for } f\} > 2^{-1}.
$$
It follows immediately that $X$ computes a $1$-tail $f$-rainbow $R \in [T]$.
\end{proof}

\subsection{Rainbows of non-PA degrees}\label{ss:2rb-nPA}

The theorem below can be read as a variant of Liu's Theorem \ref{thm:Liu-partition}, and is parallel to Theorem \ref{thm:non-PA-Coh}. Note that there is no corresponding theorem for $2$-colorings, namely, there exists a $2$-coloring of pairs which admits no infinite non-PA homogeneous set. To see this, fix a recursive enumeration $(a_s: s \in \omega)$ of the halting problem, and let $k$ be a $2$-coloring of pairs such that
$$
    k(x,y) = 0 \text{ if and only if } K \uh x = \{a_s < x: s < y\}.
$$
Clearly, every infinite $k$-homogeneous set computes $K$ and thus is of PA degree. A careful reader may find that $k$ is stable and induces a trivial partition $(\omega, \emptyset)$ of $\omega$. However, to pass from an infinite homogeneous set for this partition to an infinite homogeneous set for $k$, we need information of $k$ which is Turing equivalent to the halting problem.

\begin{theorem}\label{thm:rb-nPA}
If $f: [\omega]^2 \to \omega$ is $2$-bounded, then there exists an $f$-rainbow $X \in [\omega]^\omega$ of non-PA
degree. \end{theorem}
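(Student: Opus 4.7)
The plan is to adapt the Mathias-forcing strategy of Theorem \ref{thm:non-PA-Coh} to produce a rainbow. As in that proof, I would build the generic $G = \bigcup \sigma_n$ as the union of the finite parts of a $\leq_M$-descending sequence of Mathias conditions $(\sigma_n, X_n)$ whose reservoirs $X_n$ all satisfy $X_n \not\gg \emptyset$; Liu's Theorem \ref{thm:Liu-partition} is the device that lets the reservoirs stay non-PA. The new ingredient is that $G$ must be an $f$-rainbow, and for this I would use the fast-growing tree machinery of \S \ref{ss:fgt}.

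First I would reduce to the case that $f$ is normal. By Lemma \ref{lem:tail-rainbow-random}, any $1$-random relative to $f$ computes an infinite $1$-tail $f$-rainbow $Y$; restricting attention to $Y$ and replacing $f$ by its induced normal coloring, which has the same rainbows on $Y$, we may assume $f \in \mathcal{A}$. The Mathias conditions would then be $(\sigma, X)$ where $\sigma$ is a finite $f$-rainbow, $X \not\gg \emptyset$, and $\<f\> \in \mathcal{A}^*_{\sigma, X}$, so that $\sigma \<x\>$ is a rainbow for every $x \in X$.

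The core would be a density lemma of the same shape as the one in \S \ref{ss:coh-nonPA}: for every such $(\sigma, X)$ and every index $e$, there exists a $\leq_M$-extension $(\tau, Y)$ with $Y \not\gg \emptyset$ that either forces $\Phi_e(G; x) \uparrow$ for some $x$, or satisfies $\Phi_e(\tau; x) \downarrow = \varphi_x(x) \downarrow$. I would first build a fast-growing tree $T \in \mathcal{T}^X_R(n, \sigma, \<f\>)$ of $f$-rainbow extensions of $\sigma$ inside $X$ by iterated application of Lemma \ref{lem:fgt-rb}. If some leaf $\tau$ and some $x$ satisfy $\Phi_e(\sigma\tau; x) = \varphi_x(x) \downarrow$, extend to $(\sigma\tau, X \cap (\max\tau, \infty))$, whose reservoir is cofinite in $X$ and so still non-PA. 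Otherwise, I would seek a leaf $\tau$ and an $x$ with $(\sigma\tau, X \cap (\max\tau, \infty)) \Vdash \Phi_e(G; x) \uparrow$. If even this fails, then for every leaf $\tau$ and every $x$ there are rainbow completions $\tau \rho \subseteq X$ making $\Phi_e(\sigma\tau\rho; x) \downarrow$, and I would extract a contradiction with $X \not\gg \emptyset$ by means of an $X$-computable voting procedure across the leaves of a suitably enlarged fast-growing tree: because Lemma \ref{lem:fgt-rb} guarantees that almost all leaves admit infinitely many further rainbow completions in $X$, the value of $\Phi_e(\sigma\tau\rho; x)$ appearing for a majority of $(\tau, \rho)$ recovers $1 - \varphi_x(x)$ without querying $f$, and so defines a PA function computable from $X$.

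Iterating the density lemma along an enumeration of computation indices, exactly as in the proof of Theorem \ref{thm:non-PA-Coh} (including the same counting trick to ensure $G$ is infinite), produces an $f$-rainbow $G$ of non-PA degree. The main obstacle I anticipate is the last sub-case of the density lemma: the voting procedure must be insensitive to the $\Pi^0_1$-in-$f$ property of being a rainbow, which requires calibrating the growth rate of $T$ carefully against $|\sigma|$ and $|\vec{g}|$; the family $\mathcal{T}(n,a,b)$ and the measure bound in Lemma \ref{lem:fgt-rb} appear to be tuned exactly for this purpose.
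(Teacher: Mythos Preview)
Your proposal has a genuine gap in the ``voting'' sub-case of the density lemma, and it is precisely the obstacle you flag at the end. The difficulty is this: your conditions carry only the information $(\sigma, X)$ with $X \not\gg \emptyset$, but $f$ is an \emph{arbitrary} $2$-bounded coloring, possibly of PA degree itself. Your Case~1 (``some leaf $\tau$ of $T$ and some $x$ satisfy $\Phi_e(\sigma\tau;x)=\varphi_x(x)\downarrow$'') failing only controls the computation on the \emph{finitely many} leaves of the fixed tree $T$; it says nothing about longer extensions $\sigma\tau\rho$. Your Case~2 failing gives, for each leaf $\tau$ and each $x$, some $\rho \subseteq X$ with $\Phi_e(\sigma\tau\rho;x)\downarrow$ --- but $\sigma\tau\rho$ need not be an $f$-rainbow, and nothing prevents the convergent value from equalling $\varphi_x(x)$. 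To force the majority vote to avoid $\varphi_x(x)$ you would need to restrict the search to rainbow extensions, but identifying those requires access to $f$, and then your PA function is only $X \oplus f$-computable, which is useless since $f$ may already be PA. Lemma~\ref{lem:fgt-rb} does not help here: it bounds the fraction of leaves of a \emph{rainbow} tree that fail to extend to rainbows, but it does not let you recognise rainbows from $X$ alone.

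The paper's proof resolves exactly this tension by enriching the conditions to triples $(\sigma, X, \mathcal{C})$, where $\mathcal{C}$ is a nonempty $\Pi^0_1$ class of finite tuples of normal colorings with $X \oplus \mathcal{C} \not\gg \emptyset$. The $e$-test quantifies over \emph{all} $\vec{g} \in \mathcal{C}$ and \emph{all} $h \in \mathcal{A}_{\sigma,X}$, so the sets $C(e,x,b,m,p)$ of witnesses to failure are $\Pi^0_1$ in $X \oplus \mathcal{C}$ without any reference to the particular $f$. If the test fails, one shows (Lemma~\ref{lem:rb-nPA-C}) that for some $x$ both $C(e,x,0,m,p)$ and $C(e,x,1,m,p)$ are nonempty --- otherwise $X \oplus \mathcal{C}$ would compute a PA function. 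One then \emph{adds} the ghost colorings $h_0, h_1$ from these classes to $\mathcal{C}$, and their presence forces $\Phi_e(G;x)\uparrow$ on a large fraction of rainbow leaves for the enlarged tuple (Lemma~\ref{lem:rb-nPA-fail}). The point is that divergence is forced not by forbidding all extensions in $X$, but by requiring the generic to be a rainbow for colorings specifically chosen to block convergence; this is the idea your plan is missing.
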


By Lemma \ref{lem:tail-rainbow-random}, we may assume that $\omega$ is a $1$-tail $f$-rainbow. With this assumption, we may further assume that $f \in \mathcal{A}$. We build a desired $f$-rainbow by forcing. The forcing argument goes roughly as following:
\begin{enumerate}
    \item We define a forcing notion and related admissibility and largeness;
    \item We work only with large admissible conditions and with Lemma \ref{lem:rb-nPA-ext} we can always extend such conditions;
    \item We prove Lemmata \ref{lem:rb-nPA-pass} and \ref{lem:rb-nPA-fail}, which together guarantee that we have densely many chances to force every non-PA requirement below:
        $$
            \exists x (\Phi_e(G; x) \downarrow = \varphi_x(x) \downarrow) \text{ or } \Phi_e(G) \text{ is partial};
        $$
    \item Then we inductively apply Lemmata \ref{lem:rb-nPA-pass} and \ref{lem:rb-nPA-fail} to obtain a decreasing sequence of conditions, which in turn yields a desired rainbow.
\end{enumerate}

\begin{definition}\label{def:rb-nPA-po}
A condition is a tuple $(\sigma,X,\mathcal{C})$ such that $(\sigma,X)$ is a Mathias condition, and $\mathcal{C}$ is a non-empty closed subset of $\mathcal{A}^*_{\sigma,X}$ such that elements of $\mathcal{C}$ are of same length.

For two conditions $p = (\sigma,X,\mathcal{C})$ and $q = (\tau,Y,\mathcal{D})$, $p \leq q$ if and only if $(\sigma,X) \leq_M (\tau,Y)$ and for every $\vec{g} \in \mathcal{C}$ there exists $\vec{h} \in \mathcal{D}$ such that $\vec{g} \supseteq \vec{h}$.
\end{definition}

A condition $p = (\sigma,X,\mathcal{C})$ is meant to represent the set
$$
    R(p) = B(\sigma,X) \cap \{Y: Y \text{ is a rainbow for some } \vec{g} \in \mathcal{C}\}.
$$
If $p \geq q$ then $R(p) \supseteq R(q)$. As elements of $\mathcal{C}$ are of some fixed length, $\mathcal{C}$ is a compact subset of $\mathcal{A}^*$ and thus can be coded in an effective way by a subset of $\omega$. Hence, we can talk of the complexity of $\mathcal{C}$ in a natural way. A condition $p = (\sigma,X,\mathcal{C})$ is \emph{admissible}, if $X \oplus \mathcal{C}$ is of non-PA degree and $f \in \mathcal{A}_{\sigma,X}$.

We need a \emph{largeness} notion and work only with large admissible conditions. For two finite sequences $\vec{e}$ and $\vec{x}$ of same length, a condition $(\sigma,X,\mathcal{C})$ is \emph{$\vec{e}$-large at $\vec{x}$}, if there exists $n \in \omega$ such that
$$
    \forall \vec{g} \in \mathcal{C}, T \in \mathcal{T}^X_R(n,\sigma,\vec{g}) (P_{T} \tau > 2^{-1}) \Phi_{\vec{e}}(\sigma\tau; \vec{x}) \uparrow.
$$
The $n$ above is called a \emph{largeness order}. If $n$ is a largeness order, so is every $n + k$. Staying with $\vec{e}$-large conditions ensures that $\Phi_e(G)$ is partial for each $e \in \vec{e}$.

We list some simple facts:
\begin{enumerate}
    \item $(\emptyset,\omega,\mathcal{A})$ is the greatest condition and admissible.
    \item If $\vec{e} = \vec{x} = \emptyset$ then every condition is $\vec{e}$-large at $\vec{x}$ with order $1$.
    \item If $(\sigma,X,\mathcal{C})$ is $\vec{e}$-large at $\vec{x}$ then $\Phi_{\vec{e}}(\sigma; \vec{x}) \uparrow$.
\end{enumerate}

With the following Lemma \ref{lem:rb-nPA-ext}, we can extend a large and admissible condition, while preserving largeness and admissibility.

\begin{lemma}\label{lem:rb-nPA-ext}
Let $p = (\sigma,X,\mathcal{C})$ be an admissible condition, which is $\vec{e}$-large at $\vec{x}$ with order $n$. If $\vec{g} \in \mathcal{C}$ and $S \in \mathcal{T}^X_R(n+3,\sigma,f\vec{g})$, then for $P \subseteq \widehat{S}$ with $m_S P > 2^{-1}$ there exist $\tau \in P$ and $q = (\sigma\tau,Y,\mathcal{D}) \leq p$ such that $q$ is admissible and $\vec{e}$-large at $\vec{x}$.
\end{lemma}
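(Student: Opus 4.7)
The plan is to extend $p$ by choosing a suitable $\tau\in P$ together with an infinite $Y\subseteq X$ of non-PA degree and a refined $\mathcal{D}$. Rainbow preservation, non-PA admissibility, and $\vec{e}$-largeness are controlled, respectively, by Lemma~\ref{lem:fgt-rb}, Liu's Theorem~\ref{thm:Liu-partition}, and the $+3$ slack in the growth rate of $S$ together with the extra $f$-coloring built into $S$.

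First, by Lemma~\ref{lem:fgt-rb} applied to $S\in\mathcal{T}^X_R(n+3,\sigma,f\vec{g})$, for every $x\in X\cap(\max\bar{S},\infty)$ the set $B_x=\{\tau\in\widehat{S}:\sigma\tau\<x\>\text{ is not an }f\vec{g}\text{-rainbow}\}$ satisfies $m_S B_x<2^{-(n+3)}$. I partition $X\cap(\max\bar{S},\infty)$ by the finitely-valued map $x\mapsto B_x\cap P$, whose range has at most $2^{|P|}$ values, and apply a relativized form of Liu's Theorem~\ref{thm:Liu-partition} with oracle $X\oplus\mathcal{C}$ to obtain an infinite class $Y$, constant of value $B$, such that $Y\oplus X\oplus\mathcal{C}$ has non-PA degree. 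Since $m_S(P\setminus B)\geq m_S P-m_S B_x>2^{-1}-2^{-(n+3)}>0$, there is some $\tau\in P\setminus B$; for this $\tau$, $\sigma\tau\<y\>$ is an $f\vec{g}$-rainbow for every $y\in Y$, so in particular $f\in\mathcal{A}_{\sigma\tau,Y}$.

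Next, define $\mathcal{D}$ as the set of length-$l$ sub-sequences $\vec{h}$ obtained from each $\vec{g}'\in\mathcal{C}$ by restricting to those coordinates $g$ with $g\in\mathcal{A}_{\sigma\tau,Y}$, padded to a uniform common length $l$ if necessary. Then $\vec{g}\in\mathcal{D}$ by the choice of $Y$ and $\tau$, so $\mathcal{D}\neq\emptyset$; $\mathcal{D}$ is closed because $\mathcal{C}$ is and the restriction is uniform; and for every $\vec{g}'\in\mathcal{C}$ the associated $\vec{h}$ is a sub-sequence of $\vec{g}'$, which witnesses $q=(\sigma\tau,Y,\mathcal{D})\leq p$. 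Admissibility of $q$ follows from $f\in\mathcal{A}_{\sigma\tau,Y}$ and from $Y\oplus\mathcal{D}\leq_T Y\oplus X\oplus\mathcal{C}$ being non-PA.

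The delicate step is $\vec{e}$-largeness of $q$ at $\vec{x}$. Given $\vec{h}\in\mathcal{D}$ arising from some $\vec{g}'\in\mathcal{C}$ and any $T'\in\mathcal{T}^Y_R(n',\sigma\tau,\vec{h})$ with $n'$ chosen large enough in terms of $n$ and $|\tau|$, I graft $T'$ onto $S$ at the leaf $\tau$ to form $\widetilde{T}=S\cup\{\tau\rho:\rho\in T'\}\subseteq[X]^{<\omega}$. By property (T2), with the $+3$ buffer absorbing the extra $f$ coordinate and the $|\tau|$ shift of parameters, $\widetilde{T}\in\mathcal{T}^X_R(n,\sigma,\vec{g}')$. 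The $\vec{e}$-largeness of $p$ at order $n$ gives $(P_{\widetilde{T}}\rho>2^{-1})\,\Phi_{\vec{e}}(\sigma\rho;\vec{x})\uparrow$, and the multiplicative relation $m_{\widetilde{T}}(\tau\rho)=m_{\widetilde{T}}(\tau)\cdot m_{T'}(\rho)$ transfers this into $(P_{T'}\tau'>2^{-1})\,\Phi_{\vec{e}}(\sigma\tau\tau';\vec{x})\uparrow$, establishing the largeness order $n'$ for $q$. The main obstacle is exactly this transfer: tracking the growth-rate arithmetic in $\mathcal{T}(n,a,b)$ under grafting so that $\widetilde{T}$ lands in the prescribed family, and performing the $m$-measure bookkeeping so that the ``bad set'' at the root $\sigma$ rescales correctly to a bad set of $m_{T'}$-measure below $2^{-1}$ at the new root $\sigma\tau$.
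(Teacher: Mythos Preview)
Your proposal has two genuine gaps.

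First, your definition of $\mathcal{D}$ goes the wrong way. The ordering requires that every $\vec{h}\in\mathcal{D}$ \emph{extend} some element of $\mathcal{C}$, but you form $\vec{h}$ as a sub-sequence of some $\vec{g}'\in\mathcal{C}$, which is containment in the opposite direction. ``Padding'' cannot repair this: the coordinates you dropped were exactly the ones not in $\mathcal{A}_{\sigma\tau,Y}$, so you cannot put them back and still have $\vec{h}\in\mathcal{A}^*_{\sigma\tau,Y}$. In the paper, $\mathcal{D}$ is taken as a $\Pi^0_1$ \emph{subclass} of $\mathcal{C}$ (so $\leq$ is trivial), cut out by both $\mathcal{A}^*_{\sigma\tau,Y}$ and the largeness condition itself; the point is to show this subclass is nonempty.

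Second, and more seriously, the largeness argument by grafting $T'$ onto $S$ at the single leaf $\tau$ cannot work. The leaf $\tau$ has $m_S(\tau)\leq\epsilon_{n+3,|\sigma|,|f\vec{g}|}(0)^{-1}$, which is tiny. Largeness of $p$ applied to $\widetilde{T}$ says only that the ``bad'' set of leaves has $m_{\widetilde{T}}$-measure $<2^{-1}$; nothing prevents that entire bad set from living above $\tau$, and after rescaling by $m_{\widetilde{T}}(\tau)^{-1}$ its $m_{T'}$-measure can be $1$. (There is also a mismatch of rainbow families: $S$ consists of $f\vec{g}$-rainbows and $T'$ of $\vec{h}$-rainbows for $\vec{h}$ arising from a different $\vec{g}'\in\mathcal{C}$, so $\widetilde{T}$ is not in $\mathcal{T}^X_R(n,\sigma,\vec{g}^\ast)$ for any $\vec{g}^\ast$ to which largeness of $p$ applies.) The paper avoids this by working in the opposite order: \emph{before} choosing $\tau$, for every leaf $\tau\in\widehat{S}$ it asks whether some tree $T_\tau\in\mathcal{T}^X_R(n+3+|\tau|,\sigma\tau,\vec{g})$ witnesses failure of the largeness bound. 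Gluing \emph{all} such $T_\tau$'s onto $S$ and applying largeness of $p$ together with Lemma~\ref{lem:fgt} shows that $(P_S\tau<2^{-2})(T_\tau\neq\emptyset)$. This, combined with Lemma~\ref{lem:fgt-rb} and $m_SP>2^{-1}$, guarantees that for every $x$ there is $\tau_x\in P$ with $T_{\tau_x}=\emptyset$ and $\sigma\tau_x\<x\>$ an $f\vec{g}$-rainbow; Liu's theorem applied to $x\mapsto\tau_x$ then yields $\tau$ and $Y$, and $T_\tau=\emptyset$ gives largeness of $q$ for free, with $\vec{g}\in\mathcal{D}$ witnessing $\mathcal{D}\neq\emptyset$.
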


\begin{proof}
For each $\tau \in \widehat{S}$, let $T_\tau$ be a tree $T \in \mathcal{T}^X_R(n+3+|\tau|,\sigma\tau,\vec{g})$ such that
$$
    (P_{T} \rho \geq 2^{-1}) \Phi_{\vec{e}}(\sigma\tau\rho; \vec{x}) \downarrow;
$$
or let $T_\tau = \emptyset$ if there exists no $T$ as above. Let
$$
    S' = \{\xi: \xi \in S \text{ or } \exists \tau \in \widehat{S} \exists \rho \in T_\tau(\xi = \tau\rho)\}.
$$
By (T2), $S' \in \mathcal{T}^X_R(n+3,\sigma,\vec{g})$.

Suppose that $(P_{S} \tau \geq 2^{-2})(T_\tau \neq \emptyset)$, then $(P_{S'} \rho \geq 2^{-3}) \Phi_{\vec{e}}(\sigma\rho; \vec{x}) \downarrow$. By Lemma \ref{lem:fgt}, there exists $S'' \in \mathcal{T}^X_R(n, \sigma, \vec{g})$ such that $\widehat{S''} \subseteq \widehat{S'}$ and $\Phi_{\vec{e}}(\sigma\rho; \vec{x}) \downarrow$ for all $\rho \in S''$, contradicting the largeness of $p$. Hence,
\begin{equation}\label{eq:rb-nPA-ext}
    (P_{S} \tau < 2^{-2})(T_\tau \neq \emptyset).
\end{equation}

Now, we define a finite partition of $X \cap (\max \bar{S}, \infty)$. For each $x \in X \cap (\max \bar{S}, \infty)$, let $\tau_x \in P$ be such that $\sigma\tau_x \<x\>$ is an $f\vec{g}$-rainbow and $T_{\tau_x} = \emptyset$. Combining Lemma \ref{lem:fgt-rb} and \eqref{eq:rb-nPA-ext}, $\tau_x$ is defined for all $x \in X \cap (\max \bar{S}, \infty)$. By Liu's Theorem \ref{thm:Liu-partition}, there exist $\tau \in P$ and $Y \in [X \cap (\max \bar{S}, \infty)]^\omega$ such that $\tau_x = \tau$ for all $x \in Y$ and $\mathcal{C} \oplus Y$ is of non-PA degree.

Let $\mathcal{D}$ be the set of $\vec{h} \in \mathcal{C} \cap \mathcal{A}^*_{\sigma\tau,Y}$ such that
$$
    \forall T \in \mathcal{T}^Y_R(n+3+|\tau|,\sigma\tau,\vec{h}) (P_{T} \rho > 2^{-1}) \Phi_{\vec{e}}(\sigma\tau\rho; \vec{x}) \uparrow.
$$
Obviously, $\mathcal{D}$ is a $\Pi^{0,Y \oplus \mathcal{C}}_1$ subset of $\mathcal{C}$ and $\vec{g} \in \mathcal{D}$.

Hence, $(\sigma\tau,Y,\mathcal{D}) \leq p$ is admissible and $\vec{e}$-large at $\vec{x}$ with order $n + 3 + |\tau|$.
\end{proof}

Below, we find conditions forcing $\Phi_e(G)$ not PA, for generic rainbows $G$ \emph{almost everywhere}. We achieve this, by either forcing $\Phi_e(G;x) \downarrow = \varphi_x(x) \downarrow$ for some $x$ or forcing $\Phi_e(G)$ partial almost everywhere. We need both Liu's Theorem (which is embedded in Lemma \ref{lem:rb-nPA-ext}) and its clever proof.

An admissible condition $p = (\sigma,X,\mathcal{C})$ \emph{passes the $e$-test at $x$ with order $m$}, if
$$
    \forall \vec{g} \in \mathcal{C}, h \in \mathcal{A}_{\sigma,X} \exists T \in \mathcal{T}^X_R(m,\sigma,\vec{g}h) (P_{T} \tau > 2^{-3}) \Phi_e(\sigma\tau; x) \downarrow = \varphi_x(x) \downarrow.
$$
If $p$ passes the $e$-test at some $x$, then \emph{$p$ passes the $e$-test}.

Intuitively, if $p$ passes the $e$-test, then very likely we can get $\Phi_e(G; x) \downarrow = \varphi_x(x) \downarrow$ for generic rainbow $G$. This is formally stated as the lemma below.

\begin{lemma}\label{lem:rb-nPA-pass}
Suppose that a condition $p$ is admissible and $\vec{e}$-large at $\vec{x}$ with order $n$, and $p$ passes the $e$-test at $x$ with order $n + 6$. Then there exists an admissible $q = (\sigma\tau,Y,\mathcal{D}) \leq p$ such that $q$ is $\vec{e}$-large at $\vec{x}$ and $\Phi_e(\sigma\tau; x) \downarrow = \varphi_x(x) \downarrow$.
\end{lemma}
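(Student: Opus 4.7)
The plan is to combine the hypothesis that $p$ passes the $e$-test at $x$ with Lemma \ref{lem:fgt} (to prune a fast-growing tree to a subtree with slightly slower growth) and then feed the pruned tree into the extension lemma, Lemma \ref{lem:rb-nPA-ext}. First, fix any $\vec{g} \in \mathcal{C}$; by admissibility of $p$, we have $f \in \mathcal{A}_{\sigma,X}$, so $f$ is an eligible choice for the parameter $h$ in the definition of passing the $e$-test. Applying the test with this $\vec{g}$ and $h = f$ produces a tree $T \in \mathcal{T}^X_R(n+6,\sigma,\vec{g}f)$ with
\[
    m_T Q > 2^{-3}, \qquad \text{where} \qquad Q = \{\tau \in \widehat{T} : \Phi_e(\sigma\tau; x) \downarrow = \varphi_x(x) \downarrow \}.
\]
Since $n + 6 = (n+3) + 3$ and $m_T Q \geq 2^{-3}$, Lemma \ref{lem:fgt} applied to $T$ and $Q$ delivers a subtree $S \in \mathcal{T}(n+3, |\sigma|, |\vec{g}f|)$ with $\widehat{S} \subseteq Q$. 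Since $S \subseteq T$, every node of $S$ lies in $[X]^{<\omega}$ and every $\sigma\tau$ with $\tau \in S$ is a $\vec{g}f$-rainbow, so $S \in \mathcal{T}^X_R(n+3,\sigma,f\vec{g})$, which is exactly the input required by Lemma \ref{lem:rb-nPA-ext}.

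It now suffices to apply Lemma \ref{lem:rb-nPA-ext} to the condition $p$, the sequence $\vec{g}$, the tree $S$, and the subset $P = \widehat{S}$, noting that $m_S P = 1 > 2^{-1}$. This produces a node $\tau \in \widehat{S}$ together with an admissible extension $q = (\sigma\tau, Y, \mathcal{D}) \leq p$ that is $\vec{e}$-large at $\vec{x}$. Since $\tau \in \widehat{S} \subseteq Q$, we automatically obtain $\Phi_e(\sigma\tau; x) \downarrow = \varphi_x(x) \downarrow$, which is precisely the additional conclusion required. The only delicate point is bookkeeping the indices — specifically, matching the $2^{-3}$ lower bound on mass coming from the $e$-test against the three-level drop in tree growth rate allowed by Lemma \ref{lem:fgt}, so that the resulting $S$ lands in exactly the class $\mathcal{T}^X_R(n+3,\sigma,f\vec{g})$ required by the extension lemma. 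Apart from this index check, the proof is a direct assembly of the pruning lemma and the extension lemma, and requires no new forcing machinery.
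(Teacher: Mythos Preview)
Your proof is correct and matches the paper's argument essentially line for line: pick $\vec{g}\in\mathcal{C}$, use $h=f$ in the $e$-test to get $T\in\mathcal{T}^X_R(n+6,\sigma,f\vec{g})$ with a $2^{-3}$-mass of good leaves, apply Lemma~\ref{lem:fgt} to prune to $S\in\mathcal{T}^X_R(n+3,\sigma,f\vec{g})$ whose leaves are all good, and then feed $S$ with $P=\widehat{S}$ into Lemma~\ref{lem:rb-nPA-ext}. The only cosmetic difference is that the paper calls your $S$ by $T'$ and writes $f\vec{g}$ where you write $\vec{g}f$.
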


\begin{proof}
Let $p = (\sigma,X,\mathcal{C})$ be as in the assumption. As $p$ passes the $e$-test at $x$ with order $n + 6$ and $f \in \mathcal{A}_{\sigma,X}$, we pick $\vec{g} \in \mathcal{C}$ and $T \in \mathcal{T}^X_R(n+6,\sigma,f\vec{g})$ such that
$$
    (P_{T} \tau > 2^{-3}) \Phi_e(\sigma\tau; x) \downarrow = \varphi_x(x) \downarrow.
$$
By Lemma \ref{lem:fgt}, there exists $T' \in \mathcal{T}^X_R(n+3,\sigma,f\vec{g})$ such that $\widehat{T'} \subseteq \widehat{T}$ and
$$
    \forall \tau \in \widehat{T'} \Phi_e(\sigma\tau; x) \downarrow = \varphi_x(x) \downarrow.
$$
By Lemma \ref{lem:rb-nPA-ext}, there exist $\tau \in \widehat{T'}$ and $q = (\sigma\tau,Y,\mathcal{D}) \leq p$ such that $q$ is admissible and $\vec{e}$-large at $\vec{x}$ and $\Phi_e(\sigma\tau; x) \downarrow = \varphi_x(x) \downarrow$.
\end{proof}

On the other hand, if $p$ does not pass the $e$-test with proper order, we attempt to force $\Phi_e(G)$ partial for generic rainbows almost everywhere. To this end, we find some $x$, and for each $i < 2$ force $\neg (\Phi_e(G; x) \downarrow = i)$ for generic rainbows with large probability. To get such $x$, we exploit the assumption that $X \oplus \mathcal{C}$ is non-PA.

For a condition $p = (\sigma,X,\mathcal{C})$, let $C(e,x,b,m,p)$ be the set below
$$
    \{\vec{g}h:  \vec{g} \in \mathcal{C}, h \in \mathcal{A}_{\sigma,X}, \forall T \in \mathcal{T}^X_R(m,\sigma,\vec{g}h) (P_{T} \tau \leq 2^{-3}) \Phi_e(\sigma\tau; x) \downarrow = b\}.
$$
If $\varphi_x(x) \downarrow = b$, then every $\vec{g}h \in C(e,x,b,m,p)$ witnesses that $p$ fails the $e$-test at $x$ with order $m$. Clearly, $C(e,x,b,m,p)$'s are uniformly $\Pi^0_1$ in $X \oplus \mathcal{C}$.

\begin{lemma}\label{lem:rb-nPA-C}
If $p$ is an admissible condition which fails the $e$-test with order $m$, then there exists $x$ such that neither $C(e,x,0,m,p)$ nor $C(e,x,1,m,p)$ is empty.
\end{lemma}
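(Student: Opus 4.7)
My plan is to argue the contrapositive: assuming that for every $x$ at least one of $C(e,x,0,m,p)$ and $C(e,x,1,m,p)$ is empty, I will construct an $X \oplus \mathcal{C}$-recursive PA function, directly contradicting the admissibility of $p$.

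Set $S_b = \{x : C(e,x,b,m,p) = \emptyset\}$ for $b < 2$. Since the classes $C(e,x,b,m,p)$ are uniformly $\Pi^0_1$ in $X \oplus \mathcal{C}$ as closed subsets of the compact coded product $\mathcal{C} \times \mathcal{A}_{\sigma,X}$, K\"onig's Lemma renders their emptiness uniformly $\Sigma^0_1$ in $X \oplus \mathcal{C}$; thus $S_0$ and $S_1$ are r.e.\ in $X \oplus \mathcal{C}$, and by hypothesis $S_0 \cup S_1 = \omega$. Enumerating both in parallel relative to $X \oplus \mathcal{C}$, I refine this covering into a partition $\omega = T_0 \sqcup T_1$ by placing $x$ into $T_b$ at the stage when it first enters $S_b$ (ties broken towards $T_0$). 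Then $T_0$ and $T_1$ are disjoint r.e.\ sets relative to $X \oplus \mathcal{C}$ whose union is all of $\omega$, so each is $X \oplus \mathcal{C}$-decidable, and $g(x) = b \Leftrightarrow x \in T_b$ defines an $X \oplus \mathcal{C}$-recursive function.

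To see that $g$ is PA, suppose $\varphi_x(x) \downarrow = b$. The failure of the $e$-test at $x$ delivers some $\vec{g}h \in \mathcal{C} \times \mathcal{A}_{\sigma,X}$ with $(P_T \tau \leq 2^{-3}) \Phi_e(\sigma\tau; x) \downarrow = b$ for every $T \in \mathcal{T}^X_R(m,\sigma,\vec{g}h)$; this $\vec{g}h$ witnesses $C(e,x,b,m,p) \neq \emptyset$, which forces $x \notin S_b \supseteq T_b$ and hence $g(x) = 1 - b \neq \varphi_x(x)$. A PA function recursive in $X \oplus \mathcal{C}$ contradicts admissibility, and the lemma follows. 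The main obstacle is pinning down the correct complexity of the sets $S_b$: one needs both that the ambient $\Pi^0_1$ class is compact (so that K\"onig's Lemma converts $\Pi^0_1$-emptiness into a $\Sigma^0_1$ event) and the standard trick that two disjoint r.e.\ sets whose union is $\omega$ are computable, which together upgrade the merely $\Sigma^0_1$ enumerations of $S_0$ and $S_1$ into an honest $X \oplus \mathcal{C}$-recursive choice function.
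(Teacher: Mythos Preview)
Your proof is correct and follows essentially the same approach as the paper: both argue by contraposition, use that emptiness of the $\Pi^0_1$ classes $C(e,x,b,m,p)$ is uniformly $\Sigma^0_1$ in $X \oplus \mathcal{C}$, and from the assumed covering $S_0 \cup S_1 = \omega$ extract an $X \oplus \mathcal{C}$-recursive selector $x \mapsto b_x$ that is then shown to be PA using the failure of the $e$-test. Your explicit partition into $T_0 \sqcup T_1$ is just a slightly more detailed packaging of the paper's one-line ``there exists an $X \oplus \mathcal{C}$-recursive function $x \mapsto b_x$ such that $(x,b_x) \in E$''.
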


\begin{proof}
Let $p = (\sigma,X,\mathcal{C})$. By the remark preceding the lemma, the set below is $\Sigma^0_1$ in $X \oplus \mathcal{C}$:
$$
    E =\{(x,b): C(e,x,b,m,p) = \emptyset\}.
$$

If the lemma fails, then there exists an $X \oplus \mathcal{C}$-recursive function $x \mapsto b_x$ such that $(x,b_x) \in E$ for all $x$. As $p$ does not pass the $e$-test with order $m$, if $\varphi_x(x) \downarrow$ then $\varphi_x(x) \downarrow \neq b_x$. Hence, $x \mapsto b_x$ is PA. So we have a contradiction with the admissibility of $p$.
\end{proof}

Now we can force $\Phi_e(G; x) \uparrow$.

\begin{lemma}\label{lem:rb-nPA-fail}
Let $p$ be an admissible condition which is $\vec{e}$-large at $\vec{x}$ with order $n$. If $p$ fails the $e$-test with order $n+6$, then there exist $x$ and an admissible $q \leq p$ such that $q$ is $\vec{e} \<e\>$-large at $\vec{x} \<x\>$.
\end{lemma}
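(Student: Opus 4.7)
The plan is to parallel the proof of Lemma~\ref{lem:rb-nPA-ext}, with the bad-extension trees $T_\tau$ redefined to witness failure of $\vec{e}\<e\>$-largeness at $\vec{x}\<x\>$ rather than merely $\vec{e}$-largeness at $\vec{x}$; the extra leverage needed to rule out $T_\tau \neq \emptyset$ for most $\tau$ is supplied by the two witnesses to the $e$-test failure that Lemma~\ref{lem:rb-nPA-C} produces. Applying Lemma~\ref{lem:rb-nPA-C}, I extract $x$ such that both $C(e,x,0,n+6,p)$ and $C(e,x,1,n+6,p)$ are non-empty, and pick witnesses $\vec{g}_b h_b$ in each. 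Fix $\vec{g} \in \mathcal{C}$, set $M = 11$, and consider trees $S \in \mathcal{T}^X_R(n+M, \sigma, f\vec{g}\vec{g}_0 h_0 \vec{g}_1 h_1)$; by (T1), any such $S$ lies in $\mathcal{T}^X_R(n+6, \sigma, \vec{g}_b h_b)$ for each $b < 2$, so the two $C$-bounds combine to give $(P_S \tau > 3/4)\Phi_e(\sigma\tau; x)\uparrow$. For each $\tau \in \widehat{S}$, let $T_\tau \in \mathcal{T}^X_R(n+M+|\tau|, \sigma\tau, \vec{g}\vec{g}_0 h_0 \vec{g}_1 h_1)$ be a tree with $(P_{T_\tau}\rho \geq 2^{-1})\Phi_{\vec{e}\<e\>}(\sigma\tau\rho; \vec{x}\<x\>)\downarrow$ if any exists, otherwise $T_\tau = \emptyset$.

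The key claim is $(P_S \tau < 2^{-2})(T_\tau \neq \emptyset)$. Supposing the contrary, gluing the $T_\tau$'s onto $S$ via (T2) gives $S' \in \mathcal{T}^X_R(n+M, \sigma, \vec{g}\vec{g}_0 h_0 \vec{g}_1 h_1)$ with $(P_{S'}\rho \geq 2^{-3})\Phi_{\vec{e}\<e\>}(\sigma\rho; \vec{x}\<x\>)\downarrow$, and pigeon-hole yields either (a) $(P_{S'}\rho \geq 2^{-4})\Phi_{\vec{e}}(\sigma\rho;\vec{x})\downarrow$, or (b) $(P_{S'}\rho \geq 2^{-4})\Phi_e(\sigma\rho;x)\downarrow$. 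In case (a), Lemma~\ref{lem:fgt} (combined with (T1)) extracts $S'' \in \mathcal{T}^X_R(n, \sigma, \vec{g})$ whose leaves all satisfy $\Phi_{\vec{e}}\downarrow$, contradicting the $\vec{e}$-largeness of $p$ at order $n$. In case (b), a further pigeon-hole on the value $b' \in \{0,1\}$ of $\Phi_e$ together with Lemma~\ref{lem:fgt} extracts $S'' \in \mathcal{T}^X_R(n+6, \sigma, \vec{g}_{b'} h_{b'})$ whose leaves all satisfy $\Phi_e(\sigma\rho;x) = b'$, contradicting $\vec{g}_{b'} h_{b'} \in C(e,x,b',n+6,p)$ since measure one exceeds $2^{-3}$.

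Having established $m_S\{\tau : T_\tau = \emptyset\} > 3/4$, I follow the proof of Lemma~\ref{lem:rb-nPA-ext} almost verbatim: Lemma~\ref{lem:fgt-rb} lets me pick, for each $z \in X \cap (\max\bar{S},\infty)$, some $\tau_z \in \widehat{S}$ with $T_{\tau_z} = \emptyset$ and $\sigma\tau_z\<z\>$ a rainbow for $f\vec{g}\vec{g}_0 h_0 \vec{g}_1 h_1$; applying Liu's Theorem~\ref{thm:Liu-partition} to the finite partition $z \mapsto \tau_z$ extracts $\tau \in \widehat{S}$ and infinite $Y$ with $\tau_z \equiv \tau$ on $Y$ and $Y \oplus \mathcal{C}$ of non-PA degree. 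Letting $\mathcal{D}$ be the $\Pi^{0,Y\oplus\mathcal{C}}_1$ set of sequences $\vec{h}'\vec{g}_0 h_0 \vec{g}_1 h_1$ with $\vec{h}' \in \mathcal{C} \cap \mathcal{A}^*_{\sigma\tau, Y}$ such that every $T \in \mathcal{T}^Y_R(n+M+|\tau|, \sigma\tau, \vec{h}'\vec{g}_0 h_0 \vec{g}_1 h_1)$ satisfies $(P_T \rho > 2^{-1})\Phi_{\vec{e}\<e\>}(\sigma\tau\rho; \vec{x}\<x\>)\uparrow$, I see that $\vec{g}\vec{g}_0 h_0 \vec{g}_1 h_1 \in \mathcal{D}$ (any violating tree over $Y \subseteq X$ would serve as $T_\tau$, contradicting $T_\tau = \emptyset$), so $q = (\sigma\tau, Y, \mathcal{D})$ is the desired admissible $\vec{e}\<e\>$-large extension, at order $n+M+|\tau|$.

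The main obstacle is the parameter bookkeeping in the two-case contradiction: the budget $M = 11$ is tight because Lemma~\ref{lem:fgt} must shrink from $n+M$ down to $n$ at measure $2^{-4}$ in case (a), and down to $n+6$ at measure $2^{-5}$ in case (b) (after the extra split on $b'$). Correspondingly, both $T_\tau$ and the sequences in $\mathcal{D}$ must carry $\vec{g}_0 h_0 \vec{g}_1 h_1$ in their rainbow requirements so that the (T1)-reductions legitimately land the shrunken trees in $\mathcal{T}^X_R(n+6, \sigma, \vec{g}_{b'} h_{b'})$; this is the only substantive deviation from the template of Lemma~\ref{lem:rb-nPA-ext}.
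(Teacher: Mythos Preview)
Your approach has a genuine gap concerning the admissibility of $q$. You pick specific witnesses $\vec{g}_b h_b \in C(e,x,b,n+6,p)$ (and a specific $\vec{g}\in\mathcal{C}$) and then hard-wire them into the definition of $\mathcal{D}$. But $C(e,x,b,n+6,p)$ is merely a $\Pi^{0,X\oplus\mathcal{C}}_1$ class; an individual member $\vec{g}_b h_b$ need not be recursive in $X\oplus\mathcal{C}$, and joining it to $X\oplus\mathcal{C}$ may well produce a set of PA degree. Consequently your $\mathcal{D}$ is only $\Pi^{0,\,Y\oplus\mathcal{C}\oplus\vec{g}_0 h_0\vec{g}_1 h_1}_1$, not $\Pi^{0,Y\oplus\mathcal{C}}_1$ as you assert, and you have no control over whether $Y\oplus\mathcal{D}$ is of non-PA degree. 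The relativized form of Liu's theorem you invoke only guarantees $\mathcal{C}\oplus Y$ non-PA; it says nothing about $\mathcal{C}\oplus Y\oplus \vec{g}_0 h_0\vec{g}_1 h_1$.

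The paper sidesteps this entirely, and in fact gives a much shorter argument: it does not extend $\sigma$, does not invoke Liu's theorem, and does not build any trees. One simply sets $q=(\sigma,X,\mathcal{D})$ with
\[
\mathcal{D}=\{\vec{g}_0\vec{g}_1 h_0 h_1\in\mathcal{A}^*_{\sigma,X}:\vec{g}_i h_i\in C(e,x,i,n+6,p)\text{ for }i<2\},
\]
which is $\Pi^{0,X\oplus\mathcal{C}}_1$ because the classes $C(e,x,i,n+6,p)$ themselves are, so $\mathcal{D}\le_T X\oplus\mathcal{C}$ and admissibility is immediate. Largeness with order $n+6$ is then a two-line calculation: for $\vec{h}=\vec{g}_0\vec{g}_1 h_0 h_1\in\mathcal{D}$ and $T\in\mathcal{T}^X_R(n+6,\sigma,\vec{h})$, the two $C$-bounds give $(P_T\tau\le 2^{-2})\Phi_e(\sigma\tau;x)\downarrow$, while $\vec{e}$-largeness of $p$ (via Lemma~\ref{lem:fgt}) gives $(P_T\tau>1-2^{-6})\Phi_{\vec{e}}(\sigma\tau;\vec{x})\uparrow$; combine. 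The point is that the $\Pi^0_1$ classes themselves, not chosen elements of them, must be carried into $\mathcal{D}$ --- that is precisely why conditions in this forcing have a closed-set component rather than a single coloring.
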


\begin{proof}
Let $p = (\sigma,X,\mathcal{C})$.

By Lemma \ref{lem:rb-nPA-C}, pick $x$ such that neither $C(e,x,0,n+6,p)$ nor $C(e,x,1,n+6,p)$ is empty. Let
$$
    \mathcal{D} = \{\vec{g}_0\vec{g}_1h_0h_1 \in \mathcal{A}^*_{\sigma,X}: \vec{g}_ih_i \in C(e,x,i,n+6,p) \text{ for } i < 2\}.
$$
By the choice of $x$, $\mathcal{D}$ is a non-empty closed subset of $\mathcal{A}^*_{\sigma,X}$. Moreover, $\mathcal{D} \leq_T X \oplus \mathcal{C}$.

So $q = (\sigma,X,\mathcal{D})$ is an admissible extension of $p$. To show that $q$ is $\vec{e}\<e\>$-large at $\vec{x}\<x\>$ with order $n+6$, fix arbitrary $\vec{h} = \vec{g}_0\vec{g}_1h_0h_1 \in \mathcal{D}$ and $T \in \mathcal{T}^X_R(n+6,\sigma,\vec{h})$. As $\mathcal{T}^X_R(n+6,\sigma,\vec{h}) \subseteq \mathcal{T}^X_R(n+6,\sigma,\vec{g}_ih_i)$ for $i < 2$,
$$
    (P_{T} \tau \leq 2^{-2}) \Phi_e(\sigma\tau; x) \downarrow.
$$
By the $\vec{e}$-largeness of $p$ and Lemma \ref{lem:fgt},
$$
    (P_{T} \tau > 1 - 2^{-6}) \Phi_{\vec{e}}(\sigma\tau; \vec{x}) \uparrow.
$$
Combining the above formulas,
$$
    (P_{T} \tau > 2^{-1}) \Phi_{\vec{e} \<e\>}(\sigma\tau; \vec{x} \<x\>) \uparrow.
$$
This proves the largeness of $q$.
\end{proof}

\begin{proof}[Construction of a rainbow] Recall that $f$ is a fixed coloring in $\mathcal{A}$ and our job is to construct an infinite $f$-rainbow which is of non-PA degree. By Lemmata \ref{lem:rb-nPA-pass} and \ref{lem:rb-nPA-fail}, we can find a sequence of admissible conditions $(p_i: i \in \omega)$ and sequences of tuples $(\vec{e}_i,\vec{x}_i: i \in \omega)$ such that
\begin{itemize}
    \item $p_0 = (\emptyset,\omega,\mathcal{A})$ and $p_{i+1} \leq p_i = (\sigma_i,X_i,\mathcal{C}_i)$ for each $i$,
    \item $\vec{e}_0 = \vec{x}_0 = \emptyset$ and each $p_i$ is $\vec{e}_i$-large at $\vec{x}_i$,
    \item either $p_i$ passes the $i$-test and $\Phi_i(\sigma_{i + 1}; x) \downarrow = \varphi_x(x) \downarrow$ for some $x$, or $\vec{e}_{i+1} = \vec{e}_i \<i\>$ and $\vec{x}_{i+1} = \vec{x}_i \<x\>$ for some $x$.
\end{itemize}

As in the proof of Theorem \ref{thm:non-PA-Coh}, $G = \bigcup_e \sigma_e$ is infinite. Hence, $G$ is a desired $f$-rainbow.
\end{proof}

So we prove Theorem \ref{thm:rb-nPA}.

\subsection{Rainbows with humble double jumps}\label{ss:2rb-low2}

In this subsection, we present a theorem on rainbows parallel to Theorem \ref{thm:coh-double-jump}. This theorem is the key step in a series of results leading to the separation of $\RRT^3_2$ and $\RRT^4_2$. Its proof heavily depends on the parallel Theorem \ref{thm:coh-double-jump} for cohesive sets (see Remark \ref{rmk:rb-dj}), and employs similar technique: working with large conditions from a Mathias-like forcing notion.

\begin{theorem}\label{thm:rb-double-jump}
If $X \gg \emptyset''$ then every $2$-bounded and $\emptyset'$-recursive coloring of pairs admits a rainbow $Z \in [\omega]^\omega$ such that $Z'' \leq_T X$.
\end{theorem}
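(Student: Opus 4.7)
The plan combines the fast-growing-tree forcing from Theorem \ref{thm:rb-nPA} (Section \ref{ss:2rb-nPA}) with the multiple-Mathias double-jump control from Theorem \ref{thm:coh-double-jump} (Section \ref{ss:coh-double-jump}). I first reduce to a normal coloring: by Lemma \ref{lem:tail-rainbow-random} any $1$-random relative to $f$ computes an infinite $1$-tail $f$-rainbow, and $\emptyset''$ (hence any $X \gg \emptyset''$, via a $\Pi^0_1$-class of positive measure relative to $\emptyset'$) computes a $1$-random relative to $\emptyset'$ and hence relative to $f$. After passing to this $1$-tail rainbow we may assume $f \in \mathcal{A}$; the tail-rainbow itself is computable from $\emptyset''$, so membership predicates like ``$f \in \mathcal{A}_{\sigma,Y}$'' remain uniformly decidable in $\emptyset''$ and the complexity bookkeeping still lands $Z''$ under $X$.

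The forcing conditions are multiple versions of those from Section \ref{ss:2rb-nPA}: $p = ((\sigma_\nu: \nu \in I), Y, \mathcal{C})$ where $I \subset 2^{<\omega}$ is a finite cross-section as in Section \ref{ss:coh-double-jump}, each $(\sigma_\nu, Y)$ is a Mathias condition, and $\mathcal{C}$ is a non-empty closed subset of $\bigcap_{\nu \in I} \mathcal{A}^*_{\sigma_\nu, Y}$ all of whose members have a common fixed length (so $\mathcal{C}$ can be coded effectively, as in Section \ref{ss:2rb-nPA}). Call $p$ \emph{low} if $Y \oplus \mathcal{C}$ is low. For tuples $(\vec{e}_\nu, \vec{x}_\nu : \nu \in I)$, declare $p$ to be $(\vec{e}_\nu : \nu \in I)$-large at $(\vec{x}_\nu : \nu \in I)$ with order $n$ if for every $\vec{g} \in \mathcal{C}$ and every choice $(T_\nu)_{\nu \in I}$ with $T_\nu \in \mathcal{T}^Y_R(n, \sigma_\nu, \vec{g})$, there exists at least one $\nu \in I$ for which $(P_{T_\nu} \tau > 2^{-1}) \Phi_{\vec{e}_\nu}(\sigma_\nu \tau; \vec{x}_\nu) \uparrow$. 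For low conditions this largeness is $\Pi^0_2$ in $Y \oplus \mathcal{C}$ and hence uniformly decidable from $\emptyset''$.

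The technical heart is an extension lemma patterned after Lemma \ref{lem:coh-dj-ext}: every low, $(\vec{e}_\nu : \nu \in I)$-large condition $p$ admits a low, large extension $q = ((\tau_\nu : \nu \in J), Z, \mathcal{D})$ with $J \not\subseteq I$, in which each non-terminal $\nu \in I$ is split into its two one-bit extensions in $J$, each new $\mu \in J - I$ either commits to a fresh $\Pi^0_2$ sentence $\Phi_{e_\mu}(G) \uparrow$ or has this sentence decided by a forced $\Sigma^0_2$ witness, and each terminal $\mu \in I \cap J$ is accompanied by a forced $\Sigma^0_2$ witness along $\mu$. I would prove it by iterating the pass/fail dichotomy of Lemmas \ref{lem:rb-nPA-pass} and \ref{lem:rb-nPA-fail} across the finitely many $\nu \in I$, using property (T2) to glue the resulting witness trees into a single fast-growing tree, Lemma \ref{lem:fgt} to thin the glued tree back to the correct growth rate, and the Low Basis Theorem to keep $Z \oplus \mathcal{D}$ low. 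Where the non-PA argument in Section \ref{ss:2rb-nPA} used Liu's Theorem \ref{thm:Liu-partition} to locate the branching coordinate (as in Lemma \ref{lem:rb-nPA-C}), we now instead use the cohesive-set double-jump apparatus of Theorem \ref{thm:coh-double-jump}, applied to the $\emptyset'$-recursive auxiliary sequence induced by $\mathcal{C}$ and $f$, to choose the branching uniformly from $\emptyset''$; this is the meaning of the dependence foreshadowed by Remark \ref{rmk:rb-dj}. I expect this extension lemma to be the main obstacle: ensuring that the entire procedure is uniformly $\emptyset''$-recursive while simultaneously preserving lowness of $Z \oplus \mathcal{D}$ is delicate, because the fast-growing-tree calculus must interlock cleanly with the $2^{<\omega}$-branching bookkeeping.

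Once the extension lemma is in place, starting from $p_0 = ((\emptyset), \omega, \mathcal{A})$ and iterating it in a $\emptyset''$-recursive manner against all indices $e$ yields a $\emptyset''$-recursive tree $T \subseteq 2^{<\omega}$ of conditions together with a $\emptyset''$-recursive labelling $\nu \mapsto (\sigma_\nu, \vec{e}_\nu, \vec{x}_\nu)$. Since $X \gg \emptyset''$, $X$ computes a path $W \in [T]$. Setting $G = \bigcup_{\nu \prec W} \sigma_\nu$ and arguing exactly as in Theorems \ref{thm:non-PA-Coh}, \ref{thm:coh-double-jump} and \ref{thm:rb-nPA}, $G$ is infinite, is an $f$-rainbow, and satisfies $\Phi_e(G)$ total if and only if $e \in \vec{e}_{W \uh (e+1)}$. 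Therefore $G'' \leq_T W \oplus \emptyset'' \leq_T X$, as required.
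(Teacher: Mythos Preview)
Your approach diverges from the paper's and, as written, has a genuine gap. The paper does \emph{not} run a single combined forcing: it first applies Theorem \ref{thm:coh-double-jump} to the $\emptyset'$-recursive sequence $R_{w,x}=\{y:f(w,y)=f(x,y)\}$ to obtain a cohesive $Y$ with $Y''\le_T X$, so that the induced coloring $g(m,n)=f(y_m,y_n)$ is \emph{stable} and $Y'$-recursive; only then does it invoke a separate forcing (Lemma \ref{lem:rb-double-jump}) producing a $\low_2$-over-$Y$ rainbow for the stable $g$. The role of Theorem \ref{thm:coh-double-jump} is thus purely a preprocessing step to obtain stability, not an oracle consulted inside the rainbow forcing.

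Your proposed largeness notion captures only the divergence side: you commit to $\Phi_{\vec{e}_\nu}(G;\vec{x}_\nu)\uparrow$ along some branch. But to control $G''$ you must also be able to \emph{commit to totality} of $\Phi_e(G)$ and make progress on that commitment at every stage. In the paper this is the point of the two-sided largeness (L1) and (L2) in \S\ref{ss:2rb-low2}: (L1) records partiality commitments, while (L2) guarantees that for any putative future $\vec{h}$ close to $f$, one can extend $\sigma$ within an $f\vec{g}$-rainbow tree so that $\Phi^*_{\vec{e}_1}$ converges. Your largeness has no analog of (L2), so you have no mechanism for the $\Pi^0_2$ (totality) side of the double-jump dichotomy; your extension lemma sketch speaks of ``committing to a fresh $\Pi^0_2$ sentence $\Phi_{e_\mu}(G)\uparrow$'', which is actually $\Sigma^0_2$, confirming the confusion.

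More seriously, you never address why the extension step can keep $f\in\mathcal{A}_{\sigma\tau,Y}$ while keeping $Y$ predictable. This is precisely the difficulty Remark \ref{rmk:rb-dj} flags: without stability, when you pass from $(\sigma,X)$ to $(\sigma\tau,Y)$ you cannot simply take $Y=\{x\in X:\sigma\tau\langle x\rangle\text{ is a }\vec{g}\text{-rainbow}\}$ and expect $f\in\mathcal{A}_{\sigma\tau,Y}$; the set of $x$ making $\sigma\tau\langle x\rangle$ an $f$-rainbow is $\emptyset'$-recursive but not low, so your ``low condition'' invariant breaks. The paper resolves this by first passing to a cohesive set, after which stability of $f$ makes the relevant $Y$ computable from $\vec{g}\oplus X$ (see the proofs of Lemmata \ref{lem:lowrb-large-ext} and \ref{lem:rb-dj-pass}). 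Your vague invocation of Theorem \ref{thm:coh-double-jump} ``applied to the $\emptyset'$-recursive auxiliary sequence induced by $\mathcal{C}$ and $f$'' does not explain what this sequence is or how its cohesive set would be threaded through a single forcing, and until that is made precise the lowness bookkeeping does not go through.
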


\begin{proof}
Fix $X \gg \emptyset''$ and a $2$-bounded $f: [\omega]^2 \to \omega$ recursive in $\emptyset'$. By Lemma \ref{lem:tail-rainbow-random}, we may assume that $f \in \mathcal{A}$. For $(w,x) \in [\omega]^2$, let
$$
    R_{w,x} = \{y: f(w,y) = f(x,y)\},
$$
and let $\vec{R} = (R_{w,x}: (w,x) \in [\omega]^2)$. By Theorem \ref{thm:coh-double-jump}, let $Y$ be an $\vec{R}$-cohesive set with $Y'' \leq_T X$. Let $(y_n: n \in \omega)$ enumerate $Y$ in a strictly increasing order. Define
$$
    g(m,n) = f(y_m,y_n).
$$
It follows that $g$ is $2$-bounded stable and $Y'$-recursive. By relativizing Lemma \ref{lem:rb-double-jump} below, we get a $g$-rainbow $R \in [\omega]^\omega$ such that $(Y \oplus R)'' \leq_T Y''$. Clearly, $Y \oplus R$ computes an infinite rainbow $Z$ for $f$ as desired.
\end{proof}

In the above proof, Theorem \ref{thm:coh-double-jump} helps in reducing $\emptyset'$-recursive $2$-bounded colorings to stable ones. We are left to prove Lemma \ref{lem:rb-double-jump} below. After we finish this job, we shall explain the importance of stability in Remark \ref{rmk:rb-dj}.

\begin{lemma}\label{lem:rb-double-jump}
Every $2$-bounded coloring, which is $\emptyset'$-recursive and stable, admits an infinite $\low_2$ rainbow.
\end{lemma}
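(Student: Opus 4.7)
The plan is to mirror the proof of Theorem \ref{thm:coh-double-jump}, substituting the rainbow Mathias forcing of \S\ref{ss:2rb-nPA} for the plain Mathias forcing used there. By Lemma \ref{lem:tail-rainbow-random}, we may assume $g \in \mathcal{A}$. The conditions are tuples $((\sigma_\nu)_{\nu \in I}, X, \mathcal{C})$, where $I \subset 2^{<\omega}$ is a cross-section of a labelling tree as in \S\ref{ss:coh-double-jump}, $X$ is low, and $\mathcal{C} \subseteq \bigcap_{\nu \in I} \mathcal{A}^*_{\sigma_\nu, X}$ is a shared $\Pi^{0,X}_1$-class of rainbow-preservation witnesses as in Definition \ref{def:rb-nPA-po}.

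Largeness for $(\vec{e}_\nu)_{\nu \in I}$ is defined by adapting Definition \ref{def:coh-dj-m-large} through the fast-growing rainbow trees of \S\ref{ss:fgt}: the condition is large if for every $\vec{g} \in \mathcal{C}$, every $\nu \in I$, and every $T \in \mathcal{T}^X_R(n, \sigma_\nu, f\vec{g})$ of sufficient depth, a positive fraction of leaves $\tau$ satisfy $\Phi_{\vec{e}_\nu}(\sigma_\nu\tau; \vec{x}_\nu) \uparrow$. Since $g$ is stable and $\emptyset'$-recursive and $X$ is low, largeness is $\Pi^0_2$ in a low parameter, hence $\emptyset''$-decidable. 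The heart of the argument is an extension lemma analogous to Lemma \ref{lem:coh-dj-ext}: any low, large condition has a low, large extension, $\emptyset''$-recursively obtainable, that either makes $\emptyset''$-verifiable progress toward a fresh $\Pi^0_2$ commitment on each branch (via the probability bounds of Lemma \ref{lem:fgt-rb}) or forces a $\Sigma^0_2$ sentence, terminating that branch. The proof uses Lemma \ref{lem:fgt} to prune fast-growing rainbow trees while preserving measure, Lemma \ref{lem:fgt-rb} to maintain rainbow probabilities, and the Low Basis Theorem to keep both $X$ and $\mathcal{C}$ low.

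Iterating the extension lemma yields a $\emptyset''$-recursive descending sequence of conditions whose labelled tree $T \subseteq 2^{<\omega}$ is $\emptyset''$-recursive. Any path $Y \in [T]$ produces an infinite rainbow $G = \bigcup_{\nu \prec Y} \sigma_\nu$ with $\Phi_e(G)$ total iff $e \in \vec{e}_{Y \uh (e+1)}$, so $G'' \leq_T Y \oplus \emptyset''$; choosing $Y$ so that $Y \oplus \emptyset'' \equiv_T \emptyset''$ then makes $G$ a $\low_2$ rainbow. The main obstacle is the rainbow analog of Lemma \ref{lem:largeness}(3): largeness must be preserved under $\leq^*_M$-extensions while $\mathcal{C}$ is shrunk to a suitable $\Pi^0_1$-subclass on each branch, and the fast-growing rainbow trees from different branches must be glued via property (T2) of \S\ref{ss:fgt} without destroying rainbow preservation. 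Stability of $g$ is essential here, since it makes the fast-growing rainbow trees along each branch extend $\emptyset'$-recursively in $X$, allowing the entire double-jump control to be carried out $\emptyset''$-recursively rather than requiring a PA-over-$\emptyset''$ oracle as in Theorem \ref{thm:coh-double-jump}.
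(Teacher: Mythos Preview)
Your proposal has a genuine gap that would prevent it from reaching $\low_2$. You mimic the tree construction of Theorem \ref{thm:coh-double-jump}, building a $\emptyset''$-recursive labelling tree $T \subseteq 2^{<\omega}$ and then asserting that one can ``choose $Y$ so that $Y \oplus \emptyset'' \equiv_T \emptyset''$.'' But a $\emptyset''$-recursive infinite binary tree need not have a $\emptyset''$-recursive path; this is exactly why Theorem \ref{thm:coh-double-jump} requires an oracle $P \gg \emptyset''$ and only yields $\low_3$ cohesive sets. Nothing in your sketch explains why stability of $g$ would collapse the tree to a single branch or otherwise produce a $\emptyset''$-computable path. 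A second, related problem is that the binary branching in \S\ref{ss:coh-double-jump} has concrete content --- at level $n$ the two children record whether the cohesive set eventually lives in $R_n$ or in $\omega - R_n$ --- whereas for rainbows there is no such forced dichotomy, so it is unclear what your nodes $\nu \in 2^{<\omega}$ are meant to encode.

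The paper's proof avoids trees altogether. Conditions are simply $(\sigma, X, \vec{g})$ with $\vec{g}$ a fixed finite tuple of auxiliary colorings (no $\Pi^0_1$-class $\mathcal{C}$ is needed, since $f$ is known). The decisive idea is a \emph{two-sided} largeness notion: a condition is $(\vec{e}_0,\vec{e}_1)$-large at $\vec{x}$ if it satisfies both a clause (L1) forcing $\Phi_{e}(G)$ partial for $e \in \vec{e}_0$ and a clause (L2) guaranteeing that one can always extend so as to make $\Phi^*_{\vec{e}_1}(\cdot;x)$ converge. Your largeness only captures the (L1) half. With (L2) in hand, an $e$-test (Lemmata \ref{lem:rb-dj-pass} and \ref{lem:rb-dj-fail}) decides outright whether to append $e$ to $\vec{e}_0$ or to $\vec{e}_1$, so the construction is a single $\emptyset''$-recursive descending sequence of low conditions, and $G = \bigcup_n \sigma_n$ is $\low_2$ directly. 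Stability of $f$ is used not to eliminate a tree, but to keep the (L2) clause at the $\Pi^{\vec{g} \oplus X}_2$ level (see Remark \ref{rmk:rb-dj}): it lets one write $\mathcal{T}^{X \cap (c,\infty)}_R(\cdots)$ rather than $\mathcal{T}^Y_R(\cdots)$ for an unpredictable $Y$, which is what makes the whole largeness test $\emptyset''$-decidable.
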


Below we prove Lemma \ref{lem:rb-double-jump}. We fix $f$ as in Lemma \ref{lem:rb-double-jump} and construct a $\low_2$ rainbow by forcing. By Lemma \ref{lem:tail-rainbow-random}, we may assume that $f \in \mathcal{A}$. The forcing argument goes as following:
\begin{enumerate}
    \item We define the forcing notion and related largeness. The largeness notion here is similar to that in \S \ref{ss:coh-double-jump}, in that we can force $\Pi^0_2$ sentences progressively by working with large conditions.
    \item The technique here is similar to that in \S \ref{ss:2rb-nPA}, in that fast-growing trees play important role. As in \S \ref{ss:2rb-nPA}, we should take a measure theoretic viewpoint.
    \item We prove Lemma \ref{lem:lowrb-large-ext}, which is an analog of Lemma \ref{lem:rb-nPA-ext} and allows us to extend large conditions.
    \item Then we prove Lemma \ref{lem:rb-dj-totality}, which allows us to extend a large condition and simultaneously make some progress for a $\Pi^0_2$ commitment (i.e., the totality of some $\Phi_e(G)$ where $G$ is a generic rainbow), as Lemma \ref{lem:coh-dj-ext}(6) did.
    \item We prove Lemmata \ref{lem:rb-dj-pass} and \ref{lem:rb-dj-fail}, which together guarantee that we have densely many chances to decide a $\Pi^0_2$ sentence. To this end, we design a test. If a condition $p$ passes the test for some $e$ properly, then it can be extended to a condition forcing a $\Sigma^0_2$ sentence ($\Phi_e(G)$ is partial), by Lemma \ref{lem:rb-dj-pass}; otherwise we can commit to force a $\Pi^0_2$ sentence ($\Phi_e(G)$ is total), by Lemma \ref{lem:rb-dj-fail}.
\end{enumerate}
So, the forcing argument here combines techniques from \S \ref{ss:coh-double-jump} and \S \ref{ss:2rb-nPA}.

\begin{definition}
A condition is a triple $(\sigma, X, \vec{g})$ such that $(\sigma,X)$ is a Mathias condition and $f\vec{g} \in \mathcal{A}^*_{\sigma,X}$. Given two conditions $(\sigma, X, \vec{g})$ and $(\tau, Y, \vec{h})$,
$$
    (\sigma, X, \vec{g}) \geq (\tau, Y, \vec{h}) \Leftrightarrow (\tau, Y) \leq_M (\sigma, X) \text{ and } \vec{g} \subseteq \vec{h}.
$$
A condition $(\sigma, X, \vec{g})$ is \emph{low} if and only if $\vec{g} \oplus X \in \low$.
\end{definition}

A condition $p = (\sigma, X, \vec{g})$ is meant to represent the set
$$
    S(p) = \{Y \in [X]^\omega: \sigma \cup Y \text{ is a rainbow for } \vec{g}\}.
$$
If $p \geq q$ then $S(p) \supseteq S(q)$.

Given a condition $p = (\sigma,X,\vec{g})$ and a triple of sequences $(\vec{e}_0,\vec{e}_1,\vec{x})$ with $|\vec{e}_0| = |\vec{x}|$, $p$ is \emph{$(\vec{e}_0,\vec{e}_1)$-large at $\vec{x}$}, if there exists a \emph{largeness witness} $(n, d)$ such that
\begin{enumerate}
    \item[(L1)] for all $S \in \mathcal{T}^X_R(n,\sigma,\vec{g})$, $(P_{S} \tau > 2^{-1}) \Phi_{\vec{e}_0}(\sigma \tau; \vec{x}) \uparrow$;
    \item[(L2)] for all $S \in \mathcal{T}^X_R(n,\sigma,f \vec{g})$, $m \geq n$, $x$, $c > \max \bar{S}$ and $\vec{h} \in \mathcal{A}^*_{\sigma, X}$, if
        \begin{equation}\label{eq:lowrb-L2-1}
            \forall y \in Y (P_{S} \tau < 2^{-d}) (\sigma\tau\<y\> \text{ is not a rainbow for } \vec{g}\vec{h})
        \end{equation}
        where $Y = X \cap (c,\infty)$, then
        \begin{equation}\label{eq:lowrb-L2-2}
            (P_{S} \tau > 2^{-1}) \exists T \in \mathcal{T}^Y_R(m + |\tau|,\sigma\tau,\vec{g}\vec{h}) (P_{T} \rho > 2^{-1}) \Phi^*_{\vec{e}_1}(\sigma\tau\rho; x) \downarrow.
        \end{equation}
\end{enumerate}
Note that, if $(\vec{x}, n, d)$ is fixed, then $(\vec{e}_0,\vec{e}_1)$-largeness is a $\Pi^{\vec{g} \oplus X}_2$ property of $p$. In addition, if $(n,d)$ is a largeness witness, then $(n+1,d)$ is also a largeness witness. So, we always pick largeness witnesses $(n,d)$ with $n > d$.

Taking a measure theoretic viewpoint and with the help of Lemma \ref{lem:fgt}, (L1) roughly means that $\Phi_{\vec{e}_0}(R; \vec{x}) \uparrow$ for almost all $\vec{g}$-rainbows $R \in B(\sigma, X)$. In (L2), \eqref{eq:lowrb-L2-1} means that $\vec{h}$ looks closed to $f$, by Lemma \ref{lem:fgt-rb}. As we intend to find $f$-rainbows, this is a reasonable condition. \eqref{eq:lowrb-L2-2} means that $\sigma$ can likely be extended to some $f\vec{g}$-rainbow $R \in B(\sigma,X)$ so that $\Phi^*_{\vec{e}_1}(R;x) \downarrow$. So, if we work with large conditions, then we can keep $\Phi_e(G)$ partial for all $e \in \vec{e}_0$, and make $\Phi_e(G)$ total for all $e \in \vec{e}_1$, for a generic rainbow $G$.

For technical convenience, below we introduce some variants of (L1) and (L2).

By Lemma \ref{lem:fgt}, (L1) and (L2) respectively imply the following formulations:
\begin{enumerate}
    \item[(L1')] for all $S \in \mathcal{T}^X_R(n+l,\sigma,\vec{g})$, $(P_{S} \tau > 1 - 2^{-l}) \Phi_{\vec{e}_0}(\sigma\tau; \vec{x}) \uparrow$.
    \item[(L2')] for all $S \in \mathcal{T}^X_R(n+l,\sigma,f \vec{g})$, $m \geq n+l$, $x$, $c > \max \bar{S}$ and $\vec{h} \in \mathcal{A}^*_{\sigma, X}$, if
        $$
            \forall y \in Y (P_{S} \tau < 2^{-d-l}) (\sigma\tau\<y\> \text{ is not a rainbow for } \vec{g}\vec{h})
        $$
        where $Y = X \cap (c,\infty)$, then
        $$
            (P_{S} \tau > 1 - 2^{-l+1}) \exists T \in \mathcal{T}^Y_R(m + |\tau|,\sigma\tau,\vec{g}\vec{h}) \forall \rho \in \widehat{T} \Phi^*_{\vec{e}_1}(\sigma\tau\rho; x) \downarrow.
        $$
\end{enumerate}
The implication from (L1) to (L1') is easy. To see that (L2) implies (L2'), let $S \in \mathcal{T}^X_R(n+l,\sigma,f \vec{g})$, $m \geq n+l$, $x$, $c > \max \bar{S}$ and $\vec{h} \in \mathcal{A}^*_{\sigma, X}$ be such that
$$
    (P_{S} \tau \leq 1 - 2^{-l+1}) \exists T \in \mathcal{T}^Y_R(m + |\tau|,\sigma\tau,\vec{g}\vec{h}) \forall \rho \in \widehat{T} \Phi^*_{\vec{e}_1}(\sigma\tau\rho; x) \downarrow,
$$
where $Y = X \cap (c,\infty)$. By Lemma \ref{lem:fgt}, there exists $S_1 \in \mathcal{T}^X_R(n, \sigma, f\vec{g})$ such that $\widehat{S_1} \subseteq \widehat{S}$, $m_S \widehat{S_1} \geq 2^{-l}$ and
$$
    \forall \tau \in \widehat{S_1} \forall T \in \mathcal{T}^Y_R(m + |\tau|,\sigma\tau,\vec{g}\vec{h}) \exists \rho \in \widehat{T} \neg \Phi^*_{\vec{e}_1}(\sigma\tau\rho; x) \downarrow.
$$
By Lemma \ref{lem:fgt} again,
$$
    \forall \tau \in \widehat{S_1} \forall T \in \mathcal{T}^Y_R(m + |\tau| + 1,\sigma\tau,\vec{g}\vec{h}) (P_{T} \rho < 2^{-1}) \Phi^*_{\vec{e}_1}(\sigma\tau\rho; x) \downarrow.
$$
But, if (L2) holds, then there exists $y \in Y$ such that
$$
    (P_{S_1} \tau \geq 2^{-d}) (\sigma\tau\<y\> \text{ is not a rainbow for } \vec{g}\vec{h}).
$$
As $m_S \widehat{S_1} \geq 2^{-l}$, $(P_{S} \tau \geq 2^{-d-l}) (\sigma\tau\<y\> \text{ is not a rainbow for } \vec{g}\vec{h})$.

The following condition implies (L2):
\begin{enumerate}
    \item[(L2'')] for all $S \in \mathcal{T}^X_R(n,\sigma,f \vec{g})$, $m \geq n$, $x$, $c > \max \bar{S}$ and $\vec{h} \in \mathcal{A}^*_{\sigma, X}$, if
        $$
            \forall y \in Y (P_{S} \tau < 2^{-d}) (\sigma\tau\<y\> \text{ is not a rainbow for } \vec{g}\vec{h})
        $$
        where $Y = X \cap (c,\infty)$, then there exist $(T_\tau \in \mathcal{T}^Y_R(m+|\tau|,\sigma\tau,\vec{g}\vec{h}): \tau \in \widehat{S})$ and $S' = \{\xi: \xi \in S \text{ or } \exists \tau \in \widehat{S}, \rho \in T_\tau(\xi = \tau\rho)\}$ with
        $$
            (P_{S'} \tau > 2^{-2} 3) \Phi^*_{\vec{e}_1}(\sigma\tau; x) \downarrow.
        $$
\end{enumerate}
To see the implication, let $S, m, x, c, \vec{h}$ witness the failure of (L2). By Lemma \ref{lem:fgt},
$$
    (P_{S} \tau \leq 2^{-1}) \exists T_\tau \in \mathcal{T}^Y_R(m+2+|\tau|,\sigma\tau,\vec{g}\vec{h}) (P_{T_\tau} \rho > 2^{-2}) \Phi^*_{\vec{e}_1}(\sigma\tau\rho; x) \downarrow.
$$
If we construct $S'$ as in (L2'') for $S, m+2, x, c, \vec{h}$, then we can only have
$$
    (P_{S'} \tau \leq 2^{-2} 3) \Phi^*_{\vec{e}_1}(\sigma\tau; x) \downarrow.
$$
In other words, $S, m+2, x, c, \vec{h}$ witness the failure of (L2'').

\begin{lemma}\label{lem:lowrb-large-ext}
Let $(\sigma,X,\vec{g})$ be low and $(\vec{e}_0,\vec{e}_1)$-large at $\vec{x}$ with witness $(n,d)$ and $S \in \mathcal{T}^X_R(n + 4, \sigma, f\vec{g})$. For each $\tau \in \widehat{S}$, let $X_\tau = \{x \in X: \sigma\tau\<x\> \text{ is a rainbow for } \vec{g}\}$. Then, for some $c$,
\begin{equation}\label{eq:lowrb-large-ext1}
    (P_{S} \tau > 2^{-4}) [(\sigma\tau, X_\tau \cap (c,\infty), \vec{g}) \text{ is $(\vec{e}_0,\vec{e}_1)$-large at $\vec{x}$}].
\end{equation}
Thus, if $S \in \mathcal{T}^X_R(n + l, \sigma, f\vec{g})$ and $l > 4$ then for some $c$,
\begin{equation}\label{eq:lowrb-large-ext2}
    (P_{S} \tau > 1 - 2^{- l + 4}) [(\sigma\tau,X_\tau \cap (c,\infty),\vec{g}) \text{ is $(\vec{e}_0,\vec{e}_1)$-large at $\vec{x}$}].
\end{equation}

Moreover, it is uniformly $\emptyset''$-recursive to obtain $(\tau,c)$ and a lowness index of $\vec{g} \oplus X_\tau$ from $(\sigma,S,\vec{e}_0,\vec{e}_1,\vec{x},n,d,l)$ and a lowness index of $\vec{g} \oplus X$, so that $(\sigma\tau,X_\tau \cap (c,\infty),\vec{g})$ is $(\vec{e}_0,\vec{e}_1)$-large at $\vec{x}$. It is also $\emptyset''$-recursive to get $c$ so that \eqref{eq:lowrb-large-ext1} or \eqref{eq:lowrb-large-ext2} holds.
\end{lemma}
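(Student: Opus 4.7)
The plan is to prove \eqref{eq:lowrb-large-ext1} by a contradiction-and-gluing argument, derive \eqref{eq:lowrb-large-ext2} from it via Lemma \ref{lem:fgt}, and verify effectiveness by inspection. For \eqref{eq:lowrb-large-ext1}, I would fix $c > \max \bar{S}$ and, as a uniform candidate largeness witness for each extended condition $(\sigma\tau, X_\tau \cap (c,\infty), \vec{g})$, propose $(n',d')=(n+4+h,d)$, where $h$ bounds the depth of $\widehat{S}$. Suppose toward contradiction that the set $U \subseteq \widehat{S}$ on which this candidate fails to witness $(\vec{e}_0,\vec{e}_1)$-largeness satisfies $m_S U \geq 1 - 2^{-4}$. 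Partition $U = U_1 \cup U_2$ according as (L1) or (L2) fails, so that some $U_i$ has $m_S U_i \geq (1-2^{-4})/2 > 2^{-3}$ by the union bound.

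The (L1)-case is the cleaner half. For each $\tau \in U_1$ pick a witness $T_\tau \in \mathcal{T}^{X_\tau \cap (c,\infty)}_R(n', \sigma\tau, \vec{g})$ with $m_{T_\tau}\{\rho : \Phi_{\vec{e}_0}(\sigma\tau\rho;\vec{x}) \downarrow\} \geq 2^{-1}$; since $n' \geq n + 4 + |\tau|$, property (T2) lets us glue these onto $S$ to build $S_1 \in \mathcal{T}^X_R(n+4, \sigma, \vec{g})$ (using that every $f\vec{g}$-rainbow is a $\vec{g}$-rainbow, and setting $T_\tau = \emptyset$ for $\tau \notin U_1$). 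Chaining the probability quantifiers then yields $m_{S_1}\{\xi : \Phi_{\vec{e}_0}(\sigma\xi;\vec{x}) \downarrow\} \geq m_S U_1 \cdot 2^{-1} > 2^{-4}$, which contradicts (L1') of $(\sigma,X,\vec{g})$ at $l = 4$.

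The (L2)-case is the principal technical obstacle: each $\tau \in U_2$ yields a counter-witness $(S'_\tau, m_\tau, x_\tau, c'_\tau, \vec{h}_\tau)$ whose parameters depend on $\tau$. To pass to a uniform counter-witness against (L2'') of $(\sigma, X, \vec{g})$, I would first restrict to a subset of $U_2$ of $m_S$-measure still exceeding some positive threshold on which $(m_\tau, x_\tau, c'_\tau)$ are constant (by finite pigeonhole, since only finitely many relevant values arise at this finite stage), and replace the $\vec{h}_\tau$'s with the common extension $\vec{h}^{\ast} = \bigcup_\tau \vec{h}_\tau \in \mathcal{A}^*_{\sigma,X}$; this only strengthens the (L2)-hypothesis, which still holds. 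Then I would glue the $S'_\tau$'s onto $S$ via (T2) and perform a measure-theoretic bookkeeping paralleling the (L1)-case, using the reformulation (L2''), to contradict (L2) of $(\sigma, X, \vec{g})$.

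For \eqref{eq:lowrb-large-ext2}, if the set $P \subseteq \widehat{S}$ on which no $c$ makes the extended condition large has $m_S P \geq 2^{-l+4}$, applying Lemma \ref{lem:fgt} with $m = l-4$ produces $S' \in \mathcal{T}^X_R(n+4, \sigma, f\vec{g})$ with $\widehat{S'} \subseteq P$, and then \eqref{eq:lowrb-large-ext1} applied to $S'$ yields some $\tau \in \widehat{S'} \subseteq P$ for which some $c$ makes the extended condition large, a contradiction. Effectiveness then follows because $(\vec{e}_0,\vec{e}_1)$-largeness of the extended condition is a $\Pi^{\vec{g}\oplus X_\tau}_2$ predicate, and a lowness index of $\vec{g}\oplus X_\tau$ is uniformly computable from $\tau$ and a lowness index of $\vec{g}\oplus X$; thus $\emptyset''$ can decide largeness, and a finite search over $(\tau,c)$ outputs the desired witness along with the lowness index.
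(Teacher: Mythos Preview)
Your overall structure is right and the (L1) half and the derivation of \eqref{eq:lowrb-large-ext2} from \eqref{eq:lowrb-large-ext1} via Lemma~\ref{lem:fgt} match the paper. But the (L2) half has a genuine gap, and you are also missing a preliminary step.

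First, you never use the stability of $f$. Before anything else, the paper passes to a threshold $\bar c$ (found $\emptyset''$-recursively) so that for $y > \bar c$ the relation ``$\sigma\tau\<y\>$ is an $f$-rainbow'' no longer depends on $y$; only then can one guarantee $f \in \mathcal{A}_{\sigma\tau, X_\tau}$ and $X_\tau$ infinite on a set of $\tau$'s of measure $> 1 - 2^{-n-4}$. Without this, $(\sigma\tau, X_\tau \cap (c,\infty), \vec{g})$ need not even be a condition.

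Second, your pigeonhole step does not work. The set $U_2 \subseteq \widehat{S}$ is finite, but each $\tau \in U_2$ may carry a \emph{different} triple $(m_\tau, x_\tau, c'_\tau)$; there is no bound on the number of values independent of $|U_2|$, so restricting to a subset on which these are constant can lose all the measure. The paper does not try to make parameters constant: it simply takes $m = \max_j m_j$ and $x = \max_j x_j$ at the end.

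Third, and more seriously, the naive union $\vec{h}^\ast = \bigcup_\tau \vec{h}_\tau$ does not do what you want. Each $\vec{h}_\tau$ lies only in $\mathcal{A}^*_{\sigma\tau, X_\tau \cap (c,\infty)}$, not in $\mathcal{A}^*_{\sigma, X}$, so $\vec{h}^\ast$ need not be admissible at the level of $(\sigma, X, \vec{g})$. Moreover, the (L2)-hypothesis you have is about leaves of $S'_\tau$ above $\tau$, not about leaves of $S$, so ``this only strengthens the (L2)-hypothesis'' is false as stated. The paper's key trick is a \emph{sequential} construction: it enumerates the bad $\tau_j$'s so that $c_{j-1} < \min \bar S_j < \max \bar S_j < c_j$, and then from each $h \in \vec{h}_j$ it manufactures a new coloring $h'$ which agrees with $h$ on the interval $[c_{j-1}, c_j-1] \cup (c,\infty)$ and equals $f$ elsewhere. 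This interval structure is precisely what forces $\vec{h}' \in \mathcal{A}^*_{\sigma,X}$ and makes the glued hypothesis of (L2') hold at the level of $(\sigma, X, \vec{g})$; the contradiction then comes from comparing the (L2') conclusion with the glued failure witnesses. Your proposal contains no analogue of this modification of the $\vec{h}_\tau$'s, and without it the gluing cannot be pushed down to $(\sigma, X, \vec{g})$.

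Finally, the paper's largeness witness for the extensions is $(n+4+|\tau|, d+4)$, not $(n+4+h, d)$; the shift in $d$ is needed for the bookkeeping in the second claim.
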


\begin{proof}
Let $(n,d)$ be a largeness witness of $(\sigma,X,\vec{g})$. We may assume that $n > d$.

We prove the lemma by some calculations of probabilities. Firstly, we calculate the probability to have $f\vec{g} \in \mathcal{A}^*_{\sigma\tau,X_\tau}$ and $X_\tau$ infinite. Secondly, we calculate the chance to have $(\sigma\tau, X_\tau, \vec{g})$ satisfying (L1). Then we estimate the probability to have $(\sigma\tau, X_\tau \cap (c,\infty), \vec{g})$ satisfying (L2).

By Lemma \ref{lem:fgt-rb},
$$
    y \in X \cap (\max \bar{S},\infty) \to (P_{S} \tau > 1 - 2^{-n-4}) (\sigma\tau\<y\> \text{ is a rainbow for } f\vec{g}).
$$
By the stability of $f$, there exists $\bar{c} > \max \bar{S}$ such that
$$
    \sigma\tau\<y\> \text{ is a rainbow for } f \leftrightarrow \sigma\tau\<\bar{c}\> \text{ is a rainbow for } f
$$
for all $\tau \in \widehat{S}$ and $y \in X \cap (\bar{c},\infty)$. So we may replace each $X_\tau$ with $X_\tau \cap (\bar{c},\infty)$ and assume that
$$
    (P_{S} \tau > 1 - 2^{-n-4}) (X_\tau \text{ is infinite and } f\vec{g} \in \mathcal{A}^*_{\sigma\tau, X_\tau}).
$$
Let $Y = X \cap (\bar{c},\infty)$.

\begin{claim}
$(P_{S} \tau < 2^{-3}) \exists T \in \mathcal{T}^Y_R(n + 4 + |\tau|, \sigma\tau, \vec{g}) (P_{T} \rho \geq 2^{-1}) \Phi_{\vec{e}_0}(\sigma\tau\rho; \vec{x}) \downarrow$.
\end{claim}

\begin{proof}
For $\tau \in \widehat{S}$, let $T_\tau$ be some $T \in \mathcal{T}^Y_R(n+4+|\tau|, \sigma\tau, \vec{g})$ such that
$$
    (P_{T} \rho \geq 2^{-1}) \Phi_{\vec{e}_0}(\sigma\tau\rho; \vec{x}) \downarrow,
$$
or $T_\tau = \emptyset$ if there is no such $T$. Let
$$
    S_1 = \{\xi: \xi \in S \text{ or } \exists \tau \in \widehat{S}, \rho \in T_\tau(\xi = \tau\rho)\}.
$$
Suppose that the claim fails, then
$$
    (P_{S_1} \tau \geq 2^{-4}) \Phi_{\vec{e}_0}(\sigma\tau; \vec{x}) \downarrow.
$$
As $S_1 \in \mathcal{T}^X_R(n+4, \sigma, \vec{g})$, by Lemma \ref{lem:fgt} there exists $S_2 \in \mathcal{T}^X_R(n,\sigma,\vec{g})$ with $\Phi_{\vec{e}_0}(\sigma\tau; \vec{x}) \downarrow$ for all $\tau \in \widehat{S_2}$, contradicting the $(\vec{e}_0,\vec{e}_1)$-largeness of $(\sigma,X,\vec{g})$.
\end{proof}

We define a maximal $(\tau_j \in \widehat{S}: j < k)$ with $(S_j, m_j, c_j, x_j, \vec{h}_j, Y_j: j < k)$ such that
\begin{enumerate}
    \item $\bar{c} = c_{-1} \leq c_{j-1} < \min \bar{S}_j < \max \bar{S}_j < c_j$,
    \item $Y_j = X_{\tau_j} \cap (c_{j-1}, \infty)$ is infinite and $f\vec{g}\vec{h}_j \in \mathcal{A}^*_{\sigma\tau_j, Y_j}$,
    \item $S_j \in \mathcal{T}^{Y_j}_R(n+4+|\tau_j|,\sigma\tau_j,f\vec{g})$,
    \item $(P_{S_j} \rho < 2^{-d-4})(\sigma\tau_j\rho\<y\> \text{ is not a rainbow for } \vec{g}\vec{h}_j)$ for all $y \in Y_{j} \cap (c_j,\infty)$,
    \item $m_j \geq n + 4 + |\tau_j|$ and
    $$
        (P_{S_j} \rho \geq 2^{-1}) \forall T \in \mathcal{T}^{Y_j \cap (c_j,\infty)}_R(m_j+|\rho|,\sigma\tau_j\rho,\vec{g}\vec{h}_j) (P_{T} \xi \leq 2^{-1})  \Phi^*_{\vec{e}_1}(\sigma\tau_j\rho\xi;x_j) \downarrow.
    $$
\end{enumerate}
So, $(S_j, m_j, c_j, x_j, \vec{h}_j)$ witnesses that $(\sigma\tau_j, Y_j, \vec{g})$ fails to satisfy (L2) for a largeness witness $(n+4+|\tau_j|, d+4)$. By the maximality of $(\tau_j: j < k)$, if $\tau \in \widehat{S} - (\tau_j: j < k)$, then either $X_\tau$ is finite, $f \not\in \mathcal{A}_{\sigma\tau,X_\tau}$ or $(\sigma\tau, X_\tau \cap (c,\infty), \vec{g})$ satisfies (L2) for $(\vec{e}_0,\vec{e}_1)$, where $c = \max \{c_j: j < k\}$.

\begin{claim}
$m_S (\tau_j: j < k) < 2^{-2} 3$.
\end{claim}

\begin{proof}
Let $x = \max \{x_j: j < k\}$ and $m = \max \{m_j: j < k\}$. Let
$$
    S' = \{\xi: \xi \in S \text{ or } \exists j < k, \rho \in S_j(\xi = \tau_j \rho)\}.
$$
For $h \in \vec{h}_j$ let
$$
    h'(x,y) =
    \left\{
      \begin{array}{ll}
        \<w,y\>, & x \in [c_{j-1}, c_j - 1] \cup (c,\infty); \\
        f(x,y), & \text{otherwise}
      \end{array}
    \right.
$$
where $w = \min\{u \in [c_{j-1}, c_j - 1] \cup (c,\infty): h(u,y) = h(x,y)\}$.

Let $\vec{h}'$ be the collection of all $h'$'s above. It follows that $\vec{h}' \in \mathcal{A}^*_{\sigma,X}$. For each $\tau \in S$ and $y > c$, $\sigma\tau\<y\>$ is a $\vec{g}\vec{h}'$-rainbow if and only if it is an $f\vec{g}$-rainbow. For $y > c$ and $\xi = \tau_j\rho \in \widehat{S'}$ where $j < k$ and $\rho \in \widehat{S_j}$, if $\sigma\xi\<y\>$ is a $f\vec{g}\vec{h}_j$-rainbow then it is a $\vec{g}\vec{h}'$-rainbow. It follows that
$$
    \forall y \in X \cap (c,\infty) (P_{S'} \tau < 2^{-d-3}) (\sigma\tau\<y\> \text{ is not a rainbow for } \vec{g}\vec{h}').
$$
By the $(\vec{e}_0,\vec{e}_1)$-largeness of $(\sigma,X,\vec{g})$ and (L2'),
$$
    (P_{S'} \tau > 1 - 2^{-2}) \exists T \in \mathcal{T}^{X \cap (c,\infty)}_R(m+|\tau|,\sigma\tau,\vec{g}\vec{h}') \forall \rho \in \widehat{T} \Phi^*_{\vec{e}_1}(\sigma\tau\rho; x) \downarrow.
$$
Note that $\mathcal{T}^{X \cap (c,\infty)}_R(m+|\tau|,\sigma\tau,\vec{g}\vec{h}') = \mathcal{T}^{Z_\tau}_R(m+|\tau|,\sigma\tau,\vec{g}\vec{h}')$ where
$$
    Z_\tau = \{z \in X \cap (c,\infty): \sigma\tau\<z\> \text{ is a $\vec{g}$-rainbow}\},
$$
thus $(P_{S'} \tau > 1 - 2^{-2}) \exists T \in \mathcal{T}^{Z_\tau}_R(m+|\tau|,\sigma\tau,\vec{g}\vec{h}') \forall \rho \in \widehat{T} \Phi^*_{\vec{e}_1}(\sigma\tau\rho; x) \downarrow$.

If $m_S (\tau_j: j < k) \geq 2^{-2} 3$, then by (5) in the definition of $(\tau_j: j < k)$, we would have a contradiction by the inequality below
$$
    (P_{S'} \tau \geq 2^{-3} 3) \forall T \in \mathcal{T}^{Z_\tau}_R(m + |\tau|,\sigma\tau,\vec{g}\vec{h}') (P_{T} \rho \leq 2^{-1})  \Phi^*_{\vec{e}_1}(\sigma\tau\rho;x) \downarrow.
$$
Hence, $m_S (\tau_j: j < k) < 2^{-2} 3$.
\end{proof}

The above inequalities imply that
$$
    (P_{S} \tau > 2^{-4})((\sigma\tau, X_\tau \cap (c,\infty), \vec{g}) \text{ is $(\vec{e}_0,\vec{e}_1)$-large at $\vec{x}$ with witness } (n+4+|\tau|,d+4)).
$$
So we establish \eqref{eq:lowrb-large-ext1}, while \eqref{eq:lowrb-large-ext2} follows easily from \eqref{eq:lowrb-large-ext1} and Lemma \ref{lem:fgt}.

Finally, we prove the effectiveness. It is $\emptyset''$-recursive to find $\bar{c}$. As $p$ is low, it is $\emptyset''$-decidable whether $X_\tau$ is infinite. As $X_\tau \leq_T \vec{g} \oplus X$, $X_\tau$ is low. After we pick out the infinite $X_\tau$'s, it is uniformly recursive to calculate the lowness indices of $\vec{g} \oplus X_\tau$'s. As it is $\Pi^{\vec{g} \oplus X_\tau \cap (c,\infty)}_2$-decidable whether $(\sigma\tau, X_\tau \cap (c,\infty), \vec{g})$ is $(\vec{e}_0,\vec{e}_1)$-large at $\vec{x}$ with witness $(n+4+|\tau|,d+4)$, it is $\emptyset''$-recursive to find desired $\tau$ and $c$.
\end{proof}

With the above lemma, we can make some progress for $\Pi^0_2$ commitments.

\begin{lemma}\label{lem:rb-dj-totality}
If a condition $p = (\sigma,X,\vec{g})$ is low and $(\vec{e}_0,\vec{e}_1)$-large at $\vec{x}$, then for every $x$ there exists a low $q = (\tau,Y,\vec{g}) \leq p$ such that $q$ is $(\vec{e}_0,\vec{e}_1)$-large at $\vec{x}$ and $\Phi^*_{\vec{e}_1}(\tau;x) \downarrow$.

Moreover, $\tau$ and a lowness index of $\vec{g} \oplus Y$ can be obtained from $(\sigma,x,\vec{e}_0,\vec{e}_1,\vec{x})$ and a lowness index of $\vec{g} \oplus X$, in a uniformly $\emptyset''$-recursive way.
\end{lemma}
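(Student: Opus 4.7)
The plan is to combine the convergence guarantee from (L2) with the largeness-preservation of Lemma \ref{lem:lowrb-large-ext} by working with a single fast-growing $f\vec{g}$-rainbow tree of sufficient growth rate. Let $(n, d)$ be a largeness witness for $p$ with $n > d$. First I build, uniformly from a lowness index of $\vec{g} \oplus X$, a tree $S_0 \in \mathcal{T}^X_R(n+6, \sigma, f\vec{g})$; this is routine using Lemma \ref{lem:fgt-rb} and the fact that $f\vec{g} \in \mathcal{A}^*_{\sigma,X}$.

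I then apply the reformulation (L2') with $\vec{h} = \<f\>$, $l = 6$, $m = n+6$, and the given $x$. Lemma \ref{lem:fgt-rb} applied to $S_0$ yields, for every $y > \max \bar{S_0}$, that $(P_{S_0} \tau < 2^{-(n+6)})(\sigma\tau\<y\>$ is not an $f\vec{g}$-rainbow$)$, which (using $n \geq d$) supplies the hypothesis of (L2') after choosing any $c > \max \bar{S_0}$ and setting $Y = X \cap (c,\infty)$. The conclusion delivers a set $P \subseteq \widehat{S_0}$ with $m_{S_0} P > 1 - 2^{-5}$ and trees $T_\tau \in \mathcal{T}^Y_R(n+6+|\tau|, \sigma\tau, f\vec{g})$ for $\tau \in P$, each of whose leaves $\rho$ satisfies $\Phi^*_{\vec{e}_1}(\sigma\tau\rho; x) \downarrow$. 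Setting $T_\tau = \emptyset$ for $\tau \in \widehat{S_0} - P$ and gluing via (T2), I form
$$
    S' = S_0 \cup \bigcup_{\tau \in P}\{\tau\rho : \rho \in T_\tau\} \in \mathcal{T}^X_R(n+6, \sigma, f\vec{g}),
$$
so the set $Q$ of leaves $\xi \in \widehat{S'}$ with $\Phi^*_{\vec{e}_1}(\sigma\xi; x) \downarrow$ satisfies $m_{S'} Q \geq m_{S_0} P > 1 - 2^{-5}$.

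Now I apply Lemma \ref{lem:lowrb-large-ext}, specifically inequality \eqref{eq:lowrb-large-ext2} with $l = 6$, to the tree $S'$. This produces some $c'$ and a set of leaves $\xi \in \widehat{S'}$ of $S'$-measure $> 1 - 2^{-2}$ for which $(\sigma\xi, X_\xi \cap (c',\infty), \vec{g})$ is low and $(\vec{e}_0,\vec{e}_1)$-large at $\vec{x}$. Since $(1 - 2^{-5}) + (1 - 2^{-2}) > 1$, some leaf $\xi$ lies in both $Q$ and this set, and the resulting $q = (\sigma\xi, X_\xi \cap (c',\infty), \vec{g}) \leq p$ is the desired condition, with $\Phi^*_{\vec{e}_1}(\sigma\xi;x)\downarrow$ automatic from $\xi \in Q$.

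The uniform $\emptyset''$-effectiveness combines: the $\emptyset'$-recursive construction of $S_0$ from the given lowness index; the $\emptyset''$-decidability of membership in $P$ and selection of each $T_\tau$ (these are $\Sigma^{0,\vec{g}\oplus X}_1$ existentials over finite trees with $f \leq_T \emptyset'$ built in); and the effectiveness clause of Lemma \ref{lem:lowrb-large-ext} for producing $\xi$, $c'$ and a lowness index of $\vec{g} \oplus (X_\xi \cap (c',\infty))$. The main delicate point is calibrating $l$ so that the convergence probability from (L2') and the largeness probability from \eqref{eq:lowrb-large-ext2} add to more than $1$ while $S'$ remains in the growth class that Lemma \ref{lem:lowrb-large-ext} can consume; the choice $l = 6$ (together with $m = n+6$) gives the cleanest balance.
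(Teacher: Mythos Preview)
Your argument is correct, but it is more elaborate than necessary. The paper exploits a shortcut you missed: it applies (L2') with the \emph{trivial} tree $S = \{\emptyset\}$ (so $\widehat{S} = \{\emptyset\}$ and the probability quantifier is vacuous), $m = n+4$, $c = \max\sigma + 1$, and $\vec{h} = \langle f\rangle$. The hypothesis of (L2') then reduces to ``$\sigma\langle y\rangle$ is an $f\vec{g}$-rainbow for all $y\in X$'', which is immediate from $f\vec{g}\in\mathcal{A}^*_{\sigma,X}$, and the conclusion yields directly a single tree $T\in\mathcal{T}^X_R(n+4,\sigma,f\vec{g})$ \emph{all} of whose leaves $\tau$ satisfy $\Phi^*_{\vec{e}_1}(\sigma\tau;x)\downarrow$. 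One then invokes Lemma~\ref{lem:lowrb-large-ext} in its basic form \eqref{eq:lowrb-large-ext1} on $T$ to pick any leaf in the measure-$>2^{-4}$ set of large extensions; since every leaf already gives convergence, no intersection argument is needed.

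Your route---building a nontrivial $S_0$, applying (L2') to it, gluing the resulting $T_\tau$'s via (T2), and then intersecting the convergence set $Q$ with the largeness set from \eqref{eq:lowrb-large-ext2}---recovers the same conclusion at the cost of an extra tree construction, a gluing step, and a probability-intersection calibration (your choice $l=6$). The paper's trivial-tree trick buys a cleaner proof with smaller constants ($n+4$ instead of $n+6$), and the effectiveness is correspondingly more transparent. Your approach has no independent advantage here, though the gluing-and-intersecting pattern you use is exactly what is genuinely needed in the harder Lemmata~\ref{lem:rb-dj-pass} and~\ref{lem:rb-dj-fail}, so it is good practice.
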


\begin{proof}
Let $(n, d)$ be a largeness witness of $(\sigma,X,\vec{g})$. Apply (L2') to $S = \emptyset$, $m = n + 4$, $c = \max \sigma + 1$ and $f$, we get $T \in \mathcal{T}^X_R(n+4, \sigma, f\vec{g})$ such that
$$
    \forall \tau \in \widehat{T} \Phi^*_{\vec{e}_1}(\sigma\tau; x) \downarrow.
$$

By Lemma \ref{lem:lowrb-large-ext}, there exist $\tau \in \widehat{T}$ and $c'$ such that $\Phi^*_{\vec{e}_1}(\sigma\tau; x) \downarrow$ and $(\sigma\tau, Y, \vec{g}) \leq (\sigma, X, \vec{g})$ is low and $(\vec{e}_0,\vec{e}_1)$-large at $\vec{x}$, where
$$
    Y = \{y \in X \cap (c',\infty): \sigma\tau\<y\> \text{ is a $\vec{g}$-rainbow}\}.
$$
So we have a desired condition.

The effectiveness follows from that of Lemma \ref{lem:lowrb-large-ext}.
\end{proof}

Now, we are to decide, for a new $e$, whether we can force a $\Sigma^0_2$ sentence ($\Phi_e(G)$ is partial), or we should commit for a $\Pi^0_2$ sentence ($\Phi_e(G)$ is total).

Let $(\sigma,X,\vec{g})$ be $(\vec{e}_0,\vec{e}_1)$-large at $\vec{x}$ with witness $(n,d)$. We say that \emph{$(\sigma,X,\vec{g})$ passes the $e$-test at $y$}, if there exist $S \in \mathcal{T}^X_R(n+6,\sigma,f\vec{g})$, $m$, $c$ and $\vec{h} \in \mathcal{A}^*_{\sigma,X \cap (c,\infty)}$ such that $m \geq n + 6$, $\max \bar{S} < c$,
$$
    \forall z \in X \cap (c,\infty) (P_{S} \tau <2^{-d-4})(\sigma\tau \<z\> \text{ is not a $\vec{g}\vec{h}$-rainbow})
$$
and
$$
    (P_{S} \tau \geq 2^{-1}) \forall T \in \mathcal{T}^{X \cap (c,\infty)}_R(m+|\tau|,\sigma\tau,\vec{g}\vec{h}) (P_{T} \rho >2^{-1}) \Phi_{\vec{e}_0e}(\sigma\tau\rho; \vec{x}\<y\>) \uparrow.
$$
The quadruple $(S, m, c, \vec{h})$ is called a \emph{witness}. Note that it is uniformly $(\vec{g} \oplus X)''$-decidable whether $(\sigma,X,\vec{g})$ passes the $e$-test at $y$, if the largeness of $(\sigma,X,\vec{g})$ is given.

\begin{lemma}\label{lem:rb-dj-pass}
Let $(\sigma,X,\vec{g})$ be low and $(\vec{e}_0,\vec{e}_1)$-large at $\vec{x}$. If $(\sigma,X,\vec{g})$ passes the $e$-th test at $y$, then there exists $(\tau,Y,\vec{h}) \leq (\sigma,X,\vec{g})$ which is low and $(\vec{e}_0 \<e\>,\vec{e}_1)$-large at $\vec{x}\<y\>$.

Moreover, $\tau$ and a lowness index of $\vec{h} \oplus Y$ can be obtained from $(\sigma,\vec{e}_0,\vec{e}_1,\vec{x},e,y)$ and a low index of $\vec{g} \oplus X$, in a uniformly $\emptyset''$-recursive manner.
\end{lemma}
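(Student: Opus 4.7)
The plan is to combine the $e$-test witness with the machinery of Lemma \ref{lem:lowrb-large-ext}. Let $(n,d)$ be a largeness witness for $p=(\sigma,X,\vec{g})$, and let $(S,m,c,\vec{h})$ witness that $p$ passes the $e$-test at $y$. Write $S^{*}\subseteq\widehat{S}$ for the set of $\tau\in\widehat{S}$ on which the final clause in the definition of the $e$-test holds, so by hypothesis $m_{S}S^{*}\geq 2^{-1}$. For $\tau\in\widehat{S}$, set $X_{\tau}=\{z\in X\cap(c,\infty):\sigma\tau\<z\>$ is a rainbow for $\vec{g}\vec{h}\}$. The aim is to find $\tau\in S^{*}$ and $c'\geq c$ so that $Y=X_{\tau}\cap(c',\infty)$ is infinite and $q=(\sigma\tau,Y,\vec{g}\vec{h})$ is a low condition that is $(\vec{e}_{0}\<e\>,\vec{e}_{1})$-large at $\vec{x}\<y\>$.

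First I would argue that, after replacing $c$ by a slightly larger stability threshold $\bar c$, the set $X_{\tau}$ is infinite and $f\vec{g}\vec{h}\in\mathcal{A}^{*}_{\sigma\tau,X_{\tau}}$ for all but a small measure of $\tau\in\widehat{S}$. This is exactly the opening step in the proof of Lemma \ref{lem:lowrb-large-ext}: apply Lemma \ref{lem:fgt-rb} to $f\vec{g}\vec{h}$ on $S$ and use the stability of $f$ to uniformize the cut-off. Granted this trimming, condition (L1) for $q$ at the indices $(\vec{e}_{0}\<e\>,\vec{x}\<y\>)$ is immediate when $\tau\in S^{*}$: for any $T\in\mathcal{T}^{Y}_{R}(m+|\tau|,\sigma\tau,\vec{g}\vec{h})\subseteq\mathcal{T}^{X\cap(c,\infty)}_{R}(m+|\tau|,\sigma\tau,\vec{g}\vec{h})$, the $e$-test clause gives $(P_{T}\rho>2^{-1})\Phi_{\vec{e}_{0}e}(\sigma\tau\rho;\vec{x}\<y\>)\uparrow$.

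The substantial work is to verify (L2) for $q$ at the unchanged totality indices $(\vec{e}_{1},\vec{x})$. I would run the maximality construction of Lemma \ref{lem:lowrb-large-ext} verbatim, but over the coloring list $\vec{g}\vec{h}$ in place of $\vec{g}$: enumerate $(\tau_{j},S_{j},m_{j},c_{j},x_{j},\vec{h}'_{j})$ certifying the failure of (L2) for $(\sigma\tau_{j},X_{\tau_{j}}\cap(c_{j},\infty),\vec{g}\vec{h})$, glue the $S_{j}$'s onto $S$, and splice the $\vec{h}'_{j}$'s into a single sequence $\vec{h}'\in\mathcal{A}^{*}_{\sigma,X}$ by the same construction as in Lemma \ref{lem:lowrb-large-ext}. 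The additional twist is that each $h\in\vec{h}$ must itself be reshaped: define $h^{*}(x,y)=f(x,y)$ for $x\leq c$ and, for $x>c$, set $h^{*}(x,y)=\<w,y\>$ with $w=\min\{u\in(c,\infty):h(u,y)=h(x,y)\}$. One checks that $h^{*}\in\mathcal{A}^{*}_{\sigma,X}$ and that $\vec{g}\vec{h}$-rainbows in $\sigma\cup(X\cap(c,\infty))$ coincide with $\vec{g}\vec{h}^{*}$-rainbows. Feeding $\vec{h}'\vec{h}^{*}$ into (L2') for $p$ then yields the required measure bound $m_{S}\{\tau_{j}:j<k\}<3\cdot 2^{-2}$, exactly as in Lemma \ref{lem:lowrb-large-ext}.

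Applying the analog of \eqref{eq:lowrb-large-ext2} with $l=6$, the set of $\tau\in\widehat{S}$ for which $q$ satisfies (L2) at $(\vec{e}_{1},\vec{x})$ with a suitable $c'$ has $m_{S}$-measure greater than $1-2^{-2}=3/4$; intersecting with $S^{*}$ of measure at least $2^{-1}$ produces a common $\tau$. The resulting $q=(\sigma\tau,X_{\tau}\cap(c',\infty),\vec{g}\vec{h})$ is low and satisfies both (L1) for $(\vec{e}_{0}\<e\>,\vec{x}\<y\>)$ and (L2) for $(\vec{e}_{1},\vec{x})$, as required. The lowness and the uniform $\emptyset''$-recursive production of $\tau$ and a lowness index for $\vec{h}\oplus Y$ follow from the effectiveness of Lemma \ref{lem:lowrb-large-ext} together with the observation that the entire $e$-test witness $(S,m,c,\vec{h})$ is $(\vec{g}\oplus X)''$-computable once a largeness witness of $p$ is fixed. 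The principal obstacle is the splicing of $\vec{h}$: the $e$-test produces $\vec{h}$ only in $\mathcal{A}^{*}_{\sigma,X\cap(c,\infty)}$, while (L2) of $p$ demands coloring lists from the narrower class $\mathcal{A}^{*}_{\sigma,X}$, so the normalization step above requires a careful case analysis on the minimum witnesses defining normality to ensure $h^{*}\in\mathcal{A}$ and $\sigma\<x\>$ remains a rainbow for $h^{*}$ for every $x\in X$.
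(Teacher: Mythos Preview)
Your plan is correct in spirit but takes a more laborious route than the paper.  The paper does not re-run the maximality construction of Lemma~\ref{lem:lowrb-large-ext} over the enlarged list $\vec g\vec h$, and in particular it never needs the splicing $h\mapsto h^{*}$ that you flag as the principal obstacle.  Instead, it applies Lemma~\ref{lem:lowrb-large-ext} \emph{as a black box} to the original list $\vec g$: with $X_\tau=\{z\in X:\sigma\tau\<z\>\text{ is a }\vec g\text{-rainbow}\}$, it obtains $c'$ and a large-measure set $P_1\subseteq\widehat S$ on which $(\sigma\tau,X_\tau\cap(c',\infty),\vec g)$ is $(\vec e_0,\vec e_1)$-large.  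Intersecting $P_1$ with the $e$-test set $S^{*}$ and with the set $P_0$ of $\tau$ admitting infinitely many $f\vec g\vec h$-rainbow extensions yields a single $\tau$; one then sets $Y=\{z\in X_\tau\cap(c',\infty):\sigma\tau\<z\>\text{ is an }f\vec g\vec h\text{-rainbow}\}$ and $q=(\sigma\tau,Y,\vec g\vec h)$.  Condition~(L1) for $q$ follows exactly as you say.  For~(L2), the paper observes that any hypothesis $(S',c'',\vec h')$ for $q$ can be fed directly into~(L2) of the intermediate condition $(\sigma\tau,X_\tau\cap(c',\infty),\vec g)$ with the augmented list $\vec h\vec h'$, using the equality
\[
\mathcal{T}^{X_\tau\cap(c'',\infty)}_R(m'+|\rho|,\sigma\tau\rho,\vec g\vec h\vec h')=\mathcal{T}^{Y\cap(c'',\infty)}_R(m'+|\rho|,\sigma\tau\rho,\vec g\vec h\vec h').
\]
So the $\vec h$ is absorbed as part of the \emph{test} colorings in~(L2) of an already-large condition, rather than as part of the condition itself.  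Your approach would also go through, but the paper's saves the reshaping of $\vec h$ entirely and keeps Lemma~\ref{lem:lowrb-large-ext} modular.
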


\begin{proof}
Let $(S,m,c,\vec{h})$ witness that $(\sigma,X,\vec{g})$ passes the $e$-th test at $y$. By Low Basis Theorem, we may assume that $\vec{g} \oplus \vec{h} \oplus X$ is low.

By Lemma \ref{lem:fgt-rb}, $m_S P_0 > 1 - 2^{-d-3}$ for
$$
    P_0 = \{\tau \in \widehat{S}: \sigma\tau\<z\> \text{ is an $f\vec{g}\vec{h}$-rainbow for infinitely many } z \in X\}.
$$
For each $\tau \in \widehat{S}$, let $X_\tau = \{z \in X: \sigma\tau\<z\> \text{ is a rainbow for } \vec{g}\}$. By Lemma \ref{lem:lowrb-large-ext}, there exists $c'$ such that $m_S P_1 > 1 - 2^{-2}$ for
$$
    P_1 = \{\tau \in \widehat{S}: (\sigma\tau, X_\tau \cap (c',\infty), \vec{g}) \text{ is $(\vec{e}_0,\vec{e}_1)$-large at } \vec{x}\}.
$$
So, there exists $\tau \in P_0 \cap P_1$ with
\begin{equation}\label{eq:lowrb-pass-test}
    \forall T \in \mathcal{T}^{X \cap (c,\infty)}_R(m+|\tau|,\sigma\tau,\vec{g}\vec{h})(P_{T} \rho >2^{-1}) \Phi_{\vec{e}_0\<e\>}(\sigma\tau\rho; \vec{x}\<y\>) \uparrow.
\end{equation}

Let $(n',d')$ be a largeness witness of $(\sigma\tau,X_\tau \cap (c',\infty),\vec{g})$. We may assume that $n' \geq m + |\tau|$ and $c' \geq c$.  Let
$$
    Y = \{z \in X_\tau \cap (c',\infty): \sigma\tau\<z\> \text{ is a rainbow for } f\vec{g}\vec{h}\}.
$$
Then $Y \leq_T \vec{g}\vec{h} \oplus X$.

We prove that $(\sigma\tau,Y,\vec{g}\vec{h})$ is $(\vec{e}_0 \<e\>,\vec{e}_1)$-large at $\vec{x}\<y\>$ with witness $(n', d')$. By the definition of $Y$ and $f \in \mathcal{A}_{\sigma,X}$,
$$
    \mathcal{T}^{Y}_R(n',\sigma\tau,\vec{g}\vec{h}) = \mathcal{T}^{X_\tau \cap (c',\infty)}_R(n',\sigma\tau,\vec{g}\vec{h}) \subseteq \mathcal{T}^{X \cap (c,\infty)}_R(m + |\tau|,\sigma\tau,\vec{g}\vec{h}).
$$
Thus, $(\sigma\tau,Y,\vec{g}\vec{h})$ satisfies (L1) by \eqref{eq:lowrb-pass-test}. To show that $(\sigma\tau,Y,\vec{g}\vec{h})$  satisfies (L2), let $S' \in \mathcal{T}^Y_R(n',\sigma\tau,f\vec{g}\vec{h})$, $c'' > \max \bar{S}'$ and $\vec{h}' \in \mathcal{A}^*_{\sigma\tau,Y}$ be such that
$$
    \forall z \in Y \cap (c'',\infty) (P_{S'} \rho <2^{-d'})(\sigma\tau\rho\<z\> \text{ is not a $\vec{g}\vec{h}\vec{h}'$-rainbow}).
$$
By $(\vec{e}_0,\vec{e}_1)$-largeness of $(\sigma\tau,X_\tau \cap (c',\infty),\vec{g})$ and (L2), for all $m' \geq n'$ and $z$
$$
    (P_{S'} \rho >2^{-1}) \exists T \in \mathcal{T}^{X_\tau \cap (c'',\infty)}_R(m'+|\rho|,\sigma\tau\rho,\vec{g}\vec{h}\vec{h}') (P_{T} \zeta > 2^{-1}) \Phi^*_{\vec{e}_1}(\sigma\tau\rho\zeta;z) \downarrow.
$$
As $\mathcal{T}^{X_\tau \cap (c'',\infty)}_R(m'+|\rho|,\sigma\tau\rho,\vec{g}\vec{h}\vec{h}') = \mathcal{T}^{Y \cap (c'',\infty)}_R(m'+|\rho|,\sigma\tau\rho,\vec{g}\vec{h}\vec{h}')$, $(\sigma\tau,Y,\vec{g}\vec{h})$ satisfies (L2) as well.
\end{proof}

\begin{lemma}\label{lem:rb-dj-fail}
Let $(\sigma,X,\vec{g})$ be low and $(\vec{e}_0,\vec{e}_1)$-large at $\vec{x}$. If $(\sigma,X,\vec{g})$ passes the $e$-th test at \emph{no} $y$, then $(\sigma,X,\vec{g})$ is $(\vec{e}_0,\vec{e}_1 \<e\>)$-large at $\vec{x}$.
\end{lemma}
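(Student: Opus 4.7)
The plan is to exhibit an augmented largeness witness, say $(n+k_0, d+k_1)$ with $k_0, k_1$ on the order of $6$--$7$ and $4$--$5$ respectively, for $(\vec{e}_0, \vec{e}_1\<e\>)$-largeness of $(\sigma,X,\vec{g})$ at $\vec{x}$. Condition (L1) is immediate by (T1), since it concerns only $\vec{e}_0$, so I will concentrate on verifying (L2) for $\vec{e}_1\<e\>$.

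Given hypothesis data $(S, m, c, \vec{h})$ for the new (L2), with $Y = X \cap (c, \infty)$, I plan to combine three ingredients. First, the original (L2') applied to $(\sigma,X,\vec{g})$ yields, for $\tau$ in a subset of $\widehat{S}$ of $m_S$-measure close to $1$, trees $T^1_\tau \in \mathcal{T}^Y_R(m+|\tau|, \sigma\tau, \vec{g}\vec{h})$ on every leaf of which $\Phi^*_{\vec{e}_1}(\sigma\tau\rho; x) \downarrow$. Second, because the strict slack built into $k_1$ makes $(S, m, c, \vec{h})$ fulfil the first prerequisites of a test-witness for the original $(n,d)$, the failure of the $e$-test at $x$ delivers, for $\tau$ in a subset of $\widehat{S}$ of measure $> 2^{-1}$, a tree $T^e_\tau \in \mathcal{T}^Y_R(m+|\tau|, \sigma\tau, \vec{g}\vec{h})$ with $(P_{T^e_\tau} \rho \geq 2^{-1})\,\Phi_{\vec{e}_0 e}(\sigma\tau\rho; \vec{x}\<x\>) \downarrow$. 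Third, gluing each $T^e_\tau$ with the $T^1_{\tau\rho}$'s (supplied by reapplying (L2') to the glued tree $S \cup \bigcup_\tau T^e_\tau$) via (T2) produces a candidate $T_\tau \in \mathcal{T}^Y_R(m+|\tau|, \sigma\tau, \vec{g}\vec{h})$ on whose $T^1$-extensions $\Phi^*_{\vec{e}_1}$ converges by monotonicity of oracle computation.

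To convert $\Phi_{\vec{e}_0 e}\downarrow$ into the target $\Phi_e\downarrow$, I would invoke (L1') on the fully-glued tree $V^* = S \cup \bigcup_\tau \{\tau\xi : \xi \in T_\tau\}$ with a generous parameter $l$, obtaining a global upper bound $m_{V^*}\{\xi : \Phi_{\vec{e}_0}(\sigma\xi;\vec{x})\downarrow\} \leq 2^{-l}$, which a pigeonhole argument transfers into per-$\tau$ control of the same event on $T_\tau$. Combining the resulting $\Phi_{\vec{e}_0}\uparrow$ with the monotonically inherited $\Phi^*_{\vec{e}_1}\downarrow$ and the test-failure $\Phi_{\vec{e}_0 e}\downarrow$ isolates $\Phi_e\downarrow$ and so delivers $\Phi^*_{\vec{e}_1\<e\>}\downarrow$ on the required $m_{T_\tau}$-majority of leaves, for $\tau$ in an $m_S$-majority subset of $\widehat{S}$.

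The main obstacle will be the probability bookkeeping in the final combination: test-failure supplies only a non-strict inner bound ($\geq 2^{-1}$), while (L2) for the new largeness demands a strict $> 2^{-1}$. I expect to recover strictness by exploiting the strict $> 2^{-1}$ outer bound produced by the failure at $x$, together with the $1 - 2^{-l}$ slack from (L1') and the full-measure output of (L2'), after tuning $k_0, k_1$ generously and amplifying via Lemma \ref{lem:fgt} whenever needed. A secondary technical point is keeping the glued trees in the correct $\mathcal{T}^X_R$-classes, which may require replacing $\vec{h}$ with $f\vec{h}$ and paying for the added $f$-rainbow density via the stability of $f$ and Lemma \ref{lem:fgt-rb}, in the spirit of the manoeuvre used in the proof of Lemma \ref{lem:lowrb-large-ext}.
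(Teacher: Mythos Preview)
Your proposal identifies the three correct ingredients --- the original (L2') for $\Phi^*_{\vec{e}_1}$-convergence, failure of the $e$-test for $\Phi_{\vec{e}_0 e}$-convergence, and (L1') to cancel $\Phi_{\vec{e}_0}$ --- and combines them in a sensible order. But the way you invoke test failure leaves a real gap, and it is precisely the strictness obstacle you flagged.

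Applying test failure \emph{directly} to the given $S$ (with suitably padded $\vec{h}$) yields only $m_S Q > 2^{-1}$ for the set $Q$ of $\tau$'s admitting a good $T^e_\tau$, and this bound can be arbitrarily close to $2^{-1}$. Your subsequent pigeonhole transfer of the global (L1') bound to per-$\tau$ control necessarily removes a further set of $\tau$'s of positive measure (on the order of $2^{-l}$ for whatever $l$ your fixed $k_0$ affords). Since the largeness witness $(n+k_0,d+k_1)$ must be chosen \emph{before} $S,m,c,\vec{h}$ are given, you cannot pick $l$ adaptively to beat the possibly tiny slack in $m_S Q > 2^{-1}$. So the outer $>2^{-1}$ in (L2) is lost. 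The same erosion blocks a direct attack on (L2'') via your $V^*$: only a $>2^{-1}$-fraction of $\widehat{V^*}$ lies above $Q$, well short of the $>3/4$ that (L2'') demands. The slack in (L1') and (L2') does not help here, because it sits on the wrong side of the ledger: it bounds what you \emph{lose}, not what you \emph{start with} from the test.

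The paper's fix is to use test failure \emph{contrapositively}, and this is the missing idea. After first applying (L2') to build a glued tree $S_1 \in \mathcal{T}^X_R(n+10,\sigma,f\vec{g})$ with $\Phi^*_{\vec{e}_1}\!\downarrow$ on a $>1-2^{-8}$ fraction of its leaves, one argues: if the set $P_3 \subseteq \widehat{S_1}$ of $\tau$'s \emph{lacking} any tree $T'_\tau$ with $\Phi_{\vec{e}_0 e}\!\downarrow$ on all leaves had $m_{S_1} P_3 > 2^{-3}$, then Lemma~\ref{lem:fgt} would extract from $S_1$ a subtree $S' \in \mathcal{T}^X_R(n+6,\sigma,f\vec{g})$ with $\widehat{S'} \subseteq P_3$, and $(S',m+1,c,\vec{h})$ would witness that $(\sigma,X,\vec{g})$ \emph{passes} the $e$-test at $x$ --- contradiction. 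This yields good measure $>1-2^{-3}$ rather than merely $>2^{-1}$, which is ample room to absorb the (L1') loss on the twice-glued tree $S_2$ and land above $3/4$, establishing (L2''). Targeting (L2'') rather than (L2) also eliminates the per-$\tau$ Markov step entirely. The constants that make this work are $(n+10,d+10)$.
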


\begin{proof}
Let $(n,d)$ witness the $(\vec{e}_0,\vec{e}_1)$-largeness of $(\sigma,X,\vec{g})$. We prove that $(\sigma,X,\vec{g})$ is $(\vec{e}_0,\vec{e}_1 \<e\>)$-large at $\vec{x}$ with witness $(n+10,d+10)$.

Obviously, $(\sigma,X,\vec{g})$ satisfies (L1) for $(\vec{e}_0,\vec{e}_1 \<e\>)$ and $\vec{x}$.

Below, we prove that $(\sigma,X,\vec{g})$ satisfies (L2'') for $(\vec{e}_0,\vec{e}_1\<e\>)$. Fix $S \in \mathcal{T}^X_R(n+10,\sigma,f\vec{g})$, $c$ and $\vec{h} \in \mathcal{A}^*_{\sigma,X \cap (c,\infty)}$ such that $c > \max \bar{S}$ and
$$
    \forall y \in Y (P_{S} \tau <2^{-d-10}) (\sigma\tau\<y\> \text{ is not a $\vec{g}\vec{h}$-rainbow})
$$
where $Y = X \cap (c,\infty)$. Let $m \geq n+10$ and $x$ be arbitrary.

As $f \in \mathcal{A}_{\sigma,Y}$ and $n > d$,
$$
    \forall y \in Y (P_{S} \tau <2^{-d-9}) (\sigma\tau\<y\> \text{ is not an $f\vec{g}\vec{h}$-rainbow}).
$$
By the $(\vec{e}_0,\vec{e}_1)$-largeness of $(\sigma,X,\vec{g})$ and (L2'), there exist $P_1 \subseteq \widehat{S}$ and $T_\tau \in \mathcal{T}^Y_R(m+|\tau|,\sigma\tau,f\vec{g}\vec{h})$ for each $\tau \in P_1$ such that $m_S P_1 > 1 - 2^{-8}$ and $\Phi^*_{\vec{e}_1}(\sigma\tau\rho; x) \downarrow$ for all $\rho \in \widehat{T_\tau}$. Let
$$
    S_1 = \{\xi: \xi \in S \text{ or } \exists \tau \in P_1, \rho \in T_\tau(\xi = \tau\rho)\}.
$$
It follows that $(P_{S_1} \tau > 1 - 2^{-8}) \Phi^*_{\vec{e}_1}(\sigma\tau; x) \downarrow$, $S_1 \in \mathcal{T}^X_R(n+10,\sigma,f\vec{g})$ and
$$
    y \in Y \cap (\max \bar{S}_1,\infty) \to (P_{S_1} \tau < 2^{-d-8}) (\sigma\tau\<y\> \text{ is not a $\vec{g}\vec{h}$-rainbow}).
$$

The claim below explains why we need $n + 10$ instead of any lesser number.

\begin{claim}
There exist $P_2 \subseteq \widehat{S_1}$ and $T_\tau' \in \mathcal{T}^X_R(m+|\tau|,\sigma\tau,\vec{g}\vec{h})$ for each $\tau \in P_2$, such that $m_{S_1} P_2 > 1 - 2^{-3}$ and $\Phi_{\vec{e}_0\<e\>}(\sigma\tau\rho; \vec{x}\<x\>) \downarrow$ for all $\rho \in \widehat{T_\tau'}$.
\end{claim}

\begin{proof}
Let
$$
    P_3 = \{\tau \in \widehat{S_1}: \forall T \in \mathcal{T}^X_R(m+|\tau|,\sigma\tau,\vec{g}\vec{h}) \exists \rho \in \widehat{T} (\Phi_{\vec{e}_0\<e\>}(\sigma\tau\rho; \vec{x}\<x\>) \uparrow)\}
$$
By Lemma \ref{lem:fgt},
$$
    P_3 \subseteq \{\tau \in \widehat{S_1}: \forall T \in \mathcal{T}^X_R(m+1+|\tau|,\sigma\tau,\vec{g}\vec{h}) (P_{T} \rho >2^{-1}) \Phi_{\vec{e}_0\<e\>}(\sigma\tau\rho; \vec{x}\<x\>) \uparrow\}
$$
Suppose that $m_{S_1} P_3 > 2^{-3}$. Then by Lemma \ref{lem:fgt} again, there exists $S' \in \mathcal{T}^X_R(n+6,\sigma,f\vec{g})$ such that $\widehat{S'} \subseteq P_3$, $m_{S_1} \widehat{S'} > 2^{-4}$ and
$$
    y \in Y \cap (\max \bar{S}_1,\infty) \to (P_{S_1} \tau < 2^{-d-4}) (\sigma\tau\<y\> \text{ is not a $\vec{g}\vec{h}$-rainbow}).
$$
But this is a contradiction with that $(\sigma,X,\vec{g})$ fails the $e$-test at $x$.
\end{proof}

Fix $P_2$ and $(T'_\tau: \tau \in P_2)$ as in the above claim and let
$$
    S_2 = \{\xi: \xi \in S_1 \text{ or } \exists \tau \in P_2, \rho \in T_\tau'(\xi = \tau\rho)\}.
$$
Hence,
$$
    (P_{S_2} \tau > 1 - 2^{-4} 3) (\Phi_{\vec{e}_0\<e\>}(\sigma\tau; \vec{x}\<x\>) \downarrow \text{ and } \Phi^*_{\vec{e}_1}(\sigma\tau; x) \downarrow).
$$
But, by the $(\vec{e}_0,\vec{e}_1)$-largeness of $(\sigma,X,\vec{g})$ and (L1'),
$$
    (P_{S_2} \tau > 1 - 2^{-4}) \Phi_{\vec{e}_0}(\sigma\tau; \vec{x}) \uparrow.
$$
It follows that
$$
    (P_{S_2} \tau > 2^{-2} 3) \Phi^*_{\vec{e}_1 \<e\>}(\sigma\tau; x) \downarrow.
$$
So, (L2'') holds and $(\sigma,X,\vec{g})$ is $(\vec{e}_0,\vec{e}_1 \<e\>)$-large.
\end{proof}

Finally, we can build a desired rainbow.

\begin{proof}[Proof of Lemma \ref{lem:rb-double-jump}]
By the above lemmata, we can inductively define a $\emptyset''$-recursive sequence
$$
    ((\sigma_n, X_n, \vec{g}_n), (\vec{e}_{n,0},\vec{e}_{n,1}), \vec{x}_n: n \in \omega)
$$
such that
\begin{enumerate}
    \item $(\sigma_0, X_0, \vec{g}_0) = (\emptyset, \omega, \emptyset)$ and $\vec{e}_{0,0} = \vec{e}_{0,1} = \vec{x}_0 = \emptyset$,
    \item $(\sigma_n, X_n, \vec{g}_n)$ is a low condition which is $(\vec{e}_{n,0},\vec{e}_{n,1})$-large at $\vec{x}_n$,
    \item $(\sigma_n, X_n, \vec{g}_n) \geq (\sigma_{n+1}, X_{n+1}, \vec{g}_{n+1})$,
    \item if $\vec{e}_{n,1} \neq \emptyset$ then $\Phi^*_{\vec{e}_{n,1}}(\sigma_{n+1}; n) \downarrow$,
    \item either $(\vec{e}_{n+1,0},\vec{e}_{n+1,1}) = (\vec{e}_{n,0},\vec{e}_{n,1}\<n\>)$, or $(\vec{e}_{n+1,0},\vec{e}_{n+1,1}) = (\vec{e}_{n,0}\<n\>,\vec{e}_{n,1})$ and $\vec{x}_{n+1} = \vec{x}_n \<y\>$ for some $y$.
\end{enumerate}
As usual, $G = \bigcup_n \sigma_n$ is an infinite rainbow for $f$. Moreover,
$$
    \Phi_n(G) \text{ is total } \Leftrightarrow n \in \vec{e}_{n+1,1}.
$$
Hence $G'' \leq_T \emptyset''$.
\end{proof}

\begin{remark}\label{rmk:rb-dj}
We can see the importance of the stability of $f$ from the proofs of Lemmata \ref{lem:lowrb-large-ext} and \ref{lem:rb-dj-pass}. With stability, when we extend a condition $(\sigma, X, \vec{g})$ to some $(\sigma\tau, Y, \vec{g}\vec{h})$, we can just require that $\vec{g}\vec{h} \in \mathcal{A}^*_{\sigma\tau,Y}$, given that $\tau$ is carefully chosen so that $f \in \mathcal{A}_{\sigma\tau, Y}$. And the resulting $Y$ is quite predictable. So, we can control the complexity of a new condition by just picking $\vec{h}$ of low complexity. Without stability, we would have needed more work to guarantee that $f \in \mathcal{A}_{\sigma\tau,Y}$ and the resulting $Y$ would have been unpredictable.

Thus, the stability of $f$ makes it possible to formulate largeness at low complexity, as we are allowed to use $\mathcal{T}^{X \cap (c,\infty)}_R(m + |\tau|, \sigma\tau, \vec{g}\vec{h})$ in \eqref{eq:lowrb-L2-2}, instead of $\mathcal{T}^Y$ for some unpredictable $Y \in [X]^\omega$.
\end{remark}

\section{Rainbows for Colorings of Triples}\label{s:RRT32}

At last, we are ready to prove metamathematical results for $\RRT^3_2$.

With Theorem \ref{thm:rb-nPA}, we can get non-PA rainbows for \emph{recursive} $2$-bounded colorings of triples.

\begin{theorem}\label{thm:3rb-nPA}
If a set $X \in [\omega]^\omega$ and $2$-bounded $f: [\omega]^3 \to \omega$ are such that $X \oplus f$ is of non-PA degree, then there exists an $f$-rainbow $Y \in [\omega]^\omega$ such that $X \oplus Y$ is of non-PA degree.
\end{theorem}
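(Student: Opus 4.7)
Plan:

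My strategy is to reduce 2-bounded triple colorings to 2-bounded pair colorings via a cohesive set, then invoke Theorem~\ref{thm:rb-nPA}. First, relativize Theorem~\ref{thm:non-PA-Coh} to the oracle $X \oplus f$ and apply it to an $(X \oplus f)$-recursive sequence $\vec{R}$ of subsets of $\omega$ that encodes the collision structure of $f$; in particular $\vec{R}$ should include the ``match sets'' $R_{(a,b),(a',b')} = \{c : f(a,b,c) = f(a',b',c)\}$ for distinct pairs $(a,b), (a',b') \in [\omega]^2$, and the ``repetition sets'' $R^{\mathrm{rep}}_{a,b} = \{c : \exists c' < c,\ f(a,b,c) = f(a,b,c')\}$. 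This yields an $\vec{R}$-cohesive set $C$ with $X \oplus f \oplus C$ of non-PA degree, on which both the ``same third coordinate'' and ``same first two coordinates'' collision patterns are stabilized (each such set is either $\subseteq^* C$ or almost disjoint from $C$).

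Second, use $C$ to define an $(X \oplus f \oplus C)$-recursive 2-bounded pair coloring $g : [\omega]^2 \to \omega$ whose infinite rainbows yield $f$-rainbows after a small $(X \oplus f \oplus C)$-recursive extraction. Apply the relativization of Theorem~\ref{thm:rb-nPA} over the oracle $X \oplus f \oplus C$ to obtain an infinite $g$-rainbow $Z$ with $X \oplus f \oplus C \oplus Z$ of non-PA degree. From $Z$ and the cohesive data of $C$, extract an infinite $f$-rainbow $Y \leq_T X \oplus f \oplus C \oplus Z$; since non-PA-ness is downward closed in the Turing degrees, $X \oplus Y$ is of non-PA degree, as required.

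The main obstacle is step 2: designing the pair coloring $g$ correctly. A naive choice such as $g(a,b) = f(a, b, c^{*}_{a,b})$ for some canonical $c^{*}_{a,b} \in C$ is 2-bounded (since $f$ is) but only controls collisions on ``canonical'' triples, not arbitrary triples in $[Y]^3$. A concrete example exhibiting the subtlety: if $f(0,1,c) = f(0,2,c)$ for every $c > 2$, then any greedy attempt to include $\{0,1,2\}$ in the rainbow is doomed---no matter how the rest of $Y$ is chosen, some triple $(0,1,c)$ with $c \in Y$ collides with $(0,2,c)$. This rules out the purely elementary ``pick $y_{n+1}$ avoiding a finite bad set'' construction. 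Cohesiveness is precisely the tool that lets us read off such global obstructions from local information: the case $C \subseteq^{*} R_{(a,b),(a',b')}$ signals that the pairs $(a,b)$ and $(a',b')$ are ``incompatible'' along $C$, and $g$ should be designed so that any $g$-rainbow avoids using both such pairs. Formalizing this---while simultaneously keeping $g$ 2-bounded so that Theorem~\ref{thm:rb-nPA} applies, ensuring $g$ is recursive in $X \oplus f \oplus C$, and verifying that the final extraction step preserves the non-PA degree---is the heart of the proof and the step I expect to be most technical.
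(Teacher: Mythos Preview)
Your overall strategy---cohesive set to stabilize the collision structure, then reduce to a pair coloring and invoke Theorem~\ref{thm:rb-nPA}---is exactly the paper's approach, but you are making the reduction harder than it needs to be in two places.

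First, the paper begins by applying Lemma~\ref{lem:tail-rainbow-random} to assume without loss of generality that $\omega$ is a $1$-tail $f$-rainbow, i.e., $f(a,b,c)\neq f(a',b',c')$ whenever $c\neq c'$. This single step kills all ``different third coordinate'' collisions and makes your repetition sets $R^{\mathrm{rep}}_{a,b}$ unnecessary; only the match sets $R_{(a,b),(a',b')}$ need to be put into $\vec{R}$.

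Second, and more importantly, you do not need the induced pair coloring to be $(X\oplus f\oplus C)$-recursive. Theorem~\ref{thm:rb-nPA} places no computability hypothesis on the coloring; its relativization says that for any oracle $A$ and any $2$-bounded $g$, there is an infinite $g$-rainbow $R$ with $A\oplus R$ of non-PA degree. The paper therefore simply takes the limit coloring
\[
\bar f(x,y)=\lim_{z\in C}\min\{\langle u,v\rangle: f(u,v,z)=f(x,y,z)\},
\]
which exists by cohesiveness and is automatically $2$-bounded. It is only $(X\oplus f\oplus C)'$-recursive, but that is irrelevant. Applying the relativized Theorem~\ref{thm:rb-nPA} with oracle $X\oplus f\oplus C$ yields an infinite $\bar f$-rainbow $R\subseteq C$ with $X\oplus f\oplus C\oplus R$ non-PA, and then (using the $1$-tail assumption) a greedy thinning of $R$ recursive in $X\oplus f\oplus C\oplus R$ gives the desired $f$-rainbow $Y$. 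So the ``heart of the proof'' you anticipate---engineering a recursive $2$-bounded $g$---simply does not arise.
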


\begin{proof}
Fix $X$ and $f$ as in the presumption. By Lemma \ref{lem:tail-rainbow-random}, we may assume that $\omega$ is a 1-tail $f$-rainbow. Apply Theorem \ref{thm:non-PA-Coh} to get $C \in [\omega]^\omega$ such that $X \oplus f \oplus C \not\gg \emptyset$ and for every $(x,y) \in [C]^2$ the following limit exists
$$
    \bar{f}(x,y) = \lim_{z \in C} \min\{\<u,v\>: f(u,v,z) = f(x,y,z)\}.
$$
Clearly, $\bar{f}$ is $2$-bounded. By Theorem \ref{thm:rb-nPA}, there exists $R \in [C]^\omega$ such that $X \oplus C \oplus R \not\gg \emptyset$ and $R$ is a rainbow for $\bar{f}$. It is easy to get an $f$-rainbow $Y \in [R]^\omega$ which is recursive in $X \oplus C \oplus R$.
\end{proof}

Theorem \ref{thm:3rb-nPA} allows us to build an $\omega$-model $(\omega, \mathcal{S}) \models \RCA + \RRT^3_2$ such that $\mathcal{S}$ contains only sets of non-PA degrees. The next theorem follows immediately.

\begin{theorem}\label{thm:RRT3-WKL}
$\RCA + \RRT^3_2 \not\vdash \WKL$.
\end{theorem}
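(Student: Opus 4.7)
The plan is to build a countable $\omega$-model $\mathcal{M} = (\omega, \mathcal{S})$ of $\RCA + \RRT^3_2$ in which every set in $\mathcal{S}$ has non-PA Turing degree. This would finish the proof: fix an infinite recursive binary tree $T \subseteq 2^{<\omega}$ whose paths are exactly the $\{0,1\}$-valued DNR functions, so $[T]$ consists entirely of sets of PA degree. Then $T \in \mathcal{S}$, but $\mathcal{S}$ contains no path of $T$, so $\mathcal{M} \models \RCA + \RRT^3_2 + \neg \WKL$.

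To construct $\mathcal{S}$, I would build an increasing sequence $X_0 \leq_T X_1 \leq_T \cdots$ of sets, each of non-PA degree, and set $\mathcal{S} = \{Y : Y \leq_T X_s \text{ for some } s \in \omega\}$. This $\mathcal{S}$ is automatically closed under Turing reducibility and join, so $\mathcal{M} \models \RCA$. Fix an enumeration $((n_s,e_s) : s \in \omega)$ of $\omega^2$ with $n_s \leq s$. Set $X_0 = \emptyset$. At stage $s+1$, given $X_s$ of non-PA degree, consider the potential coloring $f = \Phi_{e_s}(X_{n_s})$. If $f$ is not total or fails to be a $2$-bounded coloring of $[\omega]^3$, set $X_{s+1} = X_s$. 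Otherwise, since $X_s \oplus f \equiv_T X_s$ is of non-PA degree, Theorem \ref{thm:3rb-nPA} provides an $f$-rainbow $Y \in [\omega]^\omega$ with $X_s \oplus Y$ still of non-PA degree; set $X_{s+1} = X_s \oplus Y$.

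Every set in $\mathcal{S}$ is recursive in some $X_s$ of non-PA degree, so is itself of non-PA degree. To see $\mathcal{M} \models \RRT^3_2$, note that any $2$-bounded coloring $f : [\omega]^3 \to \omega$ belonging to $\mathcal{S}$ has the form $\Phi_e(X_n)$ for some pair $(n,e)$; at the stage $s+1$ where $(n_s,e_s) = (n,e)$ the construction adds an infinite $f$-rainbow to $\mathcal{S}$.

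There is no real obstacle beyond careful bookkeeping: the substantive content — adding a rainbow while preserving the non-PA property — is exactly Theorem \ref{thm:3rb-nPA}, and the remaining work is the standard $\omega$-model iteration used throughout reverse mathematics. The only point that deserves attention is the choice of tree witnessing $\neg \WKL$: one should either quote a fixed recursive tree of PA-completions or explicitly exhibit one (such as the tree of $\{0,1\}$-valued partial DNR functions extended by $0$), so that membership of $T$ in $\mathcal{S}$ is transparent while all paths are barred by the non-PA condition.
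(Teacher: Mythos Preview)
Your proposal is correct and takes essentially the same approach as the paper: iterate Theorem~\ref{thm:3rb-nPA} to build a Turing ideal $\mathcal{S}$ of non-PA sets closed under infinite rainbows for $2$-bounded colorings of triples, then observe that a recursive tree whose paths are all of PA degree witnesses the failure of $\WKL$ in $(\omega,\mathcal{S})$. The paper states only the first sentence of this and leaves the standard iteration implicit; you have spelled out the bookkeeping accurately.
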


With Theorem \ref{thm:rb-double-jump} and by a similar argument reducing colorings of triples to those of pairs, we can get $\low_3$ rainbows for recursive $2$-bounded coloring of triples.

\begin{theorem}\label{thm:3rb-low3}
For every $X \gg \emptyset''$ and a recursive $2$-bounded coloring $f$ of triples, there exists an infinite $f$-rainbow $R$ such that $R'' \leq_T X$. Hence, every recursive $2$-bounded coloring of triples admits an infinite $\low_3$ rainbow.
\end{theorem}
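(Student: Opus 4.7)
The plan is to follow the proof of Theorem \ref{thm:3rb-nPA} essentially verbatim, replacing Theorem \ref{thm:non-PA-Coh} by Theorem \ref{thm:coh-double-jump} and Theorem \ref{thm:rb-nPA} by Theorem \ref{thm:rb-double-jump}. By Lemma \ref{lem:tail-rainbow-random} I first pass to a $1$-tail $f$-rainbow and may thereafter assume that $\omega$ itself is one. Next, I form the recursive equality sequence $\vec{R} = (R_{(x_1,x_2),(y_1,y_2)})_{(x_1,x_2),(y_1,y_2) \in [\omega]^2}$ with $R_{(x_1,x_2),(y_1,y_2)} = \{z : f(x_1,x_2,z) = f(y_1,y_2,z)\}$, and apply Theorem \ref{thm:coh-double-jump} with $P = X$ to obtain an infinite $\vec{R}$-cohesive set $C$ with $C \oplus \emptyset'' \leq_T X$, so in particular $C'' \leq_T X$.

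Enumerate $C$ increasingly as $c_0 < c_1 < \cdots$ and define the $2$-bounded, $C'$-recursive pair coloring
\[
\bar{f}(a,b) = \lim_{z \in C} \min\{\langle u_1, u_2 \rangle : f(u_1, u_2, z) = f(c_a, c_b, z)\}.
\]
The limit exists because for each $k$ the set $\{z : \min\{\ldots\} \leq k\}$ is a finite union of members of $\vec{R}$ and is thus decided by $\vec{R}$-cohesiveness of $C$; the $2$-boundedness of $\bar{f}$ is inherited from that of $f$, since three equal $\bar{f}$-values would force three distinct triples in $C$ to share an $f$-color at some large $z \in C$. I then apply the relativization (to oracle $C$) of Theorem \ref{thm:rb-double-jump} to $\bar{f}$ to obtain an infinite $\bar{f}$-rainbow $R$ with $(C \oplus R)'' \leq_T X$. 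From $\{c_r : r \in R\}$ I greedily thin to an $f$-rainbow $Y$: at each stage pick the next element large enough that every new triple receives an $f$-color distinct from all previously fixed ones. This succeeds because $1$-tail rainbowness of $\omega$ separates triples with differing largest coordinates, while $\bar{f}$-injectivity on $R$ ensures that any two distinct pairs of indices from $R$ produce $f$-colors that differ at cofinitely many third coordinates in $C$. Then $Y \leq_T C \oplus R$, and hence $Y'' \leq_T (C \oplus R)'' \leq_T X$. For the $\low_3$ corollary, apply the above with an $X \gg \emptyset''$ chosen low over $\emptyset''$ via the relativized Low Basis Theorem, so that $X' \leq_T \emptyset^{(3)}$ and consequently $Y^{(3)} \leq_T X' \leq_T \emptyset^{(3)}$.

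The main technical hurdle I foresee is invoking the relativization of Theorem \ref{thm:rb-double-jump} with oracle $C$: its straightforward statement requires $X \gg C''$, which is strictly stronger than the $C'' \leq_T X$ supplied by Theorem \ref{thm:coh-double-jump}. I expect to work around this by enlarging $\vec{R}$ in the first step so that the reduced $\bar{f}$ is already stable in the sense of Lemma \ref{lem:rb-double-jump}, after which only the relativization of that lemma, which carries no PA hypothesis, is needed.
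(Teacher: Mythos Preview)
You have correctly identified the gap, but your proposed workaround is more fragile than you suggest, and the paper's actual fix is both simpler and different from what you outline.

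The difficulty is exactly where you locate it: after obtaining $C$ via Theorem~\ref{thm:coh-double-jump} you only know $C'' \leq_T X$, whereas the relativization of Theorem~\ref{thm:rb-double-jump} to oracle $C$ requires $X \gg C''$. Your suggestion to ``enlarge $\vec{R}$ so that $\bar f$ is already stable'' and then invoke only the relativized Lemma~\ref{lem:rb-double-jump} runs into a circularity: stability of $\bar f$ is a statement about limits along $C$, so the sets one would need to add to $\vec{R}$ to force it depend on $C$ itself. Unwinding this amounts to performing a second cohesivization over $C'$, which puts you right back at needing $X \gg C''$.

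The paper avoids the problem by observing that the sequence $\vec{R}$ you form from a \emph{recursive} $f$ is itself recursive, not merely $\emptyset'$-recursive. Hence Theorem~\ref{thm:coh-double-jump} is overkill for this step; the Jockusch--Stephan theorem already supplies an $\vec{R}$-cohesive $C$ with $C'' \leq_T \emptyset''$. Now $X \gg \emptyset'' \equiv_T C''$, so the relativized Theorem~\ref{thm:rb-double-jump} applies directly to the $C'$-recursive coloring $\bar f$ and yields $Y$ with $(C \oplus Y)'' \leq_T X$. The rest of your argument (thinning to an $f$-rainbow, and the $\low_3$ corollary via a low-over-$\emptyset''$ choice of $X$) is fine and matches the paper. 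In short: replace your invocation of Theorem~\ref{thm:coh-double-jump} by Jockusch--Stephan, and the gap closes with no need for your workaround.
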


\begin{proof}
Fix $X$ and $f$ as in the assumption. By Lemma \ref{lem:tail-rainbow-random}, we may assume that $\omega$ is $1$-tail $f$-rainbow. By a theorem of Jockusch and Stephan \cite{Jockusch.Stephan:1993.cohesive}, we get $C \in [\omega]^\omega$ such that $C'' \leq_T \emptyset''$ and the following limit is defined for all $(x,y) \in [C]^2$
$$
    \bar{f}(x,y) = \lim_{z \in C} \min\{\<u,v\>: f(u,v,z) = f(x,y,z)\}.
$$
Clearly, $\bar{f}$ is $2$-bounded and $C'$-recursive. By a relativization of Theorem \ref{thm:rb-double-jump}, there exists $Y \in [C]^\omega$ such that $Y$ is a rainbow for $\bar{f}$ and $(C \oplus Y)'' \leq_T X$. It is easy to see that $C \oplus Y$ computes an $f$-rainbow $R \in [Y]^\omega$.

By relativizing Low Basis Theorem, we can have $X' \leq_T \emptyset'''$ for the above $X$. So, the rainbow $R$ obtained above is $\low_3$.
\end{proof}

In \cite{Csima.Mileti:2009.rainbow}, Csima and Mileti prove that $\RCA + \RRT^2_2 \not\vdash \RRT^3_2$ and raise a question whether $\RCA + \RRT^n_2 \vdash \RRT^{n+1}_2$ for some $n$ (\cite[Question 5.16]{Csima.Mileti:2009.rainbow}). Now, we can partially answer this question.

\begin{theorem}\label{thm:RRT3-RRT4}
$\RCA + \RRT^3_2 \not\vdash \RRT^4_2$.
\end{theorem}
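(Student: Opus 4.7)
The plan is to build a countable $\omega$-model $\mathcal{M} = (\omega, \mathcal{S})$ of $\RCA + \RRT^3_2$ in which every set is $\low_3$. Combined with the theorem of Csima and Mileti \cite{Csima.Mileti:2009.rainbow} producing, for each $n$, a recursive $2$-bounded coloring of $n$-tuples with no infinite $\Sigma^0_n$ rainbow, and the fact that every $\low_3$ set is $\Delta^0_4 \subseteq \Sigma^0_4$, such an $\mathcal{M}$ will automatically fail $\RRT^4_2$: the Csima--Mileti coloring $g:[\omega]^4 \to \omega$ lies in $\mathcal{S}$ (it is recursive), but no rainbow for $g$ is $\low_3$, hence none lies in $\mathcal{S}$.

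The key step is a $\low_3$-preserving closure lemma. I would first record the routine relativization of Theorem \ref{thm:3rb-low3}: for every $Z \in 2^\omega$, every $X \gg Z''$, and every $Z$-recursive $2$-bounded coloring $f:[\omega]^3 \to \omega$, there is an infinite $f$-rainbow $R$ with $(Z \oplus R)'' \leq_T X$ (the proof of Theorem \ref{thm:3rb-low3} relativizes verbatim, with $\emptyset$ replaced by $Z$). Applying the Low Basis Theorem (Theorem \ref{thm:low-basis}) relative to $Z''$ to the $\Pi^{Z''}_1$-class of functions that are PA over $Z''$, I can choose such an $X$ with $X' \leq_T Z'''$. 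Hence, if $Z$ is $\low_3$ then $(Z \oplus R)''' \leq_T X' \leq_T Z''' \leq_T \emptyset^{(3)}$, so $Z \oplus R$ is again $\low_3$.

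The model is then assembled by standard bookkeeping. Enumerate $\omega^2$ as $((n_s, e_s): s \in \omega)$ with each pair appearing infinitely often, set $A_0 = \emptyset$, and at stage $s + 1$: if $\Phi^{A_{n_s}}_{e_s}$ is total and codes a $2$-bounded $f_s:[\omega]^3 \to \omega$, apply the closure lemma with $Z = A_s$ to obtain an infinite $f_s$-rainbow $R_s$ with $A_{s+1} = A_s \oplus R_s$ still $\low_3$; otherwise let $A_{s+1} = A_s$. Define $\mathcal{S} = \{A : \exists s\, (A \leq_T A_s)\}$. Then $\mathcal{S}$ is closed under $\leq_T$ and $\oplus$, so $\mathcal{M} \models \RCA$; every $2$-bounded coloring of triples in $\mathcal{S}$ is of the form $\Phi^{A_n}_e$ for some $n, e$ and is handled at some stage $s$ with $(n_s, e_s) = (n, e)$ and $A_{n_s} \supseteq A_n$, producing a rainbow in $\mathcal{S}$; and every $A \in \mathcal{S}$ is $\low_3$. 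Thus $\mathcal{M} \models \RCA + \RRT^3_2$ but $\mathcal{M} \not\models \RRT^4_2$, giving the theorem.

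The only genuinely non-trivial point is checking that the relativization of Theorem \ref{thm:3rb-low3} can be made to preserve $\low_3$-ness uniformly, i.e., that the $X$ produced by the Low Basis Theorem inside its proof can always be taken low over $Z''$. This is routine once one traces through the roles played by $\emptyset$ and $\emptyset''$ in \S\ref{ss:2rb-low2} and \S\ref{s:coh}, and after this is in place the rest of the argument is pure bookkeeping.
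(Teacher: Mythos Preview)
Your proposal is correct and follows essentially the same approach as the paper: relativize Theorem~\ref{thm:3rb-low3} to obtain a $\low_3$-preservation step, iterate it with standard bookkeeping to build an $\omega$-model of $\RCA + \RRT^3_2$ consisting entirely of $\low_3$ sets, and then invoke the Csima--Mileti coloring of quadruples with no $\Sigma^0_4$ (hence no $\low_3$) infinite rainbow. The paper compresses all of this into three sentences, while you have spelled out the relativization and the model-construction details explicitly; the content is the same.
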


\begin{proof}
By relativizing Theorem \ref{thm:3rb-low3}, we can build an $\omega$-model $(\omega, \mathcal{S}) \models \RCA + \RRT^3_2$ such that every $X \in \mathcal{S}$ is $\low_3$ and thus $\Delta^0_4$. As \cite[Theorem 2.5]{Csima.Mileti:2009.rainbow} gives us a recursive $2$-bounded $g: [\omega]^4 \to \omega$ which admits no $\Delta^0_4$ infinite rainbows, $(\omega, \mathcal{S}) \not\models \RRT^4_2$. So, $\RCA + \RRT^3_2 \not\vdash \RRT^4_2$.
\end{proof}

\section{Questions}

Theorem \ref{thm:coh-double-jump} naturally leads to the following question:

\begin{question}
Does every $\emptyset^{(n)}$-recursive sequence $(R_n: n \in \omega)$ admit a $\low_{n+2}$ cohesive set? A stronger version is: fix $P \gg \emptyset^{(n+1)}$, does every $\emptyset^{(n)}$-recursive sequence admit a cohesive $C$ with $C^{(n+1)} \leq_T P$?
\end{question}

The above question in turn leads to a natural weakening:

\begin{question}
Does every $\emptyset^{(n)}$-recursive partition $\omega = X_0 \sqcup X_1$ admit a $\low_{n+1}$ $H \in [X_i]^\omega$ for some $i < 2$? Fix $P \gg \emptyset^{(n)}$, can we find $i < 2$ and $H \in [X_i]^\omega$ with $H^{(n)} \leq_T P$?
\end{question}

Also, we can raise a parallel question for rainbows.

\begin{question}
Does every $\emptyset^{(n)}$-recursive $2$-bounded coloring of pairs admit a $\low_{n+2}$ rainbow in $[\omega]^\omega$? Or an infinite rainbow with its $(n+1)$-st jump recursive in a fixed $P \gg \emptyset^{(n+1)}$?
\end{question}

Note that, for all questions above, we have affirmative answers for $n = 0, 1$.

Finally, we formulate a recursion theoretic counterpart of \cite[Question 5.16]{Csima.Mileti:2009.rainbow}.

\begin{question}
Does every recursive $2$-bounded coloring of $[\omega]^n$ admit a $\low_n$ rainbow in $[\omega]^\omega$?
\end{question}

For the last question, now we have affirmative answers for $n = 1, 2, 3$.

\bibliographystyle{plain}

\end{document}